\documentclass{article}
\usepackage{amsmath, amssymb, amsthm, bbm}
\usepackage{enumitem}
\usepackage[T1]{fontenc}
\usepackage{tikz}
\usepackage[UKenglish]{isodate}
\usepackage[font=small,labelfont=it]{caption}
\usepackage{subcaption}
\usepackage{hyperref}
\usepackage{multirow}
\usepackage{fancyvrb}


\usepackage{natbib}
\usepackage{enumitem}
\usepackage{tikz}
\usepackage{graphicx}
\usepackage{xcolor}
\usepackage[UKenglish]{isodate}
\usepackage{graphicx}
\usepackage[font=small,labelfont=it]{caption}
\usepackage{subcaption}
\usepackage{hyperref}
\usepackage{enumitem}
\usepackage{multirow}
\usepackage{fancyvrb}

\renewcommand{\phi}{\varphi}
\newcommand{\eps}{\varepsilon}

\newcommand{\om}{\omega}
\newcommand{\Om}{\Omega}
\newcommand{\lam}{\lambda}
\newcommand{\Ll}{\left}
\newcommand{\Rr}{\right}
\newcommand*\diff{\mathop{}\!\mathrm{d}}

\newcommand{\paren}[1]{\left( \left. #1 \right. \right)}

\newcommand{\rhoseason}{m_{\rho}}
\newcommand{\rhoseasondiscrete}[1]{m_{\rho,#1}}
\newcommand{\croch}[1]{\left[ \left. #1 \right. \right]}


\newcommand{\N}{\mathbb{N}}
\renewcommand{\P}{\mathbb{P}}

\newcommand{\R}{\mathbb{R}}


\newcommand{\Ff}{\mathcal{F}}
\newcommand{\Hh}{\mathcal{H}}

\newcommand{\Kk}{\mathcal{K}}

\newcommand{\Nn}{\mathcal{N}}
\newcommand{\Pp}{\mathcal{P}}

\newcommand{\Xx}{\mathcal{X}}

\newcommand{\rhowinter}{\rho}

\newtheorem*{theorem*}{Theorem}
\newtheorem{theorem}{Theorem}[section]
\newtheorem{atheorem}{Theorem}
\newtheorem{proposition}[theorem]{Proposition}
\newtheorem{corollary}[theorem]{Corollary}
\newtheorem{remark}[theorem]{Remark}
\newtheorem{lemma}[theorem]{Lemma}

\newtheorem{definition}[theorem]{Definition}

\title{The Yoccoz-Birkeland livestock population model
coupled with random price dynamics}

\author{Riccardo~Ceccon\thanks{Scuola Normale Superiore, Pisa, Italy. Email address: riccardo.ceccon@sns.it} 
\and Giulia~Livieri\thanks{Scuola Normale Superiore, Pisa, Italy. Email address: giulia.livieri@sns.it} \and Stefano~Marmi\thanks{Scuola Normale Superiore, Pisa, Italy. Email address: stefano.marmi@sns.it}}

\date{\today}
\begin{document}
\maketitle
\begin{abstract}
We study a random version of the population-market model proposed by Arlot, Marmi and Papini in \cite{Arl19}. The latter model is based on the Yoccoz–Birkeland integral equation and describes a time evolution of livestock commodities prices which exhibits endogenous deterministic stochastic behaviour. We introduce a stochastic component inspired from the Black-Scholes market model into the price equation and we prove the existence of a random attractor and of a random invariant measure.  We  compute numerically the fractal dimension and the entropy of the random attractor and we show its convergence to the deterministic one as the volatility in the market equation tends to zero. We also investigate in detail the dependence of the attractor on the choice of the time-discretization parameter. We implement several statistical distances to quantify the similarity between the attractors of the discretized systems and the original one. In particular, following a work by Cuturi \cite{CU13},  we use the Sinkhorn distance. This is a discrete and penalized version of the Optimal Transport Distance between two measures, given a transport cost matrix.
\end{abstract}
{\small
{\bf Keywords:} population dynamics, delays dynamical systems, strange
attractor, chaotic livestock commodities cycles, Sinkhorn distance, statistical distances\\
{\bf 2020 Mathematics Subject Classification:} Primary: 37D45; Secondary:
37M05; 37N40; 92D25; 34K60\\
}

\section{Introduction}
\label{sec:Introduction}
One of the most outstanding phenomena in ecology is given by the statistically cyclical variations of small Arctic rodents (see, e.g., \cite{Han93, And21}). The amplitude of these cycles varies widely and seemingly chaotically.	In 1998, Yoccoz and Birkeland (\cite{YB98}) proposed a model for the evolution of the population of \emph{Microtus epiroticus} (the sibling vole) on the Svalbard Islands in the Arctic Ocean. This species presents a high fertility rate that has a strong dependence on seasonal factors and on the population density. The peculiarity of this model is that it exhibits an \emph{endogenous} chaotic behaviour: it tries to explain the strong annual oscillations presented by the Microtus epiroticus population only by exploiting the biological characteristics of the species and its interaction with the environment.  Remarkably, these oscillations are determined neither from the lack of food nor from the presence of significant predators, the main one being seagulls.\\
\indent In \cite{Arl04}, the authors studied the Yoccoz-Birkeland model via a mathematical analysis and some simulation experiments. Their main are the following: the Yoccoz-Birkeland model is able to reproduce a complex dynamics with a high sensitivity to initial conditions, only by the combination of density-dependent fertility, the lag due to the maturation age and a periodic seasonality. \cite{NPV12}, instead, proved the existence of periodic points for the discrete version of the model and performed numerical simulations with a special emphasis to some values for the model parameters.\\
\indent 	Persistent approximately periodic fluctuations are also a feature of time-series of livestock commodities prices (\cite{Ros94}). Cobweb models show that non-linearities (\cite{CHI88, Hom94}) and simple expectation rules (\cite{Hom13}) may lead to chaotic deterministic price fluctuations. In \cite{Arl19}, the authors coupled the Yoccoz-Birkeland model with a \emph{deterministic} equation modelling the price dynamics of a livestock commodity market. The main idea of this model is the following. A cattle population is split at the birth into reproducing females and cattle for butchery. The splitting ``strategy'' is determined by very simple naive expectations of the breeder, namely by the spot price of the meat, whereas the logarithmic derivative of the price is driven by the unbalance between the demand and supply of the meat. As in \cite{Arl04}, the model for the population dynamics accounts explicitly for seasons (or artificial synchronization of births) and maturation lags.\\
\noindent The major outcomes of their study are the following. First, the model gives rise to a chaotic time evolution of price by simply connecting the percentage of reproducing females with the price equation. Second, they show global existence and uniqueness for initial value problems together with some useful estimates on the solutions. Third, they give simple sufficient conditions that ensure the existence of a global attractor containing at least a non-trivial periodic solution. Finally, they perform some numerical experiments in which they show that the attractor has sensitive dependence on initial conditions and non-integer dimension, and that the influence of the market on the model is a crucial factor for the dynamics.\\
\indent	In this paper, we extend the model of \cite{Arl19}: in particular, the price dynamics of the livestock commodity market is now described by a \emph{Stochastic Differential Equation (SDE, henceforth)}, in a Black-Scholes-like fashion (\cite{B900} and \cite{BS73}), instead of a deterministic one. The aim is that of proving also in the new framework the asymptotic results produced in \cite{Arl19}, in particular the existence of a global attractor. Thanks to the bounded dependence of the population function with respect to the price, we manage to prove that the population size is bounded and Lipschitz. Nonetheless, the presence of the stochastic component on the price dynamics compromises the usage of the theorem employed in \cite{Arl19} to prove the existence of a bounded attractor. Here, instead, we use the concepts of \emph{Random Dynamical System (RDS, henceforth)} and \emph{Random Attractor}, which are based on the so-called \emph{pull-back} approach (see, e.g., \cite{CR02} and \cite{CRFL92}), to prove the existence of a random attractor on the population component of the phase space. In addition, to prove a more powerful asymptotic result on the population-price dynamical system, we use the concept of \emph{Random (Invariant) Measure} and a version of the Prohorov Theorem for Random Measures (\cite{CR02}) to prove the existence of an invariant random measure. Moreover, we complement the numerical experiments in \cite{Arl19}: the aim is that of simplifying, i.e., reducing as far as possible the dimension of the phase space of the dynamical system obtained by discretizing the model proposed by \cite{Arl19} with the intention of preserving as much as possible the chaotic behaviour observed in the previous study. Toward this end, we allow also for a non-integer number of integration steps. We show graphically the effects of reducing the number of integration steps per year, which appears to be smoothing and simplifying the attractor of the dynamical system. Due to this simplification, we introduce a new set of parameters, such that the plot for a smaller number of integration steps per year changes the least possible. In addition, we employ the concept of Optimal Transport Distance and of Sinkhorn Distance, referring to \cite{CU13}. Thanks to these distances, we are able to formalize and compute explicitly the geometrical difference between the attractors. A plot of this distance, together with the entropy and the fractal dimension, as a function of the number of integration steps per year, is then shown.\\
\indent The paper is organized in the following way. In Section \ref{sec:yoccoz_model}, we recall the Yoccoz-Birkeland model coupled with deterministic price dynamics developed in \cite{Arl19}. Section \ref{sec:model_Yoccoz_Birkeland} describes the random model obtained by adding a diffusion term to the price dynamics and some preliminary estimates on the time evolution of the system are derived in Section \ref{sec:first_analysys}. The existence of a global random attractor is established in Section \ref{sec:existence}. Section \ref{sec:numerical} is devoted to the numerical study of the 
discretizaton of the deterministic model and the sensitivity of the attractor on the choice of 
integration step, whereas Section \ref{sec:raandom_num} contains the numerical study of the random attractor. The two appendices contain the fundamental notions on RDS needed and the application to the logistic map of the numerical methodologies used in Appendix \ref{app:logistic_attractors} to quantify the distance between attractors.  
\section{The Yoccoz-Birkeland model and the coupling with price dynamics}\label{sec:yoccoz_model}
This section summarises the model introduced in \cite{Arl19} for the time evolution of livestock commodities prices; it hinges on the Yoccoz-Birkeland integral equation \cite{YB98} (see Equation~\eqref{eq:model_1} below) and on the subsequent analysis in \cite{Arl04, NPV12}.\\
A cattle population is divided into two parts: the first one comprises females for reproduction whereas the second one the cattle for butchery (i.e., all the males plus some of the females). Then, the mechanism governing the time evolution of the livestock population and of the meat price is the following: 
\begin{enumerate}
\item[(1)] At the birth of some babies, part of the newborn females are put in the reproduction line, and the remaining newborn females are put in the butchery line together with all newborn males. The fraction $R$ of newborn females that will reproduce is chosen by the breeder, and is only determined by the price of meat at birth time; the breeder can choose either a short-term strategy or a long-term strategy. Below the dependence upon the price $P$ is denoted by $R(P)$.
\item[(2)] In the reproducing line, females between ages $A_0$ and $A_1$ have children. Their fertility can be affected by seasons, or because births are synchronized by the breeder (through a function $m_{\rho}(t)$). Reproducing females older than $A_1$ (hence, non fertile) are not taken into account anywhere in the model - in particular, they are not butchered.
\item[(3)] In the butchery line, cattle can be butchered between ages $\Omega_0$ and $\Omega_1$. So, only the (alive) butchery population between ages $\Omega_0$ and $\Omega_1$ can count as a ``supply'' for the market. 
\item[(4)] The price evolution is a simple function of the supply (which comes from the butchery line population dynamics) and the demand (which depends only on the price).
\end{enumerate}

\noindent Before describing the model, for the reader's convenience, we sum up in the next subsection the notation and the terminology which will be used throughout this paper. For the sake of clarity and possibility of comparison, we use the notation as in \cite{Arl19}. 
\subsection{Terminology and notations}\label{subsec:terminology_and_notation}
\begin{enumerate}[label=(N\arabic*)]
\item\label{itm:N1} $t$ is the time measured in years.
\item\label{itm:N2} $N_r(t)$ is the total population of \emph{mature females} that are in the \emph{r}eproducing line and can give birth to pups at time $t$.
\item\label{itm:N3} $N_b(t)$ is the total population of cattle that is \emph{suitable for butchery} at time $t$ (both males and non-reproducing females, old enough and in the butchery line).
\item\label{itm:N4} $R(P)$ is the fraction of \emph{newborn females} that are put in the reproducing line when the price of meat is $P$ when they are born.
\item\label{itm:N5} $A_0$ is the age from which \emph{females} can have children (i.e., the age of sexual maturity plus the length of the first gestation).
\item\label{itm:N6} $A_1$ is the maximal age at which \emph{females} can give birth to children (i.e., the age of sexual infertility plus the length of the last gestation).
\item\label{itm:N7} $\Omega_0$ is the minimal age at which the cattle (\emph{male or female}) can be butchered.
\item\label{itm:N8} $\Omega_1$ is the maximal age at which the cattle (\emph{male or female}) can be butchered. 
\item\label{itm:N9} $m(N)$ is the average annual \emph{female} (resp.~\emph{male}) fertility of each mature female when the total population is $N$, i.e., the average number of female (resp. male) babies per year for a single mature female. Typically it is a decreasing function; see Equation~(2.3) in \cite{Arl19} for a concrete example. We assume a sex ratio equal to $1/2$, i.e. the average number of male babies is equal to the average number of female babies. Hence, $m(N)$ is half of the average annual fertility.
\item\label{itm:N10} $m_{\rho}(t)$ is the 1-periodic step function (with $\int_0^1 m_{\rho}(t)\,dt=1$) that accounts for a possible modulation of fertility during each year (e.g., births synchronization or seasonal effects).
\item\label{itm:N11} $P(t)$ is the market price of meat at time $t$.
\item\label{itm:N12} $D(P)$ is the demand of the market (per time unit) when the price of meat is $P$ (typically a decreasing function of $P$).
\item\label{itm:N13} $S(t)$ is the supply to the market (per time unit) at time $t$ (typically proportional to $N_b(t)$).
\item\label{itm:N14} $\lambda$ is a ``temperature'' parameter of the meat market: 
higher values of $\lambda$ correspond to bigger price variations
in response to the same demand/supply imbalance.
\item\label{itm:N15} $F(D,S)$ is the function of demand and supply that rules the meat price dynamics.
\item\label{itm:N16} $B_f(t)$ is the density of newborn female cattle at time $t$ (i.e., $B_f(t)dt$ females are born between $t$ and $t+ dt$).
\item\label{itm:N17} $B_m(t)$ is the density of newborn male cattle at time $t$.
\item\label{itm:N18} $B_r(t)$ is the density of newborn (female) cattle that are put in the reproducing line at time $t$.
\item\label{itm:N19} $B_b(t)$ is the density of newborn cattle that are put in the butchery line at time $t$.
\end{enumerate}
\subsection{Deterministic population and price dynamics}\label{subsec:population_and_price_dynamics}
The above assumptions on the coupling between the cattle population evolution and the deterministic market model made in \cite{Arl19} (see \cite{Arl19}, Subsections~3.2, 3.3 for a detailed derivation) give rise to the following set of equations for the time evolution of the reproducing females population $N_r$, of the price $P$ and of the meat supply $S$:
\begin{align}
N_r(t)                  &=\int_{A_0}^{A_1}m_{\rho}(t-a) m(N_r(t-a)) N_r(t-a) R(P(t-a))\,da\label{eq:model_1}\\
\frac{P^{'}(t)}{P(t)}   &=\lambda F(D(P(t)), S(t))\label{eq:model_2}\\
S(t)                    &=\frac{1}{\Delta\Omega} \int_{\Omega_0}^{\Omega_1} m_{\rho}(t-a) m(N_r(t-a)) N_r(t-a)\left[2-R(P(t-a))\right]\,da\label{eq:model_3}
\end{align}
with $F(D, S) = \frac{(D-S)}{(D+S)}$, $m : [0,+\infty)\rightarrow [0,+\infty)$, $m_{\rho} : \mathbbm{R} \rightarrow [0,+\infty)$, $R:[0,+\infty)\rightarrow [0,1]$, $D:[0,+\infty)\rightarrow [0,+\infty)$, $A_1 > A_0 > 0$, $\Omega_1 > \Omega_0 > 0$, $\Delta \Omega = \Omega_1 - \Omega_0$ and $\lambda>0$.\\
\indent Notice that the integral evolution Equation~\eqref{eq:model_1}, which is a standalone equation in the Yoccoz-Birkeland model,  is coupled with the \emph{differential} equation in Equation~\eqref{eq:model_2} describing the price of a livestock commodity; the latter is driven by the unbalance between its demand and supply.\\
\indent Under reasonable assumptions on $m_{\rho}$, $m$, $R$ and $D$ (see \cite{Arl19}, beginning of Section~4, and Equations~(4.7)--(4.9)), the authors show that a unique solution to Equations~\eqref{eq:model_1}--\eqref{eq:model_3} exists, is globally defined and satisfies some (uniform with respect to the initial condition) estimates: $N_r$ and $S$ are globally bounded and the component $N_r$ is Lipschitz continuous on $[t_0,+\infty)$ for some $t_0$. Moreover, they prove the existence of a global attractor and of a non-trivial periodic solution (see \cite{Arl19}, Theorem A and Theorem B, respectively). Finally, via a numerical investigation, they show that the global attractor is indeed a strange attractor: it has a fractal dimension of $\approx 1.53$ and the Kolmogorov-Sinai entropy is positive. Moreover, they showed that the attractor is persistent but its chaotic behaviour depends also on the time evolution of the price in an essential way, a feature which was completely absent in the original Yoccoz-Birkeland model. In particular, they show that if the price dynamic is constant, then the resulting orbit becomes quasi-periodic. Figure~\ref{fig:yoccoz_arlot} displays an example of attractor for the model of \cite{Arl19}.
\begin{figure}[h]
\centering
\includegraphics[width=0.8\linewidth]{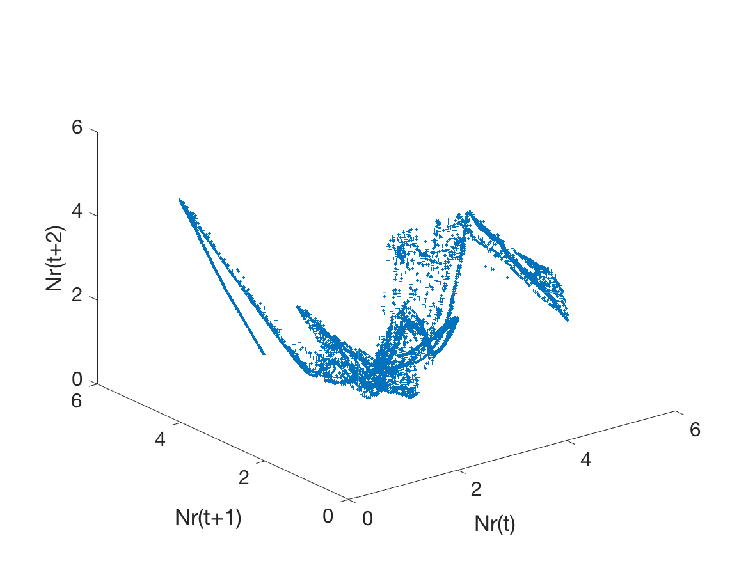}
\caption{Three dimensional plot of $(N_r(t), N_r(t+1), N_r(t+2))$ with $\rho=0.30,\,\gamma=8.25,\,A_0 = 0.18$ and $m_0 = 50$.}
\label{fig:yoccoz_arlot}
\end{figure}

\section{The Yoccoz-Birkeland model and the coupling with random price dynamics}\label{sec:model_Yoccoz_Birkeland}
The model we propose is obtained from Equations~\eqref{eq:model_1}--\eqref{eq:model_3} by adding a stochastic Brownian component with a constant volatility $\sigma>0$ to price dynamics (Equation~\eqref{eq:model_2}). The constancy of the volatility is consistent with two popular models used in the financial literature, i.e., the Bachelier and the Black and Scholes (see \cite{B900} and \cite{BS73}, respectively). 
Precisely, the price is described by the following \emph{Stochastic Differential Equation} (SDE):
\begin{equation}
     dP(t) = P(t) \lambda F(D(P(t)), S(t))\,dt + P(t) \sigma\,dW_t\label{eq:model_2_prime},
\end{equation}
or, equivalently, in its logarithmic form: 
\begin{equation}
    dQ(t) = \left(F(D(P(t)), S(t)) - \frac{\sigma^2}2\right)\,dt+ \sigma\,dW_t\label{eq:model_2_second}.\\
\end{equation}
Therefore, the model we are going to study is the following:
\begin{align}
N_r(t)                  &=\int_{A_0}^{A_1}m_{\rho}(t-a) m(N_r(t-a)) N_r(t-a) R(P(t-a))\,da\label{eq:model_1_prime}\\
dP(t)                   &= P(t) \lambda F(D(P(t)), S(t))\,dt + \sigma P(t)\,dW_t\label{eq:model_2_prime}\\
S(t)                    &=\frac{1}{\Delta\Omega} \int_{\Omega_0}^{\Omega_1} m_{\rho}(t-a) m(N_r(t-a)) N_r(t-a)\left[2-R(P(t-a))\right]\,da,\label{eq:model_3_prime}
\end{align}
where $(W_t)_{t \geq 0}$ is a standard Brownian motion.\\

\noindent In order to analyse the effect of the noise on the attractor of a deterministic dynamical system, we will use the \emph{pull-back approach} (see, e.g., \cite{CHEK11}). To this end, we introduce the two-sided Wiener measure before proceeding with the mathematical analysis of the model. Precisely, on some probability space, we take two independent copies of the Brownian motion $(W_t)_{t \geq 0}$, say $(W_t^{(i)})_{t \geq 0}$ with $i = 1, 2$, and we define the two-sided Brownian motion:
\begin{equation*}
    W_t = W_t^{(1)}\,\,\text{for}\,\,t \geq 0,\,\,W_t = W_{-t}^{(2)}\,\,\text{for}\,\,t \leq 0.
\end{equation*}
We call $\mathcal{P}$ its law on Borel sets of the space of real-valued continuous functions on $\mathbb{R}$ that are null at zero, i.e. the space $\mathcal{C}_0(\mathbb{R};\mathbb{R})$. This is the two-sided Wiener measure. Now, consider the canonical space $\Omega = \mathcal{C}_0(\mathbb{R};\mathbb{R})$ with Borel $\sigma$-field and two-sided Wiener measure $\mathcal{P}$. On $(\Omega, \mathcal{F}, \mathcal{P})$ we consider the canonical two-sided Brownian motion defined as  
\begin{equation*}
    W_t(\omega) = \omega(t)\quad\omega \in \mathcal{C}_0(\mathbb{R};\mathbb{R})
\end{equation*}
and we interpret the stochastic Equation \eqref{eq:model_2_prime} on the canonical space.\\
\indent In the next section, we prove some rigorous preliminary results on the Yoccoz-Birkeland model coupled with random price dynamics described by Equations~\eqref{eq:model_1_prime}--\eqref{eq:model_3_prime}. We will closely follow Section 4 of \cite{Arl19}, but at the same time account for the stochastic nature of the price dynamics.

\section{A first analysis of the model}\label{sec:first_analysys}
\noindent Following \cite{Arl19}, we impose the following assumptions on the seasonality $m_{\rho}$, the fertility $m$,  the fraction $R$ of newborn females put in the reproducing line and the demand function $D$:
\begin{enumerate}[label=(A\arabic*)]
\item\label{itm:A1} $m_{\rho}:\mathbb{R}\rightarrow\mathbb{R}$ is a non-negative, bounded, $1$-periodic function such that $\int_0^1m_{\rho}(t)\, dt = 1$ and we let $m_{\rho}(t)\le \mu_{\text{max}}$ and
\begin{equation*}
0<c_0\le\int_{A_0}^{A_1}m_{\rho}(t-a)\, da\le c_1 \qquad \forall t.
\end{equation*}
\item\label{itm:A2} $m:\left[0,+\infty\right)\to\mathbb{R}$ is a continuous function that satisfies
\begin{equation*}
\frac{m_0}{2} \min\{1, N^{-\gamma}\} \le m(N) \le m_0 \min\{1, N^{-\gamma}\} \qquad \forall N > 0
\end{equation*}
with $m_0>0$ and $\gamma\ge 1$.
\item\label{itm:A3} $R:\left[0,+\infty\right)\rightarrow\mathbb{R}$ is a continuous function such that $R_0\le R(P)\le R_1$ for all $P \geq 0$ and some constants $R_1,R_0>0$ with $R_1 \leq 1$.
\item\label{itm:A4} $D:\left[0,+\infty\right)\rightarrow\mathbb{R}$ is a strictly decreasing and locally Lipschitz continuous function such that $D(+\infty)=0$, and we set $D_{0}=D(0)$.
\end{enumerate}
\noindent We can now define the phase-space of the dynamical system and the notion of solution of Equations~\eqref{eq:model_1_prime}--\eqref{eq:model_3_prime}.\\
\indent We let $T_0 := \min\{A_0, \Omega_0\}$, $T_1 := \max\{A_1, \Omega_1\}$ and we set:
\begin{equation}\label{eq:defition_of_X}
    \mathcal{X}:=\mathcal{X}_1 \times \mathcal{X}_2:=C_{\text{int}}^{0}([-T_1,0];[0,+\infty)) \times C^{0}([-T_1,0];[0,+\infty)),
\end{equation}
where $C_{\text{int}}^{0}([-T_1,0];[0,+\infty))$ is the space of the positive-valued continuous functions on $[-T_1, 0]$ such that:
\begin{equation}\label{eq:condition_on_N_0}
    N_r(0) = \int_{A_0}^{A_1}   m_{\rho}(-a) m(N_r(-a)) N_r(-a) R(P(-a))\,da.
\end{equation}
The space $\mathcal{X}$ is a complete metric space with respect to the distance induced by the norm:
\begin{equation*}
    \|(N, P)\|_{\mathcal{X}}:=\|N\|_{\infty} + \|P\|_{\infty}:=\text{esssup}_{s \in [-T_1,0]} |N(s)| + \sup_{s \in [-T_1,0]} |P(s)|.
\end{equation*}

\begin{definition}
Let $(N_r^0, P^0) \in \mathcal{X}$, and $t_0, T \in \mathbb{R}$ with $t_{0}<T $. A solution of Equations \eqref{eq:model_1_prime}--\eqref{eq:model_3_prime} with initial data $(N_r^0, P^0)$ is a couple $(N_r,P): \Omega\times\left[t_0 - T_1, T\right) \rightarrow \mathbb{R}^2$ such that $N_r(\omega)|_{\left[t_0,T\right)}$ is continuous, $P(\omega)|_{\left[t_0,T\right)}$ is continuous, $N_r$ and $P$ satisfy Equations \eqref{eq:model_1_prime}--\eqref{eq:model_3_prime} for $t \in \left[t_0, T\right)$, while $N_r(\omega,t_0+a)=N_r^0(\omega,a)$ and $P(\omega,t_0+a)=P^0(\omega,a)$ for $a\in\left[-T_1,0\right)$.
\end{definition}

\noindent The assumptions \ref{itm:A1}-\ref{itm:A4} guarantee that the model has a unique globally defined solution:

\begin{proposition}[cfr.~\cite{Arl19}, Proposition 4.2]\label{prop:bounded_solutions}
Let $(N_r^0, P^0) \in \mathcal{X}$ and $t_0 \in \mathbb{R}$ be given.
Then there exists a unique solution pair $(N_r,P):\Omega\times\left[-T_1+t_0,+\infty\right)\to\mathbb{R}^2$ of Equations \eqref{eq:model_1_prime}--\eqref{eq:model_3_prime} with initial data $(N_r^0, P^0)$.
Moreover, $N_r, P$ are non-negative and $\mathcal{P}$-a.s. it holds that:
\[
\begin{aligned}
&0\le N_r(t) \le N_{\max}           \quad \forall t\ge t_0 \\
& \bigl\lvert N_r(t)-N_r(s) \bigr\rvert \le L_1 |t-s| \quad \forall t,s\ge t_0 \\
& 0 \le S(t) \le S_{\max}       \quad \forall t\ge t_0,
\end{aligned}
\]
where:
\begin{equation}\label{eq:constants_Nmax_L1S_max}
\begin{aligned}
& N_{\max} := m_0 R_1 c_1, \qquad L_1 := 2 m_0 R_1 \mu_{\text{max}} \\
& S_{\max} := m_0 \frac{2-R_0}{\Delta\Omega} \sup_{s\in[0,1]} \int_{\Omega_0}^{\Omega_1} m_{\rho}(s-a)\diff a.
\end{aligned}
\end{equation}
\end{proposition}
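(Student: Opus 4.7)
The plan is to adapt the deterministic proof of Arlot--Marmi--Papini (\cite{Arl19}, Proposition 4.2) to the stochastic setting by exploiting the fact that the noise appears only in the price equation, and the price enters the $N_r$ and $S$ equations only through the bounded quantity $R(P)$. Concretely, I would proceed in a ``step by step'' fashion on intervals of length $T_0 = \min\{A_0, \Omega_0\}$: on each such interval, the integrals defining $N_r$ and $S$ depend only on initial data or on values already constructed in the past, so they are pathwise determined; the SDE for $P$ can then be solved on the same interval as a one-dimensional SDE with a bounded, time-dependent drift and geometric multiplicative noise. Iterating this procedure extends the solution to $[t_0, +\infty)$.

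More precisely, first I would fix $\omega \in \Omega$ and observe that, for $t \in [t_0, t_0 + T_0)$, the delayed arguments $t-a$ with $a \in [A_0, A_1] \cup [\Omega_0, \Omega_1]$ all lie in $[t_0 - T_1, t_0]$, where $(N_r, P)$ is prescribed by the initial condition. Hence Equations~\eqref{eq:model_1_prime} and~\eqref{eq:model_3_prime} define $N_r$ and $S$ on $[t_0, t_0 + T_0)$ directly, and both are continuous in $t$ (via the change of variable $u = t-a$, they become integrals of $L^\infty$ functions on sliding intervals). Given this measurable, bounded function $t \mapsto S(t)$, the log-price $Q = \log P$ satisfies Equation~\eqref{eq:model_2_second} with a drift that is bounded (since $|F| \le 1$) and locally Lipschitz in $Q$, and with constant diffusion $\sigma$; standard SDE theory then produces a unique strong, continuous solution $Q$ on $[t_0, t_0 + T_0)$, and setting $P = e^Q$ gives a strictly positive $P$. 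Concatenating over successive intervals $[t_0 + k T_0, t_0 + (k+1) T_0)$ yields the unique global solution.

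The a priori bounds then follow from assumptions~\ref{itm:A1}--\ref{itm:A3}. Since $\gamma \ge 1$, one has $m(N) N \le m_0 \min\{N, N^{1-\gamma}\} \le m_0$ for every $N \ge 0$; combined with $R(P) \le R_1$ and $\int_{A_0}^{A_1} m_\rho(t-a)\,da \le c_1$, Equation~\eqref{eq:model_1_prime} immediately gives $N_r(t) \le m_0 R_1 c_1 = N_{\max}$. The bound on $S$ is analogous, using $2 - R(P) \le 2 - R_0$. For the Lipschitz estimate, I would use the substitution $u = t - a$ to rewrite
\[
N_r(t) = \int_{t - A_1}^{t - A_0} m_\rho(u)\, m(N_r(u))\, N_r(u)\, R(P(u))\,du,
\]
so that $|N_r(t) - N_r(s)|$ is controlled by the integral of an integrand bounded by $\mu_{\max} m_0 R_1$ over the symmetric difference of two intervals of equal length, which has Lebesgue measure at most $2|t-s|$; this yields $|N_r(t) - N_r(s)| \le 2 \mu_{\max} m_0 R_1 |t-s| = L_1 |t-s|$.

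The only genuinely new point with respect to \cite{Arl19} is ensuring that the SDE~\eqref{eq:model_2_prime} is well posed once $S$ has been constructed, and that the whole procedure produces a process measurable in $(\omega, t)$; I expect this to be the main technical obstacle, but it is handled cleanly by working on the canonical Wiener space introduced above and solving the SDE pathwise on each interval via the logarithmic transform, which both guarantees positivity of $P$ and reduces the problem to an SDE with bounded drift and additive noise, whose unique strong solution is adapted and almost surely continuous. All uniform bounds are deterministic consequences of the structural assumptions~\ref{itm:A1}--\ref{itm:A3}, and therefore hold $\mathcal{P}$-almost surely as claimed.
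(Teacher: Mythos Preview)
Your proposal is correct and follows essentially the same approach as the paper, which simply says the proof is ``a simple adaptation'' of \cite{Arl19} with the only new ingredient being the well-posedness of the SDE for $P$ on each interval $[t_0+kT_0,\,t_0+(k+1)T_0)$ via the stochastic Cauchy--Lipschitz theorem. In fact you supply considerably more detail than the paper does (the step-by-step construction, the explicit use of the log transform to obtain bounded drift with additive noise, and the derivation of the bounds from \ref{itm:A1}--\ref{itm:A3}), all of which is accurate.
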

\begin{proof}
It's a simple adaptation of the proof of the analogous statement given in \cite{Arl19}, the main 
difference being that we have to determine $P$ on $[t_0,t_0+T_0)$ once $N_r$ and $S$ are extended on $\left[t_0-T_1,t_0+T_0\right)$ in a unique and continuous way. This follows from the stochastic version of the Cauchy-Lipschitz (or Picard-Lindel\"of) theorem. 
\end{proof}

\begin{remark}\label{rem:discontinuity_at_zero}
Equation \eqref{eq:model_1_prime} prescribes the value $N_r(t_0)$ which may be different from $N_r^0(t_0^-)$. Therefore, the solution component $N_r$ may have a jump discontinuity at $t_0$ even if the initial condition $N_r^0$ is continuous; this despite the fact that the solution is going to be Lipschitz continuous on $[t_0, \infty)$. This justifies the additional condition we imposed on $N_r^0(0)$ (Equation~\eqref{eq:condition_on_N_0}) while defining the space $\mathcal{X}$. Should this condition not be imposed, we would have that the shifted solution has a discontinuity at one point. Notice also that if $N_r^0(t)>0$ for all $t\in[-T_1,0)$, then $N_r(t)$ is going to be positive for all $t\in[t_0,\infty).$
\end{remark}

\noindent The following proposition establishes lower bounds for $N_r$ and $S$ which, together with the estimates in Proposition \ref{prop:bounded_solutions}, will allow us to define for the deterministic model a compact subset $K$ of $\mathcal{X}$ to which all the solutions belong after a sufficiently big period of time. However, notice that differently from the upper bounds, the time of first entry for the lower bounds is not uniform in the initial data. This will be important when discussing the existence of a random attractor.

\begin{proposition}[see.~\cite{Arl19}, Proposition 4.4]\label{prop:uniform_persistence_for_N}
Let $(N_r, P)$ be the solution of Equations \eqref{eq:model_1_prime}--\eqref{eq:model_3_prime} with initial data $(N_r^0, P^0) \in \mathcal{X}$ at time $t_0$.
\begin{enumerate}
\item
If $N_r(t) \le N_{\max}$ for a.a. $t \in [\hat{t} - A_1, \hat{t}]$ for some $\hat{t}  \geq t_0$, then
\[
N_r(t) \ge \frac{m_0R_0c_0}{2} \min\left\{ \inf_{[\hat{t}-A_1,\hat{t}]} N_r, N_{\max}^{1-\gamma}\right\}
\quad \forall t \in \bigl[ \hat{t}, \hat{t}+A_0 \bigr] 
 ,
\]
In particular this inequality holds for all $t \ge t_0 + A_1$ by Proposition~\ref{prop:bounded_solutions}.
\item
If $m_0R_0c_0>2$ and $\inf_{[\hat{t}-A_1,\hat{t}]} N_r \ge N_{\max}^{1-\gamma}$ for some $\hat{t} \ge t_0 + A_1$, then
\[
N_r(t) \ge N_{\min} \quad \text{and} \quad S(t) \ge S_{\min} \qquad \forall t\ge \hat{t},
\]
where:
\begin{small}
\begin{equation}\label{eq:Nmin_Smin}
N_{\min} := \frac{m_0R_0c_0}{2} N_{\max}^{1-\gamma} \quad \text{and} \quad
S_{\min} := m_0 \frac{2-R_1}{2\Delta\Omega} N_{\max}^{1-\gamma} \inf_{s\in[0,1]} \int_{\Omega_0}^{\Omega_1}\rhoseason(s-a)\diff a.
\end{equation}
\end{small}
\item
If $m_0R_0c_0>2$ and $ N_{0}(a) >0 $ for almost all $ a \in [-A_{1},0] $, then there exists $t^*\ge t_0$ such that
$N_r(t) \ge N_{\min}$ and $S(t) \ge S_{\min}$ for all $t\ge t^*$.
\end{enumerate}
\end{proposition}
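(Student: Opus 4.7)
Both Part 1 and Part 2 are a direct pathwise transcription of the corresponding deterministic argument in \cite{Arl19}: the price enters \eqref{eq:model_1_prime} and \eqref{eq:model_3_prime} only through $R(P)$, which by \ref{itm:A3} is sandwiched between $R_0$ and $R_1$ independently of $\omega$, so all the bounds below hold pathwise and the SDE \eqref{eq:model_2_prime} plays no role. The analytical engine of Part 1 is the elementary inequality
\[ m(N)\,N \;\ge\; \tfrac{m_0}{2}\,\min\!\bigl\{N,\;N_{\max}^{1-\gamma}\bigr\} \qquad (0 \le N \le N_{\max}), \]
which follows from \ref{itm:A2} by distinguishing $N\le 1$ (use $m(N)\ge m_0/2$) from $N>1$ (use $m(N)N\ge \tfrac{m_0}{2}N^{1-\gamma}$ together with the fact that $\gamma\ge 1$ makes $N^{1-\gamma}$ non-increasing). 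Plugging into \eqref{eq:model_1_prime}, bounding $R(P(t-a))$ below by $R_0$, observing that for $t\in[\hat t,\hat t+A_0]$ the shifted variable $t-a$ ranges in $[\hat t-A_1,\hat t]$ where the hypothesis gives $N_r\le N_{\max}$, and finally factoring $\min\{\inf N_r,\,N_{\max}^{1-\gamma}\}$ out of the integral via $\int_{A_0}^{A_1}m_\rho(t-a)\,da\ge c_0$, yields the claim.

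For Part 2, under $m_0R_0c_0>2$ the amplification factor $K:=m_0R_0c_0/2$ exceeds $1$, so $N_{\min}=K\,N_{\max}^{1-\gamma}>N_{\max}^{1-\gamma}$. Feeding the hypothesis $\inf_{[\hat t-A_1,\hat t]}N_r\ge N_{\max}^{1-\gamma}$ into Part 1 gives $N_r\ge N_{\min}$ on $[\hat t,\hat t+A_0]$, and since $N_{\min}>N_{\max}^{1-\gamma}$ the lower bound $N_{\max}^{1-\gamma}$ survives on the slid window $[\hat t-A_1+A_0,\hat t+A_0]$; a straightforward induction over the windows $[\hat t+kA_0,\hat t+(k+1)A_0]$ propagates $N_r\ge N_{\min}$ to every $t\ge\hat t$. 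Whenever $N_r\in[N_{\max}^{1-\gamma},N_{\max}]$ the same case analysis gives $m(N_r)\,N_r\ge\tfrac{m_0}{2}\,N_{\max}^{1-\gamma}$; inserting this into \eqref{eq:model_3_prime} with $R\le R_1$, and taking the infimum over $s\in[0,1]$ of $\int_{\Omega_0}^{\Omega_1}m_\rho(s-a)\,da$, reproduces the definition of $S_{\min}$.

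Part 3 must manufacture a $\hat t$ verifying the hypothesis of Part 2 from only the qualitative datum $N_r^0>0$ a.e.\ on $[-A_1,0]$. For any $t\in[t_0,t_0+A_0]$ the integrand in \eqref{eq:model_1_prime} is strictly positive on a set of $a$ of positive Lebesgue measure, since $\{m_\rho>0\}$ meets every window $[t-A_1,t-A_0]$ with measure at least $c_0/\mu_{\max}$ by \ref{itm:A1}, $\{N_r^0>0\}$ has full measure in $[-A_1,0]$, and $R\ge R_0>0$; hence $N_r>0$ on $[t_0,t_0+A_0]$, and an easy induction extends this to $[t_0,\infty)$. By the Lipschitz continuity from Proposition \ref{prop:bounded_solutions}, $N_r$ admits a quantitative lower bound $\varepsilon>0$ on a compact window of length $A_1$. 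Feeding $\varepsilon$ into Part 1 and iterating, each full pass of $\lceil A_1/A_0\rceil$ applications multiplies the lower bound by the factor $K>1$ (until the bound crosses the threshold $N_{\max}^{1-\gamma}$), so after finitely many iterations the hypothesis of Part 2 holds and the conclusions $N_r\ge N_{\min}$, $S\ge S_{\min}$ follow from some $t^*$ onward.

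The main obstacle is the bookkeeping of the bootstrap in Part 3: a single application of Part 1 improves $N_r$ only on an interval of length $A_0$, while the hypothesis itself requires an interval of length $A_1$, so the factor $K$ only kicks in once every $\lceil A_1/A_0\rceil$ iterations rather than at every step. The number of iterations required scales like $\log_K(N_{\max}^{1-\gamma}/\varepsilon)$ and $\varepsilon$ depends on the initial datum, which is exactly why $t^*$ is not uniform in the initial condition, as the authors already warn before the statement.
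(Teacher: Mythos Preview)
The proposal is correct. Note that the paper itself supplies no proof of this proposition: it simply refers the reader to \cite{Arl19}, Proposition~4.4, relying on the observation (which you make explicit) that the stochastic price enters \eqref{eq:model_1_prime} and \eqref{eq:model_3_prime} only through $R(P)\in[R_0,R_1]$, so the deterministic estimates carry over verbatim, $\omega$ by $\omega$. Your reconstruction of the three parts---the pointwise inequality $m(N)N\ge\tfrac{m_0}{2}\min\{N,N_{\max}^{1-\gamma}\}$ on $[0,N_{\max}]$, the sliding-window induction with amplification factor $K=m_0R_0c_0/2>1$, and the bootstrap in Part~3 where a full pass of $\lceil A_1/A_0\rceil$ applications of Part~1 is needed before the infimum improves by a factor $K$---is exactly the argument of \cite{Arl19}, so there is nothing to add.
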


\noindent The fact that there exists a time $t^{*}$ after which $N_r(t)$ belongs to a compact set $K$ enabled \cite{Arl19} to show that a similar property holds also for the price $P(t)$; cfr.~Proposition 4.5 and Corollary 4.6 in that paper. However, because of the stochastic nature of our price dynamics, in our case this is no longer true. In particular, this does not allow us to use the same techniques employed in \cite{Arl19} for the study of the attractors of the model. For this reason, we need to introduce the concept of \emph{Random Dynamical System (RDS, henceforth)}, of \emph{random attractors} and of \emph{random invariant measure}; see Appendix \ref{app:RDS_RA_RM}. In addition, we make the following remark.
\begin{remark}
From here on, the theoretical exposition will be made using $t_0=0.$ It can be observed that this does not entail any loss in generality, since we can translate a general solution $(N_r(t),P(t))$ at time $t_0$ to a solution $(N_r'(t),P'(t))=(N_r(t+t_0),P(t+t_0))$ at time $0$, which follows the same model with the seasonal periodic function $m_\rho$ ``shifted'' by $t_0$. Consequently, the result of the next two sections are true for every $t_0\in\mathbb{R}.$
\end{remark}

\section{Existence of a global random attractor}\label{sec:existence}
In this section, we prove the existence of a random attractor and of random invariant measure; we refer to Appendix \ref{app:RDS_RA_RM} for a brief review of the relevant concepts. The traditional  approach employed to analyse the effect of the noise on the attractor of a deterministic dynamical system is that of finding stationary solutions of the Fokker-Planck equation. Indeed, numerically it is often easier to integrate the system forward in time; see \cite{LM94}. However, the perturbation of a deterministic system by noise is such that the support of the (numerically) obtained probability density function corresponds to a neighborhood of the deterministic attractor. Whence, it is possible to get statistical information only, without any link with the geometry of the attractor; see Figure \ref{fig:det_vs_stoc} for a graphical display of the this claim. We will thus employ the so-called RDS approach: instead of integrating the system forward in time, we will run it from a distant point $s$ in the past until the current time $t$ where the system will be frozen. This approach is also called \emph{pullback approach}. 
\begin{figure}[!h]
\includegraphics[scale=0.32]{fig1.png}
\includegraphics[scale=0.32]{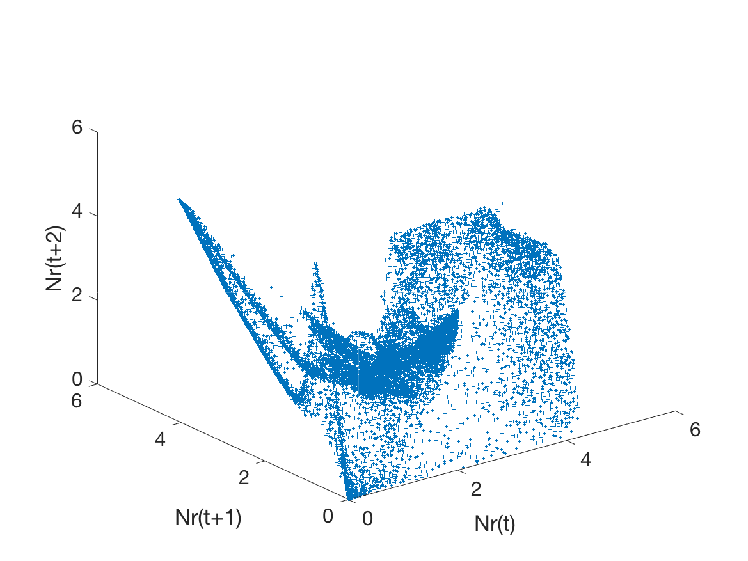}
\caption{Three dimensional plot of $(N_r(t), N_r(t+1), N_r(t+2))$ both for the deterministic (\emph{left panel}) and the stochastic (\emph{right panel}) case.}\label{fig:det_vs_stoc}
\end{figure}
\noindent \noindent First, we define the RDS object of our study (cfr. Definition~\ref{def:RDS}). The set of times is $T=\mathbb{R}_{+}$ and the phase-space $\mathcal{X}$ has been defined in Equation~\eqref{eq:defition_of_X}. The Borel $\sigma$-algebra of the phase-space is denoted by $\mathcal{B}$. The operator $\theta_t(\omega)(s) = \omega(t+s) - \omega(t)$ is the time shift of the Brownian motion, defined $\forall\,t\in\mathbb{R}$. Therefore, the operator $\varphi(t,\omega)$ acts on an element $(N_r, P) \in \mathcal{X}$ in the following way:
\begin{equation*}
\begin{split}
    \varphi(t,\omega)(N_r, P) = (N_r^{t}, P^{t})\,\,&\text{where}\,\,N_r^{t}(s) = N_r(t+s)\,\,\text{and}\,\,P^{t}(s) = P(t+s).
\end{split}
\end{equation*}
The dependence on the parameter $\omega$ of the variables on the \emph{r. h. s} is omitted. By abuse of notation, we will also write $\varphi(t,\omega) N_r = N_r^{t}$ and $\varphi(t,\omega) P = N_r^{t}$. We now state and prove the following lemma. 
\begin{lemma}
The RDS $\varphi(t, \omega):\mathcal{X}\rightarrow\mathcal{X}$ given by the stochastic coupled Birkeland-Yoccoz model in Equations~\eqref{eq:model_1_prime}-\eqref{eq:model_3_prime} is continuous.
\end{lemma}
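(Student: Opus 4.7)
The plan is to establish the pathwise continuity of $\varphi(t,\omega)$: for $\mathcal{P}$-almost every $\omega$ and every $t\ge 0$, if $(N_{r,n}^0,P_n^0)\to (N_r^0,P^0)$ in $\mathcal{X}$ then the corresponding solutions, restricted to $[-T_1+t,t]$, converge uniformly to $(N_r,P)$. Fix such an $\omega$ on which $W_{\cdot}(\omega)$ is a continuous function, and let $(N_{r,n},P_n)$ and $(N_r,P)$ denote the associated solutions. By Proposition~\ref{prop:bounded_solutions} they all take values in the bounded set $[0,N_{\max}]\times(0,+\infty)$, and the price component stays bounded away from $0$ and from $+\infty$ on any finite horizon (this is visible on the exponential form of Equation~\eqref{eq:model_2_second} using that $F$ is bounded and $W_{\cdot}(\omega)$ is locally bounded).

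My main strategy mirrors the constructive proof of Proposition~\ref{prop:bounded_solutions}: partition $[0,t]$ into sub-intervals of length $T_0=\min\{A_0,\Omega_0\}$ and treat one sub-interval at a time. On the first step $[0,T_0]$, the arguments $s-a$ appearing in Equations~\eqref{eq:model_1_prime} and \eqref{eq:model_3_prime} lie in $[-T_1,0]$, so that $N_{r,n}(s)$ and $S_n(s)$ are expressed as explicit delay integrals of the initial data alone. By \ref{itm:A1}--\ref{itm:A3} the integrand is a continuous function of $(N_r^0,P^0)$ and uniformly bounded, so uniform convergence of the initial data combined with dominated convergence produces
\begin{equation*}
\sup_{s\in[0,T_0]}|N_{r,n}(s)-N_r(s)|+\sup_{s\in[0,T_0]}|S_n(s)-S(s)|\xrightarrow[n\to\infty]{}0.
\end{equation*}

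For the price equation on $[0,T_0]$ I would pass to $Q_n=\log P_n$. By Itô's formula and Equation~\eqref{eq:model_2_second}, for the fixed Brownian path $W_{\cdot}(\omega)$,
\begin{equation*}
Q_n(s)=Q_n(0)+\int_0^s\Ll(\lam F(D(e^{Q_n(u)}),S_n(u))-\tfrac{\sigma^2}{2}\Rr)du+\sigma W_s(\omega),
\end{equation*}
which is a deterministic non-autonomous integral equation. The map $z\mapsto F(D(e^z),s)$ is locally Lipschitz in $z$ uniformly in $s\in[0,S_{\max}]$: indeed $D$ is locally Lipschitz by \ref{itm:A4} and, as long as $e^z$ stays in a compact subset of $(0,\infty)$, the denominator $D(e^z)+s$ is bounded below by $D(e^{z_{\max}})>0$. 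A standard Gronwall argument then gives
\begin{equation*}
\sup_{s\in[0,T_0]}|Q_n(s)-Q(s)|\le C e^{CT_0}\Ll(|Q_n(0)-Q(0)|+\sup_{s\in[0,T_0]}|S_n(s)-S(s)|\Rr),
\end{equation*}
hence $P_n\to P$ uniformly on $[0,T_0]$. Iterating the same argument on $[kT_0,(k+1)T_0]$ finitely many times, and composing the resulting continuous one-step solution maps, yields continuity of $\varphi(t,\omega):\mathcal{X}\to\mathcal{X}$.

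The step I expect to be most delicate is the pathwise treatment of Equation~\eqref{eq:model_2_prime}, since classical SDE stability results give continuous dependence only in $L^p$ or in law, whereas an RDS requires $\omega$-wise continuity. Reducing the SDE to the pathwise ODE for $Q=\log P$ is precisely what sidesteps this issue: once the Brownian path is frozen, the problem becomes a deterministic ODE with a continuous forcing term, and standard Gronwall estimates apply. The one point needing a little care is checking that the local Lipschitz constant obtained this way is uniform along the iterates, which follows from the global a priori bounds provided by Proposition~\ref{prop:bounded_solutions} and the continuity of $W_{\cdot}(\omega)$ on the compact interval $[0,t]$.
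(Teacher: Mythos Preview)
Your argument is correct and follows the same inductive scheme as the paper: step forward in blocks of length $T_0$, obtain continuity of $N_r$ and $S$ from the delay integral, then of $P$ from the price equation, and iterate. The paper's proof simply invokes ``continuous dependence on the initial condition for the SDE'' for the $P$ component, whereas you make this pathwise by passing to $Q=\log P$ so that, for fixed $\omega$, the stochastic term $\sigma W_t(\omega)$ becomes a common additive forcing that cancels in the difference $Q_n-Q$, reducing the question to a deterministic Gronwall estimate; this is a genuine clarification, since the usual SDE stability statements are in $L^p$ rather than $\omega$-wise, and it is exactly the geometric (multiplicative-noise) structure of Equation~\eqref{eq:model_2_prime} that makes your reduction work. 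One small caveat: your log transform tacitly assumes $P^0(0)>0$; the degenerate case $P^0(0)=0$ (allowed by $\mathcal{X}_2$) is handled directly by the bound $P(t)\le P(0)\exp\bigl(\lambda t+\sigma W_t(\omega)-\sigma^2 t/2\bigr)$ coming from $|F|\le 1$.
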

\begin{proof}
We prove the continuity of the RDS by induction. On the interval $[0, T_0]$ the continuity of the $P$ component is a straightforward consequence of the continuous dependence on the initial condition for the SDE, whereas the continuity of the $N_r$ component is a consequence of the continuity of the integral operator. Inductively, we can suppose that if $(N_r^0,P^0)$ is arbitrarily close to $(\hat{N_r^0},\hat{P^0})$ in $\Xx,$ then $\phi(t,\om)(N_r^0,P^0)$ is arbitrarily close to $\phi(t,\om)(\hat{N_r^0},\hat{P^0})$ for $t\in[(k-1)T_0,kT_0]$.  The base case can be thus straightforwardly applied to $\phi(t,\om)(N_r^0,P^0)$ and $\phi(t,\om)(\hat{N_r^0},\hat{P^0})$ to obtain continuity also for $t\in[kT_0,(k+1)T_0]$.
\end{proof}

In order to find a global attractor for $\varphi$, one could be tempted to use Theorem~\ref{th:global_attractor} of Appendix \ref{app:RDS_RA_RM}. However, it cannot be applied directly to our RDS because the price function $P(t)$ is not $\omega$-wise bounded; instead, $N_r$ belongs to the interval $[0, N_{max}]\,\forall\,t \geq t_0$. For this reason, we aim at finding a random attractor $A(\omega)$ defined similarly to that in Definition~\ref{def:globally} but with the difference that $A(\omega) = D(\omega) \times \mathcal{X}_2$, where $D(\omega) \subset \mathcal{X}_1$ a compact set on the first component of the space $\mathcal{X}$. Precisely, we have the following definition:

\begin{definition}[Globally attracting set]
\label{def:globally_new}
Let $\varphi$ be a RDS such that there exists a random set $A(\omega):=D(\omega) \times \mathcal{X}_2$, for some random compact set $D(\omega)$, satisfying to the following conditions:
\begin{enumerate}
    \item $\varphi(t, \omega)A(\omega) = A(\theta_t\omega)\,\forall\,t>0$;
    \item $A$ attracts every bounded deterministic set $B \subset \mathcal{X}$.
\end{enumerate}
Then $A$ is said to be a universally or globally attracting set for $\varphi$ onto the first component od the phase-space $\mathcal{X}$.
\end{definition}
\noindent Notice that that asking that $A(\omega)$ attracts every bounded set $B \subset \mathcal{X}$ is equivalent to asking that $D(\omega)$ attracts every bounded set $B \subset \mathcal{X}_{1}$. We give the following
\begin{definition}
Given $B\subset \Xx_1,$ we define $\Omega^1_B(\om)$ as the projection onto $\Xx_1$ of the set $\Omega_{B\times\Xx_2}(\om)$.
\end{definition}

In particular, the following proposition holds true. 
\begin{proposition}
\label{prop:global_new}
Let $\phi$ be a RDS on the Polish space $\Xx$. Suppose there exists a compact set $K_1(\om)$ such that $K_1\times\Xx_2$ absorbs the product between every bounded non-random set $B \subset \Xx_1$ and $\Xx_2$. Then the set
$$A(\om) = \overline{\bigcup_{B\subset\Xx_1} \Omega^1_B(\om)}\times \Xx_2$$
is a global attractor for $\phi$ in the sense of the Definition~\ref{def:globally_new}.
\end{proposition}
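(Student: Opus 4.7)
The plan is to follow the Crauel--Flandoli template for building a random attractor out of a compact absorbing set, taking care that here absorption only controls the first coordinate and that $A(\omega)$ must therefore be sought in the product form $D(\omega)\times\mathcal{X}_2$. I will verify in order that $D(\omega):=\overline{\bigcup_{B\subset\mathcal{X}_1}\Omega_B^1(\omega)}$ is a random compact subset of $\mathcal{X}_1$, that $A(\omega)=D(\omega)\times\mathcal{X}_2$ attracts every bounded $B\subset\mathcal{X}$, and that $\varphi(t,\omega)A(\omega)=A(\theta_t\omega)$.

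For random compactness the absorbing hypothesis applied to any bounded $B_1\subset\mathcal{X}_1$ produces a time $T(B_1,\omega)$ after which $\varphi(t,\theta_{-t}\omega)(B_1\times\mathcal{X}_2)\subset K_1(\omega)\times\mathcal{X}_2$; hence $\Omega_{B_1\times\mathcal{X}_2}(\omega)\subset K_1(\omega)\times\mathcal{X}_2$, so $\Omega_{B_1}^1(\omega)\subset K_1(\omega)$, and $D(\omega)$, being closed inside the random compact $K_1(\omega)$, is a random compact set. Measurability of $\omega\mapsto D(\omega)$ follows from the standard reduction of omega-limits to countable intersections of closures of measurable tubes indexed by rational times. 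For attraction, any bounded $B\subset\mathcal{X}$ has bounded $\mathcal{X}_1$-projection $B_1$, so $B\subset B_1\times\mathcal{X}_2$; the Hausdorff semi-distance from $\varphi(t,\theta_{-t}\omega)(n,p)=(N_r^t,P^t)$ to $A(\omega)$ collapses to $\operatorname{dist}(N_r^t,D(\omega))$, since an approximating point of $A(\omega)$ may be chosen with $\mathcal{X}_2$-coordinate equal to $P^t$. If attraction failed, extracting sequences $t_n\to\infty$, $x_n\in B_1\times\mathcal{X}_2$ with $\operatorname{dist}(N_r^{t_n},D(\omega))\geq\varepsilon$ and using compactness of $K_1(\omega)$ to pass to a convergent subsequence produces a limit in $\Omega_{B_1}^1(\omega)\subset D(\omega)$, a contradiction.

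Invariance is the step I expect to be the main obstacle, precisely because $\mathcal{X}_2$ is non-compact and the $N_r$-dynamics is coupled to $P$. The forward inclusion rests on the cocycle identity
$$\varphi(t,\omega)\circ\varphi(t_n,\theta_{-t_n}\omega)=\varphi(t+t_n,\theta_{-(t+t_n)}\theta_t\omega),$$
which, together with continuity of $\varphi(t,\omega)$, transports an accumulation point of the backward orbit at $\omega$ to one at $\theta_t\omega$; projecting, $\varphi(t,\omega)$ maps $\Omega_B^1(\omega)$ into $D(\theta_t\omega)$. To upgrade this to the full product, I use that the $N_r$-component of $\varphi(t,\omega)(d,p)$ remains in $K_1(\theta_t\omega)$ for every $p\in\mathcal{X}_2$ by Proposition~\ref{prop:bounded_solutions}, and then a density argument, approximating $(d,p)$ by points of $\Omega_{B\times\mathcal{X}_2}(\omega)$ whose $\mathcal{X}_2$-coordinate is engineered through the (pathwise) invertibility of the geometric-Brownian-type $P$-equation to approximate $p$, shows that the first coordinate of $\varphi(t,\omega)(d,p)$ is itself an accumulation point of first coordinates of backward orbits at $\theta_t\omega$, i.e.\ lies in $D(\theta_t\omega)$. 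The reverse inclusion is obtained by running the same argument replacing $\omega$ with $\theta_t\omega$ and using surjectivity of the $P$-dynamics on $\mathcal{X}_2$, driven by any fixed $N_r$-input, to realise every prescribed $p'\in\mathcal{X}_2$ as $P^t$ for some admissible $p$.
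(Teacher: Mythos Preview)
Your approach is exactly the one the paper takes: its entire proof is the single line ``See the proof of Theorem~3.11 in \cite{CRFL92}.'' You have essentially reconstructed that Crauel--Flandoli argument and adapted it to the product setting $\mathcal{X}_1\times\mathcal{X}_2$, and your compactness and attraction steps are fine.

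Where you go beyond the paper---and where you should be careful---is the invariance step. You are right to flag it as the main obstacle. The equality $\varphi(t,\omega)\bigl(D(\omega)\times\mathcal{X}_2\bigr)=D(\theta_t\omega)\times\mathcal{X}_2$ does \emph{not} follow from the stated absorption hypothesis alone: a point $d\in D(\omega)$ arises as the first coordinate of some $(d,q)\in\Omega_{B\times\mathcal{X}_2}(\omega)$ with a \emph{specific} second coordinate $q$, and the usual Crauel--Flandoli cocycle argument only tells you that $\varphi(t,\omega)(d,q)$ projects into $D(\theta_t\omega)$, not $\varphi(t,\omega)(d,p)$ for arbitrary $p\in\mathcal{X}_2$. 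Your remedy---invoking Proposition~\ref{prop:bounded_solutions} and the pathwise invertibility and surjectivity of the $P$-equation to ``engineer'' the $\mathcal{X}_2$-coordinate---imports structure from the specific model that is not part of the proposition's hypotheses. This is not a flaw in your reasoning so much as an indication that the proposition, read as a purely abstract statement, is stronger than a direct adaptation of \cite{CRFL92} actually delivers in this non-compact product setting; the paper's bare citation glosses over precisely this point. If you want the argument to stand on its own, either tighten the proposition's hypotheses (e.g.\ assume the $\mathcal{X}_1$-projection of $\varphi(t,\omega)(d,p)$ is independent of $p$ after absorption, or that the $\mathcal{X}_2$-dynamics is surjective for each fixed first-coordinate history) or state explicitly that the invariance verification uses the model's equations.
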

\begin{proof}
See the proof of Theorem 3.11 in \cite{CRFL92}. 
\end{proof}

\noindent Thus, we can now define the set 
\begin{equation}\label{eq:K_1}
\begin{split}
     K_1:=\big\{N_r^0\in\Xx_1: 0\le N_r^0(t)\le N_{\max}\ &\text{for all} \ t\in[-T_1,0], \ \text{and}\\ 
               &N_r^0 \ \text{is} \ 2m_0R_1\mu_{\max}\text{-Lipschitz-continuous} \big\};
\end{split}
\end{equation}
see Proposition~\ref{prop:bounded_solutions}. In particular, we have that:
\begin{proposition}\label{prop:existence_globally}
The set $K_1$ is a compact set and it satisfies the assumptions of Proposition~\ref{prop:global_new}. As a consequence, there exists a global attractor $A(\omega)$ onto the first component of the space $\mathcal{X}$. 
\end{proposition}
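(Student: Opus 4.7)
The proof reduces to two independent verifications, both essentially immediate from Proposition~\ref{prop:bounded_solutions}.

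Compactness of $K_1$ follows from Arzel\`a-Ascoli. Its elements are defined on the compact interval $[-T_1, 0]$, take values in $[0, N_{\max}]$ and share the common Lipschitz constant $L_1 = 2 m_0 R_1 \mu_{\max}$, so they form a pointwise bounded and uniformly equicontinuous family in $C^0([-T_1, 0]; \mathbb{R})$ with the sup-norm; this yields relative compactness. For closedness, I would note that the uniform range $[0, N_{\max}]$ and the $L_1$-Lipschitz bound are both preserved under uniform limits, and that the compatibility relation~\eqref{eq:condition_on_N_0} passes to the limit by dominated convergence applied to the integral (with $m_\rho$ bounded, $m$ and $R$ continuous, and the price profile $P$ held fixed).

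For the absorption property, fix a bounded deterministic set $B \subset \mathcal{X}_1$ and any $(N_r^0, P^0) \in B \times \mathcal{X}_2$. The conclusions of Proposition~\ref{prop:bounded_solutions} hold $\mathcal{P}$-a.s.\ with constants $N_{\max}$ and $L_1$ that do \emph{not} depend on the initial data; in particular they are uniform over $B$ and over $\mathcal{X}_2$. For every $t \geq T_1$, the shifted profile $\varphi(t, \omega) N_r^0 : s \mapsto N_r(t+s)$, $s \in [-T_1, 0]$, is the restriction of $N_r$ to $[t - T_1, t] \subset [0, \infty)$; it therefore inherits both estimates and lies in $K_1$. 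Since the absorbing time $T_1$ is deterministic, the inclusion $\varphi(t, \omega)(B \times \mathcal{X}_2) \subset K_1 \times \mathcal{X}_2$ holds both forward and in the pullback sense required by Proposition~\ref{prop:global_new}.

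Combining these two facts, Proposition~\ref{prop:global_new} applies directly and produces the global attractor of the form $A(\omega) = \overline{\bigcup_{B \subset \mathcal{X}_1} \Omega_B^1(\omega)} \times \mathcal{X}_2$. There is no substantial obstacle here: all of the analytic content has been absorbed into the a priori bounds of Proposition~\ref{prop:bounded_solutions}. The only mild technicality is that the compatibility constraint in the definition of $\mathcal{X}_1$ implicitly references the price component, so when verifying closedness of $K_1$ one has to keep $P$ fixed; this is harmless because absorption only feeds in profiles already arising from genuine solutions, for which the compatibility is automatic.
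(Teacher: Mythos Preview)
Your proof is correct and follows the same route as the paper: Arzel\`a--Ascoli for compactness of $K_1$, and Proposition~\ref{prop:bounded_solutions} to get the uniform absorption time $t_B(\omega)=T_1$ for every bounded $B\subset\mathcal{X}_1$ and every $\omega$. The paper's argument is a two-line version of exactly this; your extra remarks on closedness and on the compatibility constraint~\eqref{eq:condition_on_N_0} simply fill in details the paper leaves implicit.
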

\begin{proof}
The set $K_1$ is compact thanks to Ascoli-Arzelà's Theorem. In addition, thanks to Proposition~\ref{prop:bounded_solutions} the set $K_1\times\Xx_2$ absorbs the product between every bounded nonrandom set $B \subset \Xx_1$ and $\Xx_2$. Indeed, for every $(N_r^0,P^0)\in\Xx$ and for all $\om\in\Om$ it holds that $\phi(t,\om)(N_r^0,P^0)$ belongs to $K_1\times\Xx_2$ for every $t\ge T_1.$
\end{proof}

\noindent Notice that the notion of random attractor is \emph{global} and there is no a definition of basin of attraction in the random framework (in the deterministic setup, the attractor's basin of attraction is given by the set of functions $N_r$ that are positive on the interval $[-T_1,0)$; see~\cite{Arl19}, Section 4).\\
\indent For this reason, we follow an averaging procedure described in detail in Subsection~\ref{subsec:invariant} on invariant measures on random sets. In particular, we want to apply Theorem~\ref{th:prohorov_random}. To this end, we have to define a suitable probability distribution $\nu$ for the initial values $(N_r^{0}, P^{0}) \in \mathcal{X}$. We define it component-wise as the conditional product of a measure $\nu_1(N_r^{0}|P^{0})$ on $\mathcal{X}_1$ and of a measure $\nu_2(P^{0})$ on $\mathcal{X}_2$. Notice that there is no a standard recipe to construct such measures: in this work, we define them via the Brownian motion's law. For $\nu_2$ we can choose any probability measure on the set of continuous functions: we pick the law of the absolute value of a two-sided Brownian motion starting at $t=-T_1$. In order to define $\nu_1$, we have to remember that for a fixed $P^{0} \in \mathcal{X}_2$ every function $N_r^{0} \in \mathcal{X}_1$ satisfies Equation~\eqref{eq:condition_on_N_0}, which does not allow for discontinuities at $t=0$. Thus, we introduce the stochastic process $(N_r^{0}(t))$ and we set it equal to the absolute value of a two-sided Brownian motion starting at $t=-T_1$ for $t \in [-T_1,-T_0]$ and to a linear interpolant between the point $N_{-T_{0}}$ and the point
\begin{equation*}\label{eq:second_point_affine}
    N_r^{0}(0) = \int_{A_0}^{A_1} m_{\rho}(-a) m(N_r^{0}(-a)) N_r^{0}(-a) R(P^{0}(-a))\,ds
\end{equation*}
on $[-T_0, 0]$. We define $\nu_1(\,\cdot\,| P^{0})$ as the law of such a process on $\mathcal{X}_1$ and we construct $\nu$ in the following way:
\begin{equation*}\label{eq:measure_nu}
    \nu(A) := \int_{\mathcal{X}_1}\int_{\mathcal{X}_2} I_{A}(N_r^{0}, P^{0})\,d\nu_1(N_r^{0}|P^{0})\,d\nu_2(P^{0})\quad\forall A \in \mathcal{B}.
\end{equation*}
\noindent At this point, our aim is to show that $\forall \varepsilon > 0$ we can construct a compact set $C_{\varepsilon}$ such that $\Theta_{t} \nu(\Omega \times C_{\varepsilon}) > 1 - \varepsilon$. We define $C_{\varepsilon}:=K_1 \times K_2^{\varepsilon}$, where $K_1 \subset \mathcal{X}_1$ and $K_2^{\varepsilon} \subset \mathcal{X}_2$ are two suitable compact sets. The set $K_1$ is the set defined in Equation~\eqref{eq:K_1}; in particular, Propositions~\ref{prop:bounded_solutions} and \ref{prop:uniform_persistence_for_N} ensure that $\varphi(t, \omega)N_r^{0} \in K_1\,\forall t \geq T_1\,\,\text{and}\,\,\forall \omega \in \Omega$. Therefore, we only have to find the set $K_2^{\varepsilon}$. Toward this aim, we recall the following two results, which are valid for the deterministic model in Equation~\eqref{eq:model_1}-\eqref{eq:model_3}.

\begin{proposition}[cfr.~\cite{Arl19}, Proposition 4.5]
\label{prop:price_dynamics}
Let $(N_r, P)$ be a solution of \eqref{eq:model_1}-\eqref{eq:model_3} and assume that some $t^* \geq t_0$ exists such that $0 < S_{\min} \le S(t) \le S_{\max}$ for all $t \ge t^*$, where we recall that $S_{\min}$ is defined in Proposition~\ref{prop:uniform_persistence_for_N} and $S_{\max}$ is defined in Proposition~\ref{prop:bounded_solutions}.
\begin{enumerate}
\item Let $P^* \geq 0$ be such that $D(P^*) < S_{\min}$. If $P(\hat{t}) > P^*$ for some $\hat{t} \ge t^*$, then we have
\[
P(t) < P^* \qquad \forall t > \hat{t} + \dfrac{P(\hat{t}) - P^*}{\lambda P^*} \cdot \dfrac{D_{0} + S_{\max}}{S_{\min} - D(P^*)}\,.
\]
\item Let $P_* \geq 0$ be such that $D(P_*) > S_{\max}$. If $0 < P(\hat{t}) < P_*$ for some $\hat{t} \ge t^*$, then we have
\[
P(t) > P_* \qquad \forall t > \hat{t} + \dfrac{P_* - P(\hat{t})}{\lambda P(\hat{t})} \cdot \dfrac{D_{0} + S_{\max}}{D(P_*) - S_{\max}}\,.
\]
\end{enumerate}
\end{proposition}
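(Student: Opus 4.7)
The proposition is a purely deterministic statement about the ODE \eqref{eq:model_2}, so the plan is to reduce everything to a Gronwall-type bound on $P$ obtained by controlling the sign and magnitude of $\lambda F(D(P(t)),S(t))$ under the standing hypotheses on $S(t)$. Throughout, I will use that $F(D,S)=(D-S)/(D+S)$ is increasing in $D$ and decreasing in $S$, together with $D(P)\le D_0$ (by monotonicity of $D$ and $D(+\infty)=0$) and the standing bounds $S_{\min}\le S(t)\le S_{\max}$ for $t\ge t^\ast$.

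\textbf{Proof of (1).} Fix the hypothesis $D(P^\ast)<S_{\min}$. As long as $P(t)\ge P^\ast$, the monotonicity of $D$ gives $D(P(t))\le D(P^\ast)<S_{\min}\le S(t)$, so the numerator $D(P(t))-S(t)$ is negative and bounded above by $D(P^\ast)-S_{\min}$; meanwhile the denominator satisfies $0<D(P(t))+S(t)\le D_0+S_{\max}$. Substituting into \eqref{eq:model_2} yields
\[
\frac{P'(t)}{P(t)} \le \lambda\,\frac{D(P^\ast)-S_{\min}}{D_0+S_{\max}} =: -c,\qquad c>0,
\]
so $P'(t)\le -c\,P(t)\le -c\,P^\ast$ whenever $P(t)\ge P^\ast$ and $t\ge t^\ast$. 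Integrating from $\hat t$, as long as $P$ remains above $P^\ast$,
\[
P(t)\le P(\hat t)-c\,P^\ast\,(t-\hat t).
\]
Setting $T^\ast:=\inf\{t\ge\hat t:P(t)<P^\ast\}$, the right-hand side forces $T^\ast-\hat t\le (P(\hat t)-P^\ast)/(c\,P^\ast)$, which is exactly the bound in the statement after unfolding $c$. It remains to show that $P$ cannot re-enter $\{P\ge P^\ast\}$ afterwards: if there were a first $t'>T^\ast$ with $P(t')=P^\ast$, then $P'(t')\ge 0$ would be required by continuity, contradicting the estimate $P'(t')\le -c\,P^\ast<0$ that applies at every point where $P=P^\ast$.

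\textbf{Proof of (2).} This is symmetric. Under the assumption $D(P_\ast)>S_{\max}$, whenever $0<P(t)\le P_\ast$ the monotonicity of $D$ gives $D(P(t))\ge D(P_\ast)>S_{\max}\ge S(t)$, so the numerator in $F$ is bounded below by $D(P_\ast)-S_{\max}>0$ and the denominator above by $D_0+S_{\max}$. Thus
\[
\frac{P'(t)}{P(t)}\ge \lambda\,\frac{D(P_\ast)-S_{\max}}{D_0+S_{\max}}=:c'>0,
\]
so $P'(t)\ge c'\,P(t)\ge c'\,P(\hat t)$ as long as $P(s)\in[P(\hat t),P_\ast]$ for all $s\in[\hat t,t]$ (the lower bound $P(t)\ge P(\hat t)$ follows from $P'>0$ on this range). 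Integrating gives $P(t)\ge P(\hat t)+c'\,P(\hat t)\,(t-\hat t)$, which reaches $P_\ast$ within the claimed horizon. The same sign-of-derivative argument as in (1) shows $P$ cannot return below $P_\ast$.

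\textbf{Expected obstacle.} None of the steps is technically hard; the only subtlety is the final ``no return'' argument, which must exclude tangential re-crossings. This is handled cleanly by noting that the estimate on $P'$ is \emph{strict} and local in $P$ (it holds at every point where $P=P^\ast$, not only along the first descent), so a re-crossing point would immediately contradict the sign of $P'$ there.
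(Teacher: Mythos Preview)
The paper does not give its own proof of this proposition; it simply cites \cite{Arl19}, Proposition~4.5. Your argument is correct and is exactly the natural one: bound $F(D(P),S)$ from above (respectively below) by a strictly negative (respectively positive) constant on the region $\{P\ge P^\ast\}$ (respectively $\{P\le P_\ast\}$), convert this into a linear differential inequality for $P$, integrate to get the explicit time bound, and then use the strict sign of $P'$ at the threshold to rule out re-crossings. There is nothing to add.
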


\begin{corollary}[cfr.~\cite{Arl19}, Corollary 4.6]
\label{prop:price_dynamics_2}
Assume that $m_0R_0c_0>2$, $D_{0}>S_{\max}$ and $S_{\min} > 0$ and let $C>1$ and $P_{\min}, P_{\max}$ be such that $[P_{\min}, P_{\max}] = D^{-1}([S_{\min}, S_{\max}])$.
\begin{enumerate}
\item If $ S(t) \ge S_{\min} $ for all $t \ge t^*$ and  $P(\hat{t}) \in [P_{\min} / C, C P_{\max}])$ for some $\hat{t} \ge t^*$, then $P(t) \in [P_{\min} / C, C P_{\max}])$ for all $t \ge \hat{t}$.
\item In any case, for every non trivial solution $(N_r, P)$ of \eqref{eq:model_1}-\eqref{eq:model_3} there exists $\hat{t}$ such that $P(t) \in [P_{\min} / C, C P_{\max}])$ for all $t \ge \hat{t}$.
\item Moreover, as long as $P(t)$ stays in $[P_{\min} / C, C P_{\max}])$, we have that $|P^{\prime}(t)| \le C \lambda P_{\max}$.\qedhere
\end{enumerate}
\end{corollary}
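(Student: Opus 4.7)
The strategy is to leverage Proposition \ref{prop:price_dynamics} together with a direct sign analysis of the price vector field $\lambda P\, F(D(P),S)$ on the boundary of the rectangle $[P_{\min}/C, CP_{\max}]$. The key observation is that, by strict monotonicity of $D$ and the definition of $P_{\min}, P_{\max}$, one has $D(CP_{\max}) < D(P_{\max}) = S_{\min}$ and $D(P_{\min}/C) > D(P_{\min}) = S_{\max}$, so the numerator $D(P)-S$ of $F$ has a definite sign on each boundary of the target interval whenever $S(t)\in[S_{\min},S_{\max}]$.

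For part~(1), I would argue by a barrier/trapping argument: as long as $t \ge t^*$, the hypothesis $S(t) \ge S_{\min}$ gives at $P = CP_{\max}$ the bound $F(D(CP_{\max}),S(t)) = \frac{D(CP_{\max})-S(t)}{D(CP_{\max})+S(t)} < 0$, so $P'(t) < 0$ whenever $P(t)=CP_{\max}$; analogously $P'(t) > 0$ whenever $P(t)=P_{\min}/C$, using $S(t)\le S_{\max}$ (which follows from Proposition~\ref{prop:bounded_solutions}). By continuity of $P$, the orbit cannot cross either boundary outward, hence $P(t)$ remains in $[P_{\min}/C, CP_{\max}]$ for all $t\ge\hat t$.

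For part~(2), I would first invoke Proposition~\ref{prop:uniform_persistence_for_N}(3) under the assumptions $m_0 R_0 c_0 > 2$ and non-triviality of $N_r^0$ to obtain some $t^*$ beyond which $S_{\min}\le S(t)\le S_{\max}$. Then I apply Proposition \ref{prop:price_dynamics} with $P^* = CP_{\max}$ (whose admissibility requires $D(CP_{\max}) < S_{\min}$, valid since $C>1$) and with $P_* = P_{\min}/C$ (admissible since $D(P_{\min}/C)>S_{\max}$). If $P(t^*) > CP_{\max}$, Proposition~\ref{prop:price_dynamics}(1) furnishes a finite time by which $P$ drops below $CP_{\max}$; by continuity it crosses $CP_{\max}$, and part~(1) then traps it. The symmetric argument using Proposition~\ref{prop:price_dynamics}(2) handles the case $P(t^*) < P_{\min}/C$; a minor technical point is that we need $P(t^*)>0$, which holds for any non-trivial solution since $P$ satisfies a linear ODE with bounded coefficients, so $P$ never vanishes. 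If $P(t^*)$ already lies in the target interval, we are done immediately by part~(1).

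For part~(3), the estimate is immediate from $|F(D,S)| \le 1$ (as $D,S\ge 0$), which yields $|P'(t)| = \lambda P(t)|F(D(P(t)),S(t))| \le \lambda P(t) \le \lambda\, C P_{\max}$ whenever $P(t)\in[P_{\min}/C, CP_{\max}]$. The only substantive difficulty in the whole argument is the first-crossing step in part~(2): one must ensure that after the exit time supplied by Proposition~\ref{prop:price_dynamics}, the orbit actually enters the interval rather than overshooting to the opposite boundary, but this is automatic by continuity, as the first crossing places $P$ exactly on $CP_{\max}$ (respectively $P_{\min}/C$), and part~(1) applies from that instant.
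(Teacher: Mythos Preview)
The paper does not supply its own proof of this corollary; it is simply restated from \cite{Arl19} with the \texttt{\textbackslash qedhere} marker at the end of the enumeration, so there is nothing to compare against directly. Your argument is correct and is the natural one: the barrier/trapping argument for part~(1) (sign of $F$ at the two endpoints, using $D(CP_{\max})<S_{\min}$ and $D(P_{\min}/C)>S_{\max}$ from strict monotonicity of $D$ and $C>1$), the combination of Proposition~\ref{prop:uniform_persistence_for_N}(3) with Proposition~\ref{prop:price_dynamics} and a first-crossing step for part~(2), and the elementary bound $|F|\le 1$ for part~(3) together constitute exactly the expected proof, and this is presumably what \cite{Arl19} does as well.
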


\noindent The previous results guarantee that there exists a $P^{*}$ such that $F(P(t), S(t))<0$ whenever $P(t)>P^{*}$. We define $K_2^{\varepsilon}$ as the set
\begin{equation*}
\label{eq:K2epsilon}
 K_2^{\varepsilon}:=\{P^{0}\in\mathcal{X}_2\,:\,\sup_{t \in [-T_1,0]}P^{0}(t)\leq P^{*}\,\text{for some}\,P^{*}\}
\end{equation*}
\noindent Without loss of generality, we can assume $P^{*}$ big enough in order to have that $\nu(K_2^{\varepsilon})>1-\varepsilon$. In particular, because of the mean-reversion behaviour of the stochastic price process, it is not difficult to prove the following 
\begin{proposition}\label{prop:bounded}
Let $P(0) < P^{*}$. Then, $\forall\varepsilon>0$ there exists an $M>0$ such that $\forall t>T_1$ we have that $\mathcal{P}(P(t) > M) < \varepsilon$.
\end{proposition}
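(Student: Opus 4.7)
The plan is to bound the fractional moment $\E[P(t)^{\al}]$ uniformly in $t$ and then invoke Markov's inequality. By Ito's formula, $Q=\log P$ satisfies Equation~\eqref{eq:model_2_second} with $|F|\leq 1$ everywhere, and since $P^{*}$ is chosen so that $D(P^{*})<S_{\min}$, whenever $P>P^{*}$ and $S\geq S_{\min}$ one has
\[
F(D(P),S)\leq -\kappa,\qquad \kappa:=\frac{S_{\min}-D(P^{*})}{S_{\min}+D(P^{*})}>0.
\]
By Proposition~\ref{prop:uniform_persistence_for_N}(3), under the standing assumption $m_{0}R_{0}c_{0}>2$ there exists an a.s.\ finite stopping time $\tau^{*}$ (adapted to the natural filtration of $W$) after which $S(t)\geq S_{\min}$. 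Given $\eps>0$, the first step is to pick $T\geq T_{1}$ large enough that $\mathcal{P}(\tau^{*}>T)<\eps/2$.

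Next, for a fixed $\al\in(0,1]$, I would apply Ito's formula to $P^{\al}$:
\[
d(P^{\al})=P^{\al}\Ll[\al\lam F+\tfrac{\al(\al-1)}{2}\sigma^{2}\Rr]dt+\al\sigma P^{\al}\,dW_{t},
\]
noting that the Ito correction term is nonpositive for $\al\leq 1$. On $\{\tau^{*}\leq T\}$ and for $s\geq T$, decomposing the drift over $\{P\leq P^{*}\}\cup\{P>P^{*}\}$ and using $F\leq -\kappa$ on the latter together with $|F|\leq 1$ on the former yields the pointwise estimate
\[
\al\lam F P^{\al}\leq -\al\lam\kappa P^{\al}+\al\lam(1+\kappa)(P^{*})^{\al}.
\]
Conditioning on $\mathcal{F}_{T}$ and applying Gronwall (after a routine localization of the stochastic integral) then gives
\[
\sup_{t>T}\E\Ll[P(t)^{\al}\,\1_{\tau^{*}\leq T}\Rr]\leq \E[P(T)^{\al}]+\frac{(1+\kappa)(P^{*})^{\al}}{\kappa}=:C_{1}<\infty,
\]
the finiteness of $\E[P(T)^{\al}]$ following from the crude pathwise bound $P(T)\leq P(0)\exp((\lam-\sigma^{2}/2)T+\sigma W_{T})$ and standard Gaussian moment identities. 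Markov's inequality then produces $\mathcal{P}(P(t)>M,\tau^{*}\leq T)\leq C_{1}/M^{\al}<\eps/2$ for all $M$ sufficiently large and every $t>T$.

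Finally, I would treat $t\in(T_{1},T]$ by using the same pathwise estimate $P(t)\leq P(0)\exp((\lam-\sigma^{2}/2)T+\sigma\max_{s\in[0,T]}W_{s})$ together with the fact that $\max_{s\in[0,T]}W_{s}$ has all exponential moments (by the reflection principle), producing a uniformly tight family $\{P(t):t\in(T_{1},T]\}$ from which $\sup_{t\in(T_{1},T]}\mathcal{P}(P(t)>M)<\eps$ follows for $M$ large. Combining the three estimates above, together with $\mathcal{P}(\tau^{*}>T)<\eps/2$, yields the claim. The main obstacle is that the mean-reverting drift activates only after the random time $\tau^{*}$ rather than deterministically at $T_{1}$; my proof therefore splits the sample space according to $\{\tau^{*}\leq T\}$ and treats the transient window $(T_{1},T]$ separately by the crude geometric-Brownian-motion bound, while the mean-reverting long-time regime is handled by the Lyapunov-type Gronwall estimate on $P^{\al}$.
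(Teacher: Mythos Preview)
Your argument is correct and reaches the same conclusion, but it proceeds along a genuinely different route from the paper. The paper's proof is pathwise: it introduces the up-crossing times $(\sigma_n,\tau_n)$ of the levels $P^{*}$ and $P^{*}+1$, takes the last up-crossing time $\tau_t\leq t$ (so that $P(\tau_t)=P^{*}+1$ and $P$ stays above $P^{*}$ on $[\tau_t,t]$), and then compares the increment of $\log P$ on $[\tau_t,t]$ with a Brownian motion with strictly negative drift $a<0$. The uniform-in-$t$ tail bound then follows because $\sup_{s\geq 0}(as+B_s)$ has exponential tails when $a<0$. Your argument is instead of Lyapunov--Foster type: you bound $\E[P(t)^{\al}]$ via It\^o's formula on $P^{\al}$ and a Gronwall estimate exploiting the drift inequality $\al\lam F P^{\al}\leq -\al\lam\kappa P^{\al}+\al\lam(1+\kappa)(P^{*})^{\al}$, and conclude by Markov.

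Both approaches are standard for mean-reverting diffusions; the paper's is shorter and gives exponential tails directly, while yours is more systematic and, notably, handles explicitly a point the paper glosses over: the drift is only guaranteed to be negative above $P^{*}$ once $S\geq S_{\min}$, which by Proposition~\ref{prop:uniform_persistence_for_N} happens after a (possibly initial-condition-dependent) time $t^{*}$. Your splitting on $\{\tau^{*}\leq T\}$ together with the crude geometric-Brownian envelope on the finite window $(T_1,T]$ deals with this cleanly. One cosmetic correction: in the transient bound, the inequality $P(t)\leq P(0)\exp((\lam-\sigma^{2}/2)T+\sigma\max_{s\leq T}W_s)$ need not hold for all $t\leq T$ when $\lam<\sigma^{2}/2$; replace it by $P(t)\leq P(0)\exp(\lam T+\sigma\max_{s\leq T}W_s)$ (drop the $-\sigma^{2}/2$ before passing to the supremum), which follows from $F\leq 1$ and $\lam t\leq \lam T$, and the rest of your argument is unaffected.
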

\begin{proof}
We firstly recall that the drift term is smaller than $0$ (more precisely, it's smaller than $a<0$) for $P(t)>P^*.$ 
\noindent We introduce two sequences of stopping times to control the behaviour of the stochastic process $P(t)$:
\begin{align*}
\tau_0&:=0;\\
\sigma_1 &:= \inf\{ t \ge 0 : P(t) \ge P^* \};\\
\tau_n &:= \inf\{ t \ge \sigma_n : P(t) \ge P^*+1\};\\
\sigma_{n+1} &:= \inf\{ t \ge \tau_n : P(t) \le P^*\};
\end{align*}
which are nothing but the ``up-crossings'' at the levels $P^*$ and $P^*+1.$

\noindent We also define the index $N(t):= \inf \{ k : \tau_k >t\}-1$, and the stopping time $\tau_t=\tau_{N(t)}.$

\noindent We thus have the following inequalities, for any $M>P^*+1$:
\begin{align*}\mathbb{P}[P(t)>M]&=\underbrace{\mathbb{P}[P(t)>M,\tau_t=0]}_{=\,0}+\underbrace{\mathbb{P}[P(t)>M,\tau_t\ne0, P(t)<P^*+1]}_{=\,0}+\\ &\quad+\mathbb{P}[P(t)>M,\tau_t\ne0, P(t)>P^*+1]\\
&\le\mathbb{P}[P(t)-P(\tau_t)>M-P^*-1,\tau_t\ne0]\\
&\le \mathbb{P}[a(t-\tau_t)+W_t-W_{\tau_t}>M-P^*+1, \tau_t\ne0]\\
&\le \mathbb{P}[a(t-\tau_t)+W_t-W_{\tau_t}>M-P^*+1]
\end{align*}
 and the last term is smaller than $\epsilon$ for all $t>0$ if we pick $M$ big enough. The first inequality above is due to the fact that $P(\tau_t)$ is equal to $P^*+1,$ since $\tau_t$ is not zero. The second inequality is due to the fact that $P$ remains always above $P^*$ between $\tau_t$ and $t$: if it got below $P^*$ we would have another up-crossing, being $P(t)$ above $P^*+1,$ which would collide with the definition of $N(t)$. Since when $P(t)>P^*$ we have $P(t)-P(\tau_t)<a(t-\tau_t)+W_t-W_{\tau_t},$ we thus conclude the proof.
\end{proof}

In particular, $\forall\,t \geq T_1$ we have that: 
\begin{equation}\label{eq:bound}
    \Theta_t\nu\big(\Omega\times(K_1\times K_2^\eps)\big) \ge 1-\eps,
\end{equation}
\noindent which is translated into a bound on $\mu_t(\Omega \times C_{\varepsilon})$, being the latter the average of $\Theta_t\nu(\Omega \times  C_{\varepsilon})$. The following corollary immediately follows.

\begin{corollary}\label{corol:existence_random_measure}
Given the bound in Equation~\eqref{eq:bound}, the family $(\mu_t)_{t \geq T_1}$ is tight. As a consequence, there exists an invariant random measure $\boldsymbol\mu$ which is the limit in the weak topology of a sub-sequence of $(\mu_t)_{t \in T}$ for the model in Equations~\eqref{eq:model_1_prime}-\eqref{eq:model_3_prime}.
\end{corollary}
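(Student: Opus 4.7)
The plan is to deduce tightness of the time-averages $(\mu_t)_{t\ge T_1}$ directly from Equation~\eqref{eq:bound}, to invoke the Prohorov-type Theorem~\ref{th:prohorov_random} to extract a weakly convergent subsequence, and then to verify invariance of the limit by a standard Krylov--Bogolyubov-style telescoping argument.

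First I would check that $C_\varepsilon=K_1\times K_2^\varepsilon$ is a compact subset of $\mathcal{X}$. Compactness of $K_1\subset\mathcal{X}_1$ is already provided by Proposition~\ref{prop:existence_globally}. For $K_2^\varepsilon\subset\mathcal{X}_2$ the mere sup-norm control $\sup_{[-T_1,0]}P^0\le P^*$ is not by itself enough to give compactness in $C^0([-T_1,0])$, so its definition has to be strengthened by adding a uniform modulus-of-continuity constraint (a H\"older-$\alpha$ condition with any $\alpha<1/2$ suffices, reflecting the regularity of Brownian paths), after which Ascoli--Arzel\`a applies. Proposition~\ref{prop:bounded} together with standard estimates on the marginal $\nu_2$ then guarantees that this refined $K_2^\varepsilon$ still satisfies $\Theta_t\nu(\Omega\times C_\varepsilon)\ge 1-\varepsilon$ for every $t\ge T_1$, possibly after enlarging $P^*$.

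Next I would transfer this bound to $\mu_t$. Since $\mu_t$ is constructed as the time-average of $(\Theta_s\nu)_{s\ge 0}$, Fubini and Equation~\eqref{eq:bound} yield
\[
\mu_t(\Omega\times C_\varepsilon)=\frac{1}{t}\int_0^t\Theta_s\nu(\Omega\times C_\varepsilon)\,ds\ge\frac{t-T_1}{t}(1-\varepsilon),
\]
which exceeds $1-2\varepsilon$ for $t$ large. Because $C_\varepsilon$ is compact and $\varepsilon$ is arbitrary, this is precisely the tightness condition demanded by Theorem~\ref{th:prohorov_random}; hence there exist $t_n\to\infty$ and a random measure $\boldsymbol\mu$ with $\mu_{t_n}\rightharpoonup\boldsymbol\mu$ in the weak topology on random measures.

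Finally, invariance $\Theta_\tau\boldsymbol\mu=\boldsymbol\mu$ for every $\tau\ge 0$ would follow from the telescoping identity
\[
\Theta_\tau\mu_t-\mu_t=\frac{1}{t}\left(\int_t^{t+\tau}\Theta_s\nu\,ds-\int_0^\tau\Theta_s\nu\,ds\right),
\]
whose pairing with any bounded continuous test function is $O(1/t)$; taking $t=t_n\to\infty$ kills the right-hand side and yields $\Theta_\tau\boldsymbol\mu=\boldsymbol\mu$. The main obstacle I anticipate is the compactness of $K_2^\varepsilon$: the stochastic price paths governed by~\eqref{eq:model_2_prime} are not Lipschitz, so the deterministic derivative bound in Corollary~\ref{prop:price_dynamics_2}(3) cannot be reused directly, and one must instead lean on Kolmogorov-continuity estimates for the SDE to show that the chosen modulus of continuity is tight uniformly in $t\ge T_1$.
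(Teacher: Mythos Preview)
Your approach is precisely the paper's: tightness of the averages feeds Theorem~\ref{th:prohorov_random} to get a subsequential limit, and invariance then comes from the averaging construction. The paper's own proof is a two-liner that simply cites Theorem~\ref{th:prohorov_random} for existence and Proposition~\ref{prop:averaging} for invariance; your telescoping identity is exactly what underlies the latter proposition, so you are expanding the same argument rather than taking a different route.

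Where you are \emph{more} careful than the paper is on the compactness of $K_2^\varepsilon$. The paper defines $K_2^\varepsilon$ purely through a sup-norm bound on $P^0$, and then silently asserts that $C_\varepsilon=K_1\times K_2^\varepsilon$ is the compact set needed for tightness. You are right that a closed bounded ball in $C^0([-T_1,0])$ is not compact, so some equicontinuity input is needed; your proposed H\"older-$\alpha$ refinement (with $\alpha<1/2$, matching Brownian regularity) together with a Kolmogorov-type modulus-of-continuity estimate on the SDE~\eqref{eq:model_2_prime} is a sensible fix, and the paper does not supply one. This is a genuine lacuna in the paper that your proposal addresses rather than introduces.
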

\begin{proof}
The existence follows from Theorem\ref{th:prohorov_random}, whereas the invariance from Proposition\ref{prop:averaging}.
\end{proof}

\noindent In addition, we can show the following proposition. 
\begin{proposition}
It holds that $\boldsymbol\mu\big(\Om\times (C_{\min}\times\Xx_2)\big)=0,$ where $$C_{\min}:=\Ll\{N\in\Xx_1:\min_{t\in[-T_1,0]}N(t)<N_{\min}\Rr\}.$$
\end{proposition}
\begin{proof}
Since $\boldsymbol\mu\big(\Om\times (C_{\min}\times\Xx_2)\big)$ is the limit of (a subsequence of) the averages of $\Theta_t\nu\big(\Om\times (C_{\min}\times\Xx_2)\big)$, it is sufficient to show that these quantities tend to zero as $t$ goes to $+\infty.$. Thanks to Proposition \ref{prop:uniform_persistence_for_N}, if $m_0R_0c_0>2$ we have that $$\phi(t,\om)^{-1}(C_{\min})\subset\Ll\{N_r^0\in\Xx_1: \min_{t\in[-T_1,0]}N(t)<\eta(t)\Rr\},$$
for some $\eta(t)$ decreasing function converging to $0$ as $t$ goes to infinity. In particular, the measure $\nu$ of this set converges to zero; indeed, by definition, $\nu$-almost surely the functions $N_r^0$ are positive almost everywhere on $[-T_1,0]$. Thus, $\Theta_t\nu\big(\Om\times (C_{\min}\times\Xx_2)\big)\to0$ as $t\to\infty.$ Thus, we can conclude that $\boldsymbol\mu\big(\Om\times(C_{\min}\times\Xx_2)\big)=0$ by approximating the indicator function of the set $\Om\times(C_{\min}\times\Xx_2)$ with bounded continuous functions $f$.
\end{proof}

\begin{proposition}
It holds that $\boldsymbol\mu\big(\Om\times (C_{\min}\times\Xx_2)\big)=0,$ where $$C_{\min}:=\Ll\{N\in\Xx_1:\min_{t\in[-T_1,0]}N(t)<N_{\min}\Rr\}.$$
\end{proposition}
\begin{proof}
Since $\boldsymbol\mu\big(\Om\times (C_{\min}\times\Xx_2)\big)$ is the limit of (a subsequence of) the averages of $\Theta_t\nu\big(\Om\times (C_{\min}\times\Xx_2)\big)$, it is sufficient to show that these quantities tend to zero as $t$ goes to $+\infty.$. Thanks to Proposition \ref{prop:uniform_persistence_for_N}, if $m_0R_0c_0>2$ we have that $$\phi(t,\om)^{-1}(C_{\min})\subset\Ll\{N_r^0\in\Xx_1: \min_{t\in[-T_1,0]}N(t)<\eta(t)\Rr\},$$
for some $\eta(t)$ decreasing function converging to $0$ as $t$ goes to infinity. In particular, the measure $\nu$ of this set converges to zero; indeed, by definition, $\nu$-almost surely the functions $N_r^0$ are positive almost everywhere on $[-T_1,0]$. Thus, $\Theta_t\nu\big(\Om\times (C_{\min}\times\Xx_2)\big)\to0$ as $t\to\infty.$ Thus, we can conclude that $\boldsymbol\mu\big(\Om\times(C_{\min}\times\Xx_2)\big)=0$ by approximating the indicator function of the set $\Om\times(C_{\min}\times\Xx_2)$ with bounded continuous functions $f$.
\end{proof}

\section{Numerical study of the deterministic model and of its dependence on the integration step}\label{sec:numerical}
In this section, we extend and complement the numerical study of the deterministic model given by Equations~\eqref{eq:model_1}--\eqref{eq:model_3} performed in \cite{Arl19}. First, we describe the discretization of the model. We will partition the time interval $\Delta t = 1$, which corresponds to one year, with both an integer and a non-integer number of steps $q$\footnote{In the first case, we will obtain a deterministic dynamical system in a phase-space of dimension $2 \times (T_1 \times q + 1)$. In the second case, instead, the dimension will be $2 \times (T_1\times \lceil q\rceil+1)$}; see Subsection \ref{subsec:discretization_of_the_model}. Second, we describe how we set both the initial conditions and the parameters of the model; see Subsections \ref{subsec:initial_conditions} and \ref{subsec:parameters}, respectively. Third, in Subsection \ref{subsec:study_one} we focus on our main objective for this Section and we investigate the dependence of the attractor on the number of integration steps per time unit for two different sets of experimental parameters. The main result of this study is that the chaotic attractor is persistent under the dimensionality reduction process associated to choosing longer and longer integration time steps. In order to make this claim more precise and less qualitative, 
in Subsection \ref{subsec:distances} we introduce several statistical notions of distance between 
attractors and we numerically compute them for quantifying the distance of the "asymptotic" attractor (i.e.\ corresponding to the continuous time system, approximated by a very short integration time) and the one corresponding to the choice of a relatively large integration step.

\subsection{Discretization of the model}\label{subsec:discretization_of_the_model}
The phase-space of the original model is infinite dimensional. Here we describe how to numerically integrate it by using an integer number of steps $q$. The time parameter $t \in [0,+\infty)$ is replaced by indices $i \in \mathbb{N} \setminus \{0\}$; roughly, $i \geq 1$ replaces the interval $\left(\frac{i-1}{q}, \frac{i}{q}\right]$. We now explain the notation we are going to use; notice that the discretized values of the different functions at index $i$ do not correspond to their values on $t=\frac iq$ but, instead, to their integral or their average on the interval $\left( \frac{i-1}{q} , \frac{i}{q} \right]$.

\begin{enumerate}[label=(D\arabic*)]
\item\label{itm:D1} $N_{r,i} $ approximates the average of $N_r(t)$ over $t \in \left( \frac{i-1}{q} , \frac{i}{q} \right]$: $N_{r,i} \approx q \int_{\frac{i-1}{q}}^{\frac{i}{q}} N_r(t) \diff t$. Similarly, $N_{b,i}$, $\rhoseasondiscrete{i}$, $S_i$ and $P_i$ are constructed from $N_{b}(t)$, $\rhoseason (t)$, $S(t)$ and $P(t)$ as their average over the interval $t \in \left( \frac{i-1}{q} , \frac{i}{q} \right]$.
\item\label{itm:D2} $B_{r,i}$ approximates the number of newborn females put in the reproducing line 
in the interval $t \in \left( \frac{i-1}{q} , \frac{i}{q} \right]$: $B_{r,i} \approx \int_{\frac{i-1}{q}}^{\frac{i}{q}} B_r(t) \diff t$. The fact that this quantity is not divided by $q$ - differently from $N_r(t)$ - is because $B_r(t)$ is a density (number per unit of time), whereas $N_r(t)$ is the number of individuals.\\
\item The quantities $B_{b,i}$, $B_{f,i}$, $B_{m,i}$ are defined in the same fashion, respectively from $B_{b}(t)$, $B_{f}(t)$, $B_{m}(t)$.
\end{enumerate}

\noindent We construct the discretized model at any step $k \geq 1$ by computing 
\begin{equation}\label{eq:Z_k}
    Z_k := (N_{r,k} \, ; \, N_{b,k} \, ; \, S_k \, ; \, P_k \, ; \, B_{r,k} \, ; \, B_{b,k})
\end{equation}
from its past values 
$$ \paren{Z_i}_{ k - \max\{k_{A_1},k_{\Omega_1} \} \leq i \leq k-1},$$ 
in the following order:
\begin{align}
N_{r,k} &= \sum_{j= k_{A_0}}^{k_{A_1}} B_{r,k-j}\label{eq:discretized_N_r_k},\\
N_{b,k} &= \sum_{j= k_{\Omega_0}}^{k_{\Omega_1}} B_{b,k-j}\label{eq:discretized_N_b_k},\\
S_k     &= \frac{q N_{b,k}}{k_{\Omega_1}-k_{\Omega_0}+1}\label{eq:discretized_S_k},\\
P_k     &= \max\Ll\{ 0, P_{k-1} + \frac{\lambda P_{k-1} F \bigl( D(P_{k-1}),S_k \bigr)}{q} \Rr\}\label{eq:discretized_P_k},\\ 
B_{r,k} &= \frac{m_0}{q} \rhoseasondiscrete{k} m(N_{r,k}) R(P_k)\label{eq:discretized_B_r_k},\\
B_{b,k} &= \frac{m_0}{q} \rhoseasondiscrete{k} m(N_{r,k}) \bigl(2 - R(P_k)\bigr)\label{eq:discretized_B_b_k}.
\end{align}

\noindent Notice that the Equations~\eqref{eq:discretized_N_r_k}-\eqref{eq:discretized_N_b_k} and \eqref{eq:discretized_S_k} depend on the parameters $k_{A_0},k_{A_1},k_{\Om_0},k_{\Om_1}$: these are the discretized version of the parameters $A_0,A_1,\Om_0,\Om_1$ of the original dynamical system, but with some minor technical modifications needed to ensure the convergence of the numerical scheme (and that the discretization leads to a finite-dimensional dynamical system). Namely, we define:
\begin{enumerate}
\item $k_{A_0}=\max\{1, \croch{q A_0} \}$, where $\croch{x}$ is the closest integer to $x$. Notice that the  
$\max\{1,\cdot\}$ is because we want $k_{A_0} >0$ so that $N_{r,k}$ only depends 
on the past of $B_r$ in Equation~\eqref{eq:discretized_N_r_k}, otherwise we would have a circular definition.
\item $k_{A_1}=\max\{k_{A_0},\croch{q A_1}-1\}$. We take $\croch{q A_1}-1$ instead of $\croch{q A_1}$ because animals are supposed to die exactly at age $A_1$, so individuals of age $A_1$ at time $k/q$ (counted in $B_{r , k-\croch{q A_1}}$) do not count in $N_{r,k}$. 
At the same time, we want $k_{A_1}\geq k_{A_0}$ in order to make the sums defining $N_{r,k}$ and $N_{b,k}$ non-empty when $A_0$ is really close to $A_1$. 
\item $k_{\Omega_0}=\max\{1, \croch{q \Omega_0} \}$ is defined similarly to $k_{A_0}$.
\item $k_{\Omega_1}=\max\{k_{\Omega_0},\croch{q \Omega_1}-1\}$ is defined similarly to $k_{A_1}$.
\end{enumerate}
\paragraph{Non-integer number of steps} When $q\notin \N$, we partition in $\lceil q\rceil$ parts each interval $[n,n+1]$ for $n\in\N$. The first $\lfloor q\rfloor$ parts are long $\frac1{q},$ whereas the last one is long $\frac{\{q\}}{q},$ where $\{q\}$ denotes the fractional part of the number $q$. An example of this kind of partition for $q=5.5$ is shown in the following picture:
\scalebox{1.4}{
\begin{tikzpicture}
\node[scale=0.65] at (3,0.8)  {};
\node[scale=0.65] at (0,0.5)  {$i=0$};
\node[scale=0.65] at (1,0.5)  {$i=1$};
\node[scale=0.65] at (2,0.5)  {$i=2$};
\node[scale=0.65] at (3,0.5) {$i=3$};
\node[scale=0.65] at (4,0.5) {$i=4$};
\node[scale=0.65] at (4.9,0.5) {$i=5$};
\node[scale=0.65] at (5.65,0.5) {$i=6$};
\node[scale=0.65] at (0,-0.5)  {$t=0$};
\node[scale=0.65] at (1,-0.5)  {$t=\frac{1}{5.5}$};
\node[scale=0.65] at (2,-0.5)  {$t=\frac{2}{5.5}$};
\node[scale=0.65] at (3,-0.5) {$t=\frac{3}{5.5}$};
\node[scale=0.65] at (4,-0.5) {$t=\frac{4}{5.5}$};
\node[scale=0.65] at (4.9,-0.5) {$t=\frac{5}{5.5}$};
\node[scale=0.65] at (5.65,-0.5) {$t=1$};
\node[scale=1] at (-1,0)(L){};
\node[scale=1] at (-1.5,-0.013){$\cdots$};
\node[scale=1] at (6.5,0)(R){};
\node[scale=1] at (7,-0.013){$\cdots$};
\draw[->] (L)--(R) node [midway,above] {};
\draw (0,0.2) -- (0,-0.2) node [midway, above, sloped]{};
\draw (1,0.2) -- (1,-0.2) node [midway, above, sloped]{};
\draw (2,0.2) -- (2,-0.2) node [midway, above, sloped]{};
\draw (3,0.2) -- (3,-0.2) node [midway, above, sloped]{};
\draw (4,0.2) -- (4,-0.2) node [midway, above, sloped]{};
\draw (5,0.2) -- (5,-0.2) node [midway, above, sloped]{};
\draw (5.5,0.2) -- (5.5,-0.2) node [midway, above, sloped]{};
\end{tikzpicture}}
\noindent Also, it is necessary to modify the discretization of the functions $N_r, N_b, S, P$ and $m_\rho$ in \ref{itm:D1}. Indeed, for $i\neq 0 \mod \lceil q\rceil$ the quantity $N_{r,i}$ --and analogously $N_{b,i},S_{i},P_{i},m_{\rho,i}$-- is defined as the approximation of the following quantity: 
\begin{equation*}
    N_{r,i} \approx q \int_{t_i-\frac1q}^{t_i} N_r(t)\diff t,\quad t_i:=\left\lfloor \frac{i}{\lceil q\rceil}\right\rfloor+\left(i-\left\lfloor \frac{i}{\lceil q\rceil}\right\rfloor\right)\cdot\frac1q.
\end{equation*}
Instead, $i =0 \mod\lceil q\rceil$ the quantity $N_{r_i}$ approximates the following integral:
\begin{equation*}
    N_{r,i} \approx \frac q{\{q\}} \int_{t_i-\frac{\{q\}}q}^{t_i} N_r(t) \diff t,\quad t_i:=\left\lfloor \frac{i}{\lceil q\rceil}\right\rfloor.
\end{equation*}
\noindent Finally, in order to satisfy this approximation requirement, we re-weight the components of $Z_k$ in Equation~\eqref{eq:Z_k} dividing them by $\{q\})$ whenever $k$ is a multiple of $\lceil q\rceil$.

\subsection{Initial conditions}\label{subsec:initial_conditions}
The initial conditions are given by the values of the number of newborns ($B_{r,i}$ and $B_{b,i}$) 
for time steps $i=1,\ldots, k_{A_1}$ before the start of the simulation, since the discretized model in Subsection \ref{subsec:discretization_of_the_model} only needs the past values of $B_r$ and $B_b$ to compute the future dynamics of the model. For each parameter set, we choose an initial condition randomly as follows: $B_{r,1} ,\ldots, B_{r,k_{A_1}} , B_{b,1} , \ldots, B_{b,k_{A_1}} $  are chosen independently with common distribution $ \mathcal{U}\paren{ \croch{ 0 , \frac{2}{k_{A_1} - k_{A_0} + 1} }} $.
The reason for this choice is that $k_{A_1} - k_{A_0} + 1$ is the number of time steps 
corresponding to reproducing ages of females. So, for instance, the reproducing female population at time $k_{A_1}+1$ is the average of random variables uniform over $[0,2]$, so it should be close to 1, the threshold value for density-dependence to apply.

\subsection{Parameters of the model}\label{subsec:parameters}
We follow \cite{Arl19}, and we fix the functions $m, m_\rho,D$ and $R$ in such a way that they have to satisfy the requirements of Section \ref{sec:first_analysys}. In particular in our experiments we make the following choices:
\begin{enumerate}[label=(F\arabic*)]
\item\label{itm:F1}\textbf{The fertility function.} We take:
$$m(N) = m_0 \bigl( \max \{N,1\} \bigr)^{-\gamma},$$
where $\gamma$ and $m_0$ are parameters of the model to be chosen.
\item\label{itm:F2}\textbf{Seasonality.} We let
$$\rhoseason(t) = \frac{1}{1-\rho} \mathsf{1}_{\{t - \lfloor t \rfloor \in [0,1-\rhowinter)}\}.$$
In this way, we model the fact that the births are synchronized and concentrated in one specific period of the year.
\item\label{itm:F3} \textbf{Demand function.} We choose the following demand function 
\begin{equation*} \label{eq:demand_function}
D(P) = D_{\exp}(P) := D_0 e^{ -\alpha_D P } 
\end{equation*}
for some parameters $D_0,\alpha_D>0$.
\item\label{itm:F4} 
\textbf{Breeder strategy:} We assume that the breeder follows a counter-cyclical policy and takes
$R$ close to 1 when the price is high. In the numerical experiments we choose 
\begin{equation}\label{eq:breeder_logistic} 
    R(P) = R_{\mathrm{logistic}}(P) := R_0 + (R_1-R_0) f_d(P/P_0) 
\end{equation}
where 
\[
f_d(x) = \begin{cases}
\frac{x^d}{2}                        &\qquad \mbox{if } x \in [0,1) \\
\frac{1}{1 + \exp\paren{-2 d (x-1)}} &\qquad \mbox{otherwise} \end{cases}
\]
and $R_0, R_1 \in [0,1]$ and $P_0, d > 0$ have to be chosen. This is a sigmoid function between $0$ and $+\infty$.
\end{enumerate}
Finally, for the sake of simplicity, we took the parameters $\Omega_{0}$ and $\Omega_{1}$ in the supply function to be equal to $A_{0}$ and $A_{1}$,
respectively. The normalization of the supply is $\Delta\Omega=\Omega_1-\Omega_0=A_1-A_0$.

\subsection{Dependence of the attractor on the discretization time step $q$}\label{subsec:study_one}
The aim of this section is to study the dimensionality reduction of the dynamical system in Equations~\eqref{eq:model_1}--\eqref{eq:model_3} as a function of the number $q$ of steps per year. In order to do so, we visualize attractor of the deterministic model by plotting in $\mathbb{R}^{3}$ the set $C(t) := (N_r(t), N_r(t-1), N_r(t-2)$, $t \in \mathbb{N}$, for different values of $q$. The goal will be to see how much the geometric characteristics of the attractor will change by passing from, e.g., $q=100$ to, e.g., $q = 2$. Should the chaotic dynamics be maintained, this would be a remarkable result. Indeed we would pass from a dynamical system with a dimension of the phase-space equal to $200$ to a dynamical system with a dimension of the phase-space equal to $8$ and, following \cite{YB98}, for $t \in [290000, 300000]$. We define the first set of experimental parameters, say $\mathcal{H}_1$, in the following way:
\begin{enumerate}[label=(S.1.\arabic*)]
\item\label{itm:S1} \textbf{Population dynamics:} $A_0= 0.18$, $A_2= 2.0$, $m_0= 5.0$, $\gamma=8.25$ and $\rho = 0.79$.
\item\label{itm:S2} \textbf{Market dynamics:} $\lambda = 1$ and the demand function is $D=D_{\exp}$ with $D_0 = 5$ and $\alpha_D =1$.
\item\label{itm:S3} \textbf{Interaction between population and market:} $\Omega_0=0.18$, $\Omega_1=2$, and $R=R_{\mathrm{logistic}}$ with $R_0 = 0$ (\emph{minimal value}), $R_1=1$ (\emph{maximal value}), $P_0=1$ (\emph{price threshold}) and $d=4$ (\emph{degree of $R(P)$ for small $P$}).
\end{enumerate}
\noindent This set is called SP in \cite{Arl19} and is close to the main setting for the Birkeland-Yoccoz model numerically solved in \cite{Arl04}. The left (resp.~right) panel of Figure~\ref{fig:attractors_1} displays the dynamics of the set $C(t)$ associated to $q=100$ (resp.~to $q=2$) steps per year. Interestingly, several skeleton of the attractor and some of its structure survives. On the other hand, the geometric structure of the set $C(t)$ is undoubtedly simplified with respect to the set depicted on the left-panel of the same figure. Also, it presents some --a priori unexplained-- stronger accumulation towards the value of zero.

some of the geometrical features are preserved; indeed, even with $q=2$ there are still some of the spikes and of the edges of the three dimensional projection with $q=100$. 

\begin{figure}[h]
\centering
\includegraphics[width=0.495\linewidth]{fig1.png}
\includegraphics[width=0.495\linewidth]{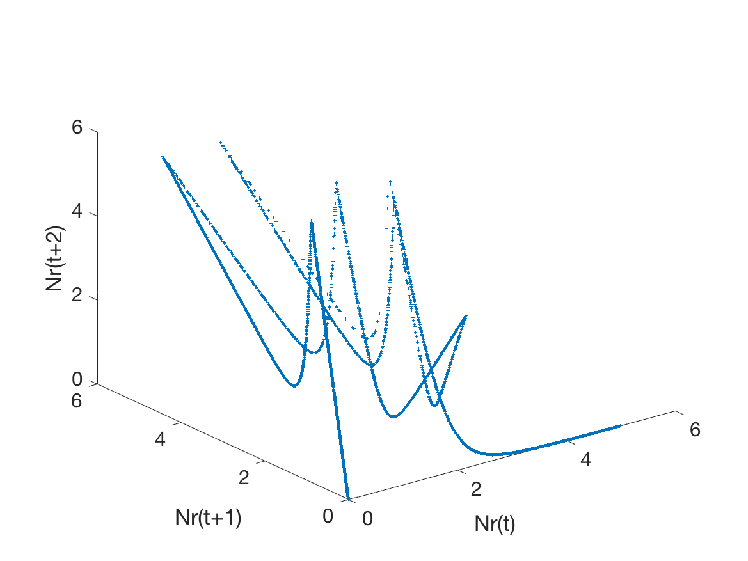}
\caption{Three dimensional plot of $C(t)=(N_r(t), N_r(t+1), N_r(t+2))$ of the deterministic dynamical system in Equations~\eqref{eq:model_1}--\eqref{eq:model_3}. Setting $\mathcal{H}_1$. \emph{Left Panel:} $q = 100$. \emph{Right Panel:} $q = 2$.}
\label{fig:attractors_1}
\end{figure}

Actually, the dimension of the dynamical system's phase-space with $q=2$ can be lowered to 5; indeed, the following lemma holds true.
\begin{lemma}
The discretized dynamics that arise by choosing $q=2$ with the set of experimental parameters $\Hh_1$ below can be modeled through a dynamical system whose phase-space dimension is equal to $5$.
\end{lemma}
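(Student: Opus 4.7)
The plan is to exploit the special parameter set $\mathcal{H}_1$—in particular the value $\rho=0.79$ combined with $q=2$—to show that many coordinates of the discretized state vector vanish identically after one year of evolution, so that the system lives on a five-dimensional slice of its natural phase space.

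First, I would pin down the discretization constants in \eqref{eq:discretized_N_r_k}--\eqref{eq:discretized_B_b_k}. Since $A_0=\Omega_0=0.18$ and $A_1=\Omega_1=2$, one reads off $k_{A_0}=k_{\Omega_0}=1$ and $k_{A_1}=k_{\Omega_1}=3$, so that $N_{r,k}$ and $N_{b,k}$ are three-term sums over $B_{r,k-1},B_{r,k-2},B_{r,k-3}$ and $B_{b,k-1},B_{b,k-2},B_{b,k-3}$ respectively. Since $N_{r,k}$, $N_{b,k}$, $S_k$ and $P_k$ are all algebraic functions of these past births and of $P_{k-1}$, the a priori state of the system is the seven-tuple
\[
s_k=(B_{r,k-1},B_{r,k-2},B_{r,k-3},B_{b,k-1},B_{b,k-2},B_{b,k-3},P_{k-1}).
\]

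Second, I would compute the discretized seasonality $m_{\rho,k}$ explicitly. By \ref{itm:F2} and $\rho=0.79$, the function $m_\rho$ is supported on the interval of length $1-\rho=0.21$ at the beginning of each year, which is strictly contained inside the first of the two $q=2$ half-year sub-intervals. Averaging $m_\rho$ over these sub-intervals therefore yields the alternating pattern $m_{\rho,k}=2$ for odd $k$ and $m_{\rho,k}=0$ for even $k$. Substituting into \eqref{eq:discretized_B_r_k}--\eqref{eq:discretized_B_b_k} immediately gives $B_{r,k}=B_{b,k}=0$ for every even $k>k_{A_1}$. This structural cancellation is the key observation of the proof.

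Third, I would match parities in the a priori seven-tuple. When $k$ is even the indices $k-1$ and $k-3$ are odd while $k-2$ is even, so $s_k$ collapses to the five-tuple $(B_{r,k-1},B_{r,k-3},B_{b,k-1},B_{b,k-3},P_{k-1})$; when $k$ is odd only three coordinates survive. Composing the one-step maps $s_k\mapsto s_{k+1}\mapsto s_{k+2}$ and restricting to the Poincaré section at even $k$ then produces an autonomous map on $\mathbb{R}^5_{+}$ that models the entire asymptotic dynamics of the discretized system.

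The main technical point requiring care is the integration of $m_\rho$ across the two half-year sub-intervals: the reduction relies crucially on the strict inequality $1-\rho<1/2$, which is what forces the support of $m_\rho$ to sit entirely inside a single $q=2$ sub-interval and hence $m_{\rho,k}$ to vanish at even $k$. Were this inequality to fail, the $B$-variables would not vanish at even $k$ and the collapse to five dimensions would break down; once the inequality is in hand, verifying that the Poincaré sectioning yields a genuinely autonomous map follows at once from the $2$-periodicity of the pattern $m_{\rho,k}=2,0,2,0,\dots$
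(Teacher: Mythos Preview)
Your argument is correct and follows essentially the same route as the paper: both proofs hinge on the observation that, with $q=2$ and $\rho>1/2$, the discretized seasonality $m_{\rho,k}$ alternates between $2$ and $0$, forcing half of the birth terms to vanish and collapsing the effective state to five coordinates. The only cosmetic difference is the choice of coordinates for the five-dimensional section---you parametrize by the surviving births $(B_{r,k-1},B_{r,k-3},B_{b,k-1},B_{b,k-3},P_{k-1})$, whereas the paper uses $(N_{r,k-2},N_{r,k},P_{k-2},P_{k-1},P_{k})$---and your parity assignment (nonzero at odd $k$) is in fact the one consistent with the paper's indexing convention in \ref{itm:D1}.
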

\begin{proof}
First, we notice that $m_{\rho,k}$ consists of a sequence of $2$'s for $k$ even and of zeros for $k$ odd. Also, we point out that the values of $k_{A_0}$ and $k_{\Om_0}$ are $1$ and the values of $k_{A_1}$ and $k_{\Om_1}$ are $3$. As a consequence, in the summations defining $N_{r,k}$ and $S_k$ (see Equations~\eqref{eq:discretized_N_r_k} ans \eqref{eq:discretized_S_k}) the terms $N_{r,i}$ and $P_i$ for $i$ odd do not matter, since they are multiplied by $m_{\rho,i}$ which is zero for $k$ odd. Also, $P_k$ in Equation~\eqref{eq:discretized_P_k} depends only on $P_{k-1}$ and on $S_k,$ which in turn depends on the values of $N_{r,i}$ and $P_i$ for even $i$. Therefore, for $q=2$, we can construct a dynamical system consisting only of the couple $(N_{r,k},N_{r,k-2})$ and of the prices $(P_k,P_{k-1},P_{k-2})$ whose evolution is given by
$$\binom{N_{r,k-2},\,N_{r,k}}{P_{k-2},\,P_{k-1},\,P_k}\mapsto \binom{N_{r,k},\,\frac{m_0}qm_{\rho,k}m\big(N_{r,k}R(P_k)\big)}{P_k,\,0\vee P_{k}+\frac1q\big(\lambda P_{k}F(F(P_{k}),S_{k+1})\big),\,P_{k+2}}.$$
Notice that in the previous equation we pointed out that $S_{k+1}$ can be obtained from $N_{r,k},N_{r,k-2},P_k$ and $P_{k-2}$, and that $P_{k+2}$ can be obtained from the value of $$P_{k+1}=0\vee P_{k-1}+\frac1q\big(\lambda P_{k-1}F(D(P_{k-1}),S_k)\big)$$ and from that of $S_{k+2}$, which can be computed from $N_{r,k}$ and $P_k.$
\end{proof}

\indent A more interesting phenomenon is shown in Figure~\ref{fig:attractors_2}. The left panel displays the dynamics of the set $C(t)$ associated to $q=10$ steps per year: as expected, the geometry of the set stands between the one with $q=2$ and $q=100$. However, the right panel shows the possibility that $C(t)$ exhibits, for some values of $q$, only a low-complexity seemingly non-chaotic orbit, very close to being periodic and totally different also from the original Yoccoz-Birkeland attractor. In particular, the bifurcation diagram of $N_r(t)$, $290000 \leq t \leq 300000$, as a function of the time discretization parameter $q \in [1, 40]$ in Figure~\ref{fig:bifurcation} shows that the presence of a quasi-periodic orbit is more frequent for small values of $q$.

\begin{figure}[!h]
\centering
\includegraphics[width=0.495\linewidth]{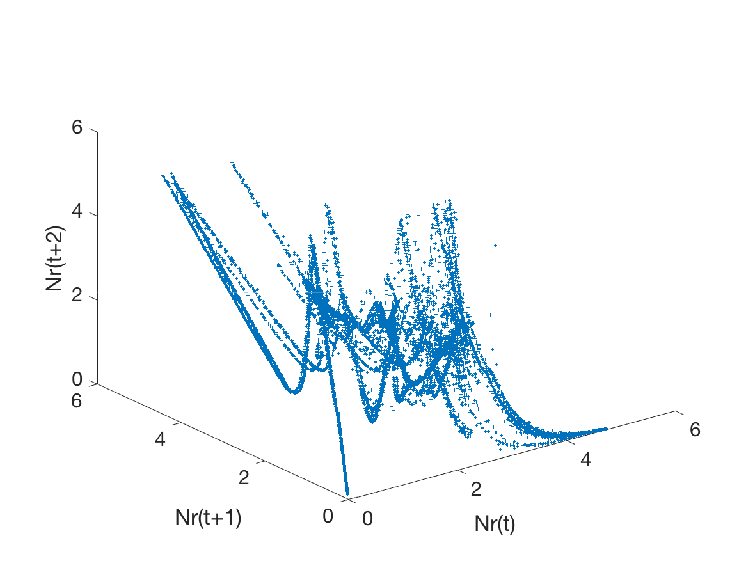}
\includegraphics[width=0.495\linewidth]{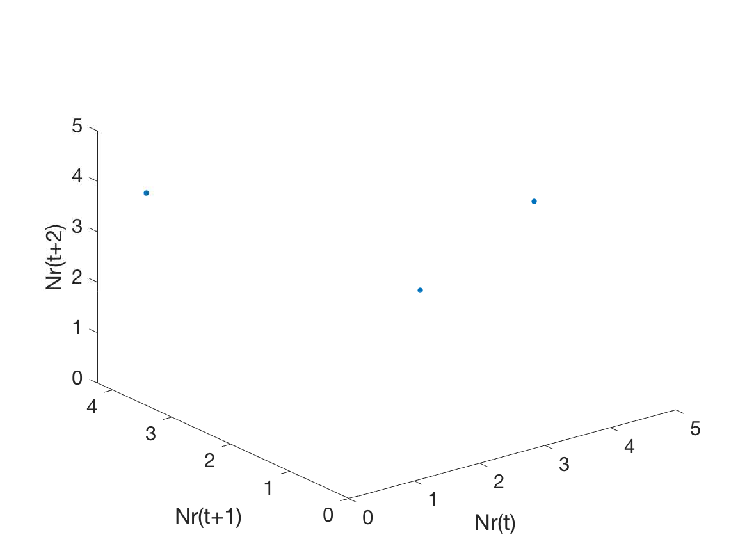}
\caption{Three dimensional plot of $C(t)=(N_r(t), N_r(t+1), N_r(t+2))$ of the deterministic dynamical system in Equations~\eqref{eq:model_1}--\eqref{eq:model_3}. Setting $\mathcal{H}_1$. \emph{Left Panel:} $q = 10$. \emph{Right Panel:} $q = 20$.}
\label{fig:attractors_2}
\end{figure}

\begin{figure}[!h]
\centering
\includegraphics[width=1\linewidth]{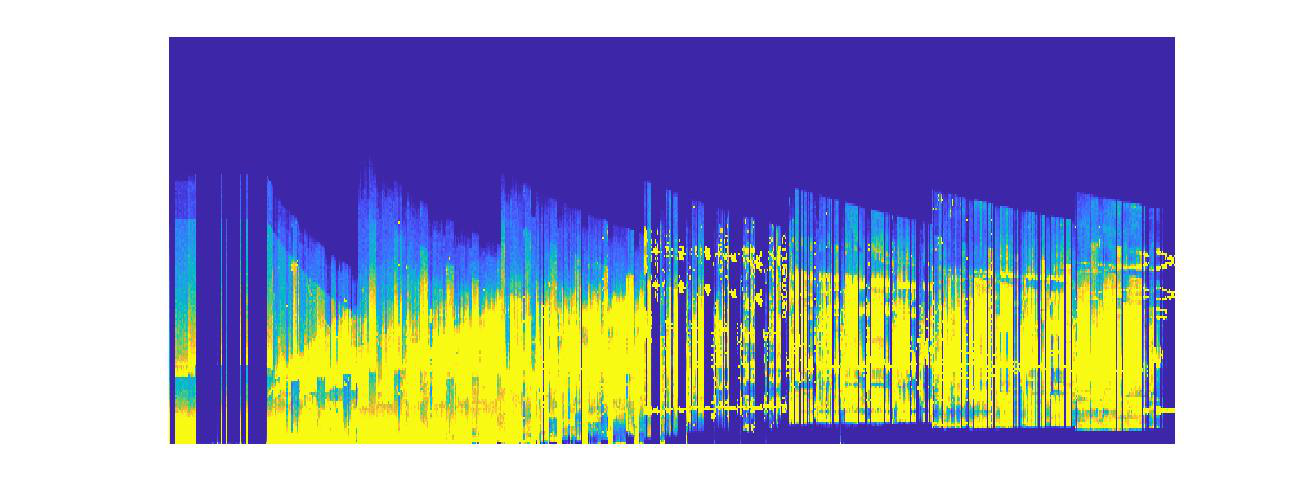}
\caption{Bifurcation diagram for $N_r(t)$, $290000 \leq t \leq 300000$, as a function of the parameter $q \in [1, 40]$. Setting $\mathcal{H}_1$.}
\label{fig:bifurcation}
\end{figure}

\subsubsection{Comparison with setting $\mathcal{H}_1$: small values of $q$ with a chaotic dynamics}\label{subsec:study_two}
We tune the set of experimental parameters $\mathcal{H}_1$ until the three-dimensional plots of $(N_r(t), N_r(t+1), N_r(t+2))$ for small and high values of $q$ look very similar. In particular, we name $\mathcal{H}_2$ this set of new experimental parameters. With respect to $\mathcal{H}_1$, we modify the following parameters : 
\begin{enumerate}[label=(S.2.\arabic*)]
\item\label{itm:S11} $A_0 = \Omega_0 = 0.8$, which corresponds to a longer period of growth of the cattle before they reach fertility.
\item\label{itm:S22} $m_0=10$. 
\item\label{itm:S33} $\gamma=7$.
\item\label{itm:S44} The degree $d$ of the $R_{\mathrm{logistic}}$ function is set equal to $2$.
\item\label{itm:S55} The coefficient $\alpha_D$ of the $D$ function is set equal to $2$.
\end{enumerate}

The left (resp.~right) panel of Figure~\ref{fig:attractors_3} displays the dynamics of the set $C(t)$ associated to $q=3$ (resp.~to $q=100$) steps per year in the set of experimental parameters $\mathcal{H}_2$. We make the following considerations. First, the accumulation towards the value of zero is preserved for small values of $q$. Second, even for such a values there exists a non trivial geometric structure that is similar to the one observed for $q=100$. Also, we find that for only a few values of $q$ the set $C(t)$ exhibits an orbit very close to being periodic; see Figure~\ref{fig:attractors_4}, left panel, for a graphical representation of this statement. As an example, Figure~\ref{fig:attractors_4}, right panel, displays the quasi-periodic orbit when $q=10.5$.   

\begin{figure}[h]
\centering
\includegraphics[width=0.495\linewidth]{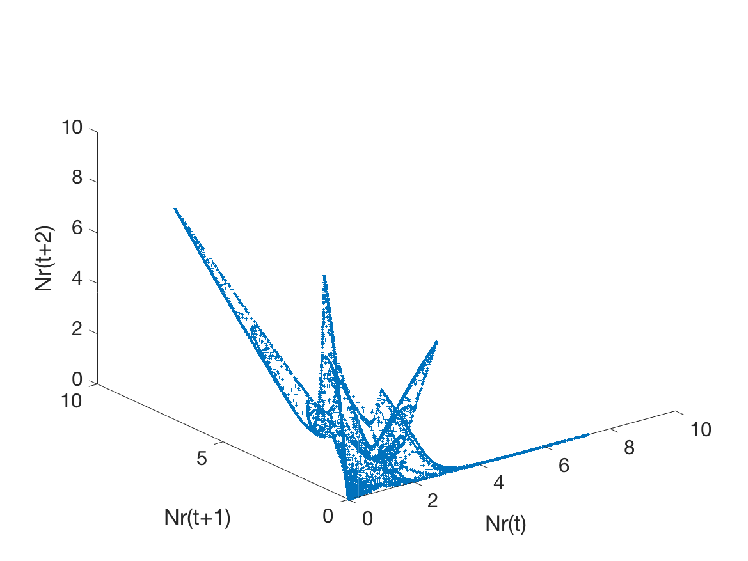}
\includegraphics[width=0.495\linewidth]{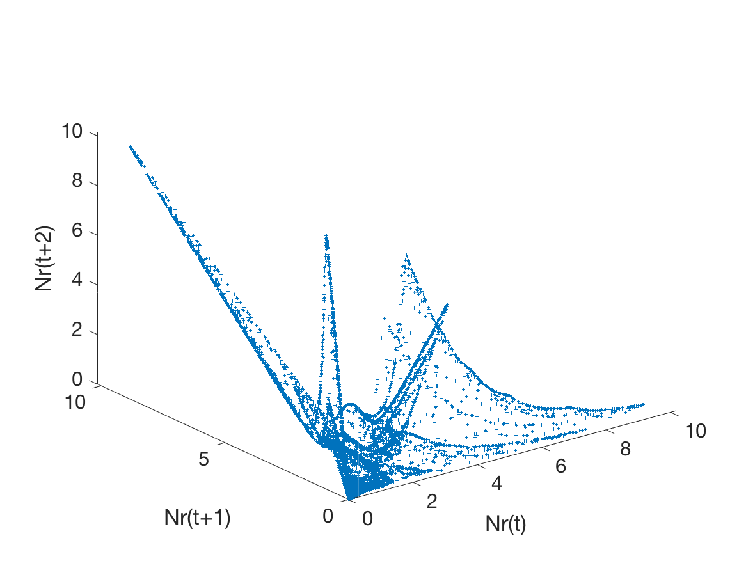}
\caption{Three dimensional plot of $C(t)=(N_r(t), N_r(t+1), N_r(t+2))$ of the deterministic dynamical system in Equations~\eqref{eq:model_1}--\eqref{eq:model_3}. Setting $\mathcal{H}_2$. \emph{Left Panel:} $q = 2$. \emph{Right Panel:} $q = 100$.}
\label{fig:attractors_3}
\end{figure}

\begin{figure}[!h]
\centering
\includegraphics[width=0.495\linewidth]{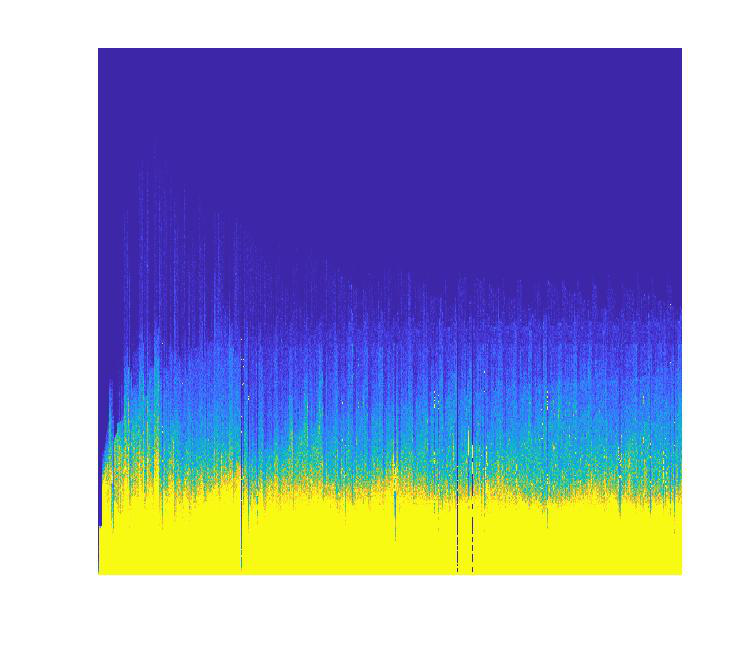}
\includegraphics[width=0.495\linewidth]{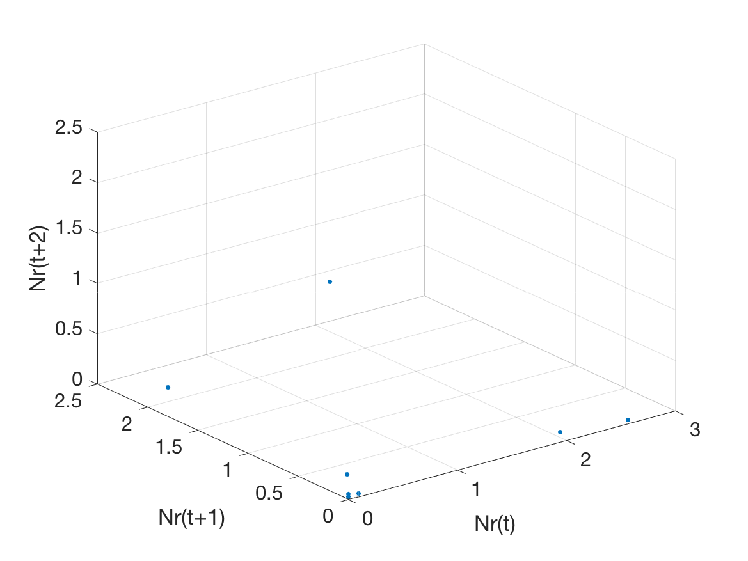}
\caption{\emph{Left Panel:} Bifurcation diagram for $N_r(t)$, $290000 \leq t \leq 300000$, as a function of the parameter $q \in [1, 40]$. Setting $\mathcal{H}_2$. \emph{Right Panel:} Three dimensional plot of $(N_r(t), N_r(t+1), N_r(t+2))$ of the deterministic dynamical system in Equations~\eqref{eq:model_1}--\eqref{eq:model_3}. Setting $\mathcal{H}_2$ and $q = 10.5$.}
\label{fig:attractors_4}
\end{figure}

\subsection{Measuring distances between attractors}\label{subsec:distances}
By comparing Figure~\ref{fig:attractors_1} with Figure~\ref{fig:attractors_3}, it is visually clear that the reduction in the number of integration steps $q$ distorts less the geometry of $C(t)$ in the case of the set of experimental parameters $\mathcal{H}_2$. In order to make this statement more rigorous, we use a number of \emph{Statistical Distances} proposed in the literature, i.e. a number of distances between measures defined on a metric space $(X,\delta )$. In particular, each distance is approximated by the distance between two histograms (i.e. between two vectors of the same length with positive entries that sum-up to one) constructed from the corresponding attractors. Precisely, we construct the histograms by measuring the density of the points constituting the plot of the attractors over a three dimensional grid $X$ of dimension $n_g \times n_g\times n_g:$ for computational reasons, we used $n_g=23$. Henceforth, we denote by $r$ the histogram associated to the attractor obtained with $q=100$ and we call this attractor \emph{asymptotic} attractor, and by $c_i$, $i = 1, \ldots, 100$, the histograms associated to the attractors obtained with $q \in [2, 100]$, on a pseudo-logarithmic scale. We use the following statistical distances between two given histograms $P$ and $Q$: 
\begin{enumerate}[label=(D\arabic*)]
\item\label{itm:D1} Kullback-Leibler divergence (\cite{KL51}). It is defined as
\begin{equation*}
    D_{KL}(P || Q) := \sum_{x \in X} P(x)\log\left(\frac{P(x)}{Q(x)}\right).
\end{equation*}
\item\label{itm:D2} Jensen-Shannon distance (\cite{WYG85}). It is defined as the smoothed and symmetrized version of the Kullback-Leibler, i.e.:
\begin{equation*}
    D_{JS}(P || Q) := \frac{1}{2}\left(D_{KL}(P || M) + D_{KL}(Q || M) \right)\quad\text{with}\quad M = \frac{1}{2}\left(P + Q \right).
\end{equation*}
\item\label{itm:D3} The Wasserstein distance (\cite{AMB05}). It is defined as:
\begin{equation*}
    W(P, Q):= \left(\inf_{\gamma \in \Gamma(P,Q)} \int_{X \times X} \delta (x, y)^2\,d\gamma(x, y)\right)^{1/2},
\end{equation*}
where $\Gamma(P, Q)$ denotes the collection of all measures on $X \times X$ with marginal distributions $P$ and $Q$ on the first and second factors respectively. 
\item\label{itm:D4} A regularized version of the Sinkhorn distance proposed by \cite{CU13}. It is defined in the following way. Let $r$ and $c$ be two given histograms in the simplex $\Sigma_d:= \{x\in \R^d_+: x^T\mathsf{1}_d=1\}$. Let $U(r,c)$ be the transportation polytope of $r$ and $c$, namely the polyhedral set of $d\times d$ matrices
$$U(r,c):= \{P\in\R_+^{d\times d}\; |\; P\mathsf{1}_d=r, P^T\mathsf{1}_d=c\},$$
where $\mathsf{1}_d$ is the $d$ dimensional vector of ones. 
Given a $d\times d$ cost matrix $M$, the cost of mapping $r$ to $c$ using a transportation matrix $P$ can be quantified as $\langle P,M\rangle, := \textrm{Tr}\, (P^TM)$ the Frobenius product between $P$ and $M$. The following minimization problem:
$$
D_M(r,c) := \min_{P\in U(r,c)} \langle P,M\rangle
$$
is called the \emph{optimal transportation} problem between $r$ and $c$, given the cost $M$, whereas $$
D^\lambda_M(r,c) := \min_{P\in U(r,c)} \langle P,M\rangle-\frac1\lambda h(P)
$$
is an approximated version of the problem above, where $h$ is the Shannon entropy, and $D^\lambda_M(r,c)$ is called {\it Sinkhorn distance}. Both $D_{M}$ and $D^\lambda_M$ are distances between two histograms; see \cite{CU13}. Last but not least, \cite{CU13} proposes a computationally efficient algorithm --called Sinkhorn-Knopp
Algorithm-- to compute the distance $D_M^{\lambda}$ between a given histogram $r$ and a set of histograms $\{c_i\}_{i \in I}$, as well as a lower bound on the real optimal transport distance.
\end{enumerate}

\noindent We discuss now the results. Figure~\ref{fig:distances_1}, left (resp.~right) panel, plots the distances just presented for the set of experimental parameters $\mathcal{H}_1$ (resp.~$\mathcal{H}_2$). For both settings, we notice that the distance is a decreasing function of $q$, correspondingly to the qualitative assessment that for higher values of $q$ we obtain figures that are more and more similar to the asymptotic one. However, we sometimes observe some spikes; they correspond to the values of $q$ for which the orbit is quasi-periodic. In addition, notice that these spikes are more sporadic in the setting $\mathcal{H}_2$, thus confirming what we observed in Section~\ref{subsec:study_one}. Moreover, the Sinkhorn distance (i.e. the transportation cost) for low values of $q$ is much higher for the setting $\mathcal{H}_1$ than for the setting $\mathcal{H}_2$. This confirms the visual insight that the latter setting is much more stable than $\mathcal{H}_1$ for small values of $q$, i.e. it leads to attractors which have a more gentle dependence on $q$. These considerations are also corroborated by Figure~\ref{fig:entropy_and_fractals}, which reports, as function of $q$, the entropy and the fractal dimension computed following the same methodology as in \cite{Arl19}: we postpone the discussion of these quantities to Section~\ref{subsec:entropy_and_fractal_random}. Finally, we remark that we also test the reliability of the previous methodology on the logistic map; see Appendix~\ref{app:logistic_attractors}.  

\begin{figure}[!h]
\includegraphics[width=0.50\textwidth]{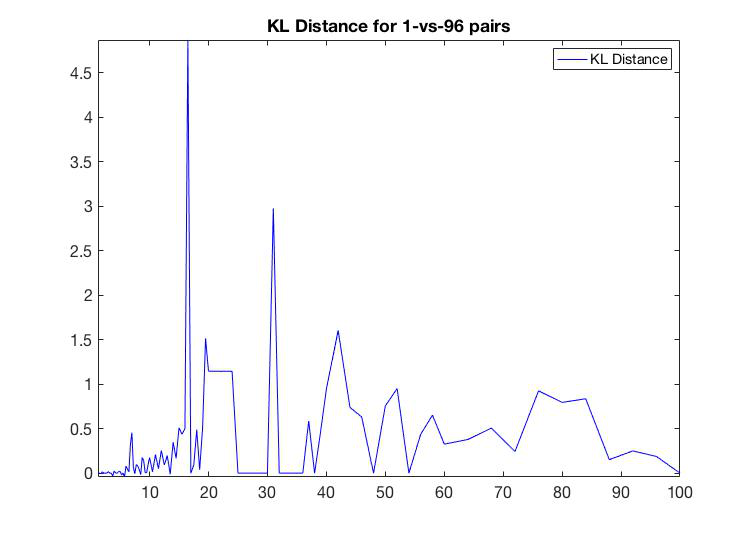}
\includegraphics[width=0.50\textwidth]{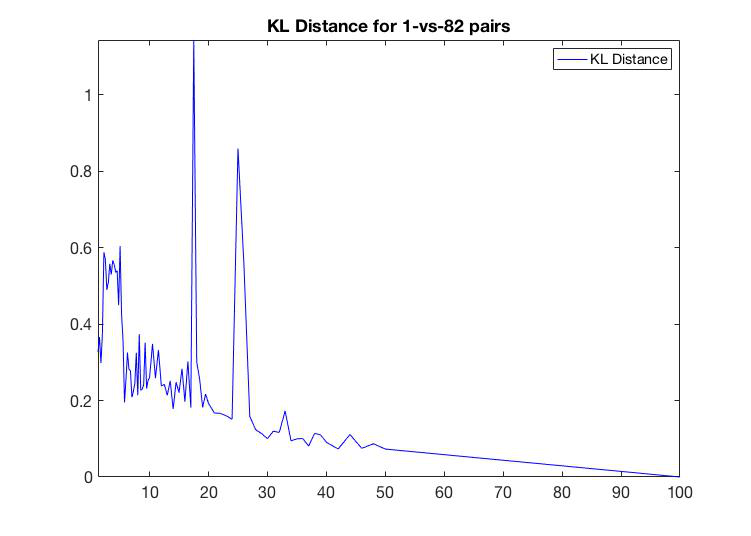}
\includegraphics[width=0.50\textwidth]{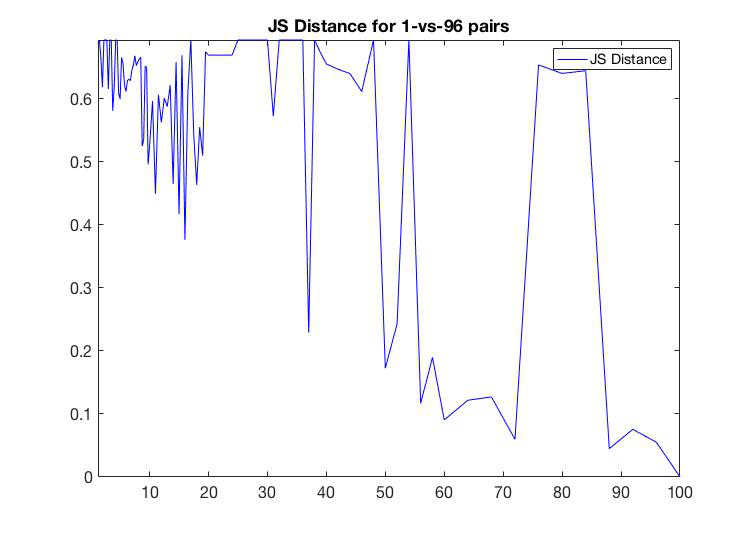}
\includegraphics[width=0.50\textwidth]{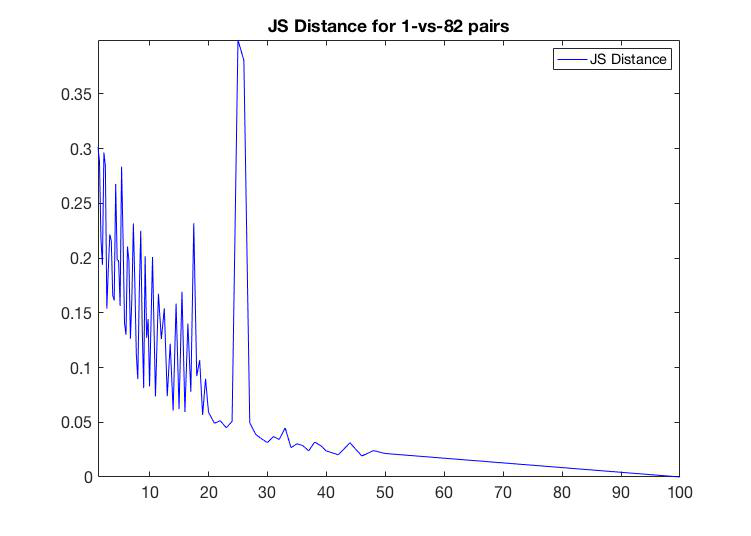}
\includegraphics[width=0.50\linewidth]{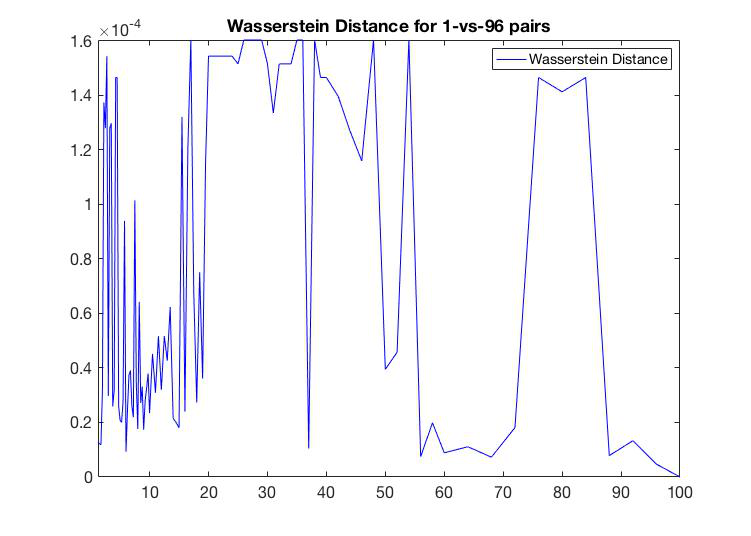}
\includegraphics[width=0.50\linewidth]{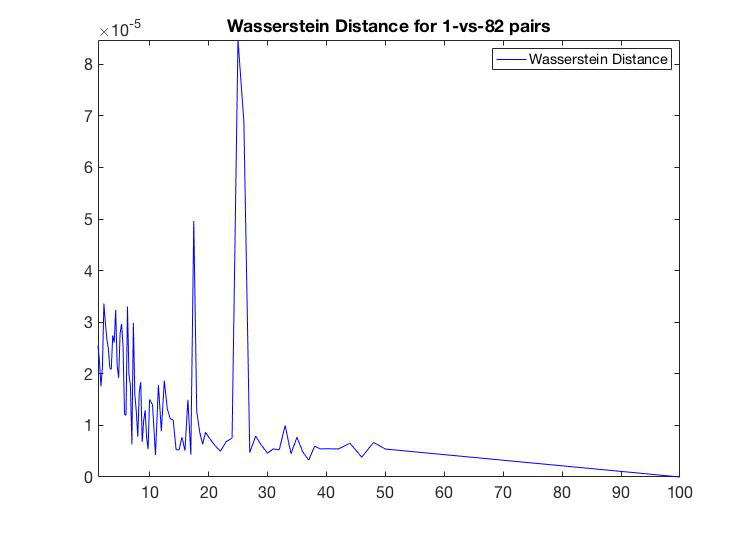}
\includegraphics[width=0.50\linewidth]{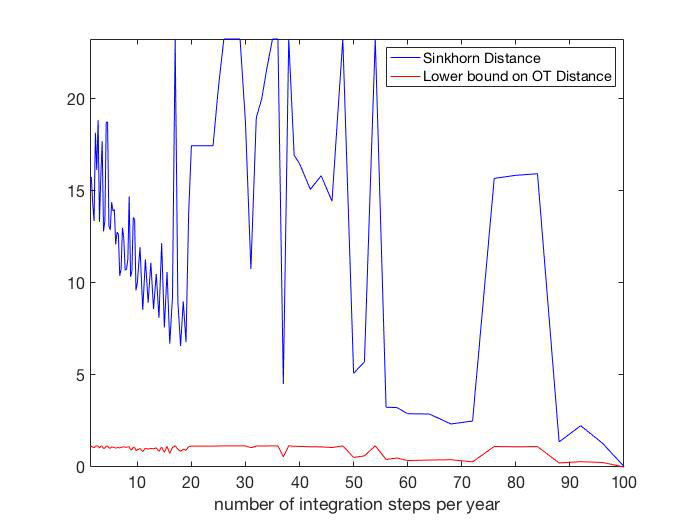}
\includegraphics[width=0.50\linewidth]{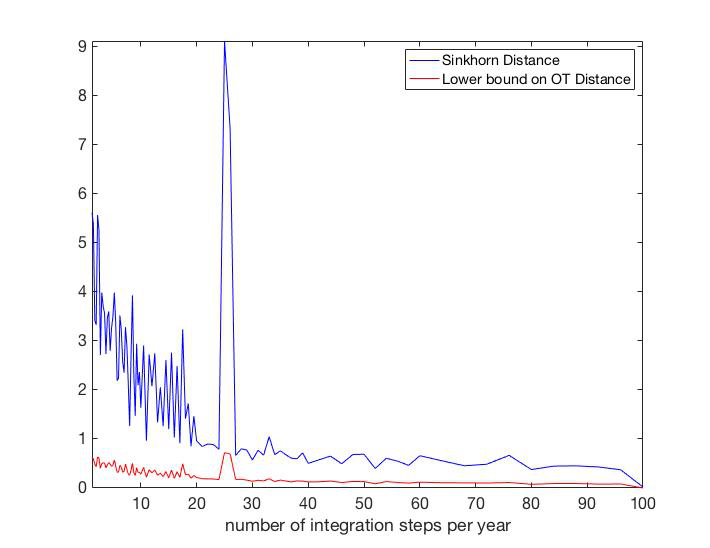}
\caption{\emph{From top to bottom, left panel:} distances between the attractors as a function of $q \in [2, 100]$ on a pseudo-logarithmic scale for the set of experimental parameters $\mathcal{H}_1$. \emph{From top to bottom, right panel:} the same quantities for the set of experimental parameters $\mathcal{H}_2$. The distances are, in order: the Kullback-Leibler divergence, the Jensen-Shannon distance, the Wasserstein distance and the Sinkhorn distance.}
\label{fig:distances_1}
\end{figure}

\begin{figure}[!h]
\includegraphics[width=0.52\textwidth]{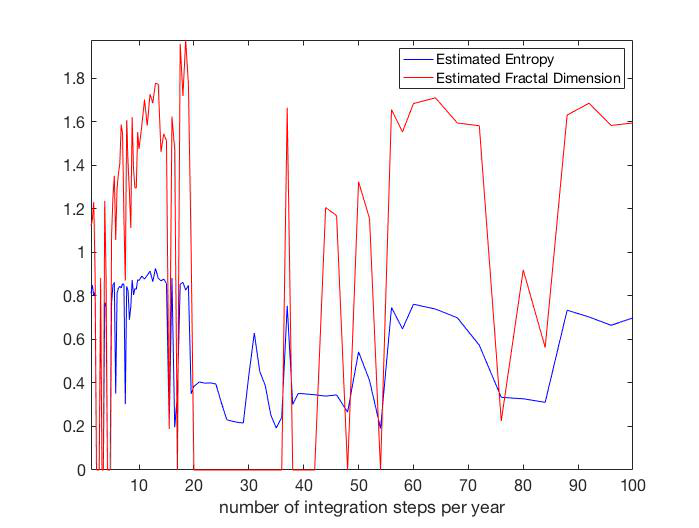}
\includegraphics[width=0.52\textwidth]{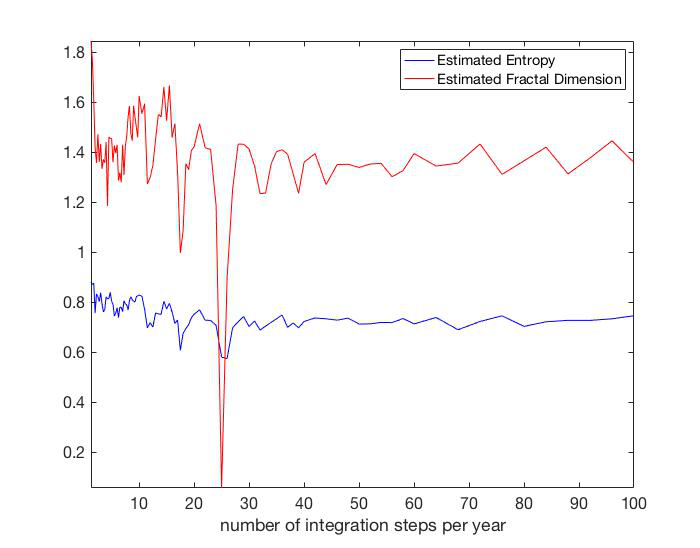}
\caption{Estimated fractal dimension (\emph{left panel}) and entropy (\emph{right panel}) for different values of $q \in [2, 100]$, on a pseudo-logarithmic scale.}
\label{fig:entropy_and_fractals}
\end{figure}

\section{Numerical study of the random model}\label{sec:raandom_num}
In this section, we will consider the random model given by Equations~\eqref{eq:model_1_prime}--\eqref{eq:model_3_prime} and we perform some numerical experiments on it. 
In Section~\ref{subsec:entropy_and_fractal_random} we investigate the chaotic nature of the 
attractor of the random dynamical system by computing the entropy and its fractal dimension. In Section~\ref{subsec:attractors_random} we show some plots of the random attractors as in Definition~\ref{def:globally}.

In order to numerically solve equations \eqref{eq:model_1_prime}--\eqref{eq:model_3_prime}
we need to follow a discretization scheme different from the one described in 
Subsection~\ref{subsec:discretization_of_the_model}
and used for the deterministic model ~\eqref{eq:model_1}--\eqref{eq:model_3}.
Indeed we need to discretize differently the dynamics of the price $(P(t))$: in particular, we use the finite-difference Euler-Maruyama method (\cite{MA55}) for the numerical solution of the SDE in Equation~\eqref{eq:model_2_prime}.

\subsection{Entropy and fractal dimension}\label{subsec:entropy_and_fractal_random}
In this section, we investigate how the metric entropy and the fractal dimension of the attractor change
as the volatility $\sigma$ varies when the parameters of the deterministic dynamical system are fixed as both in $\mathcal{H}_1$ and $\mathcal{H}_2$; see Subsection~\ref{subsec:study_one}. For the sake of clarity, we briefly describe how the entropy and the fractal dimension are computed; cfr.~\cite{Arl19}, Appendix A.3 and Appendix A.4.\\
\indent In order to compute the entropy, we start from the ``continuous'' time series of $N_r(t)$ --the reasoning can be replicated also for the price-- for $t>T_{\max}-10000$, where $T_{\max}$ is the total length of the simulation experiment i.e., $T_{\max} = 300000$. Then, we compute the empirical auto-correlation function of $N_r$ with lags $\tau \in [0,100]$ ($\tau$ is expressed in years). We get its first ``zero'' $\tau^{\star}$ as the first point where the empirical auto-correlation crosses zero, and we then check that the value of the empirical auto-correlation at $\tau=\tau^{\star}$ is smaller than an arbitrary threshold (here, $10^{-2}$). After that, we consider the sequence $Y(t)$ only at times $t$ equal to an integer multiple of $\tau^{\star}$, and compute the returns 
$$\log_{10} \Ll(\frac{Y\big((i+1)\tau^{\star}\big)}{ Y( i \tau^{\star})}\Rr)$$
and their respective signs $\eps_i \in \{-1,1\}$. 
From this (finite) sequence of signs, we compute for $K=1, \ldots, 12$ the combinatorial entropy $H_K (\tau^{\star})$ of 
$( (\varepsilon_i)_{k-K+1 \leq i \leq k} )_{k \in \N}$:
\[ 
H_K (\tau^{\star}) := - \sum_{ x \in \{-1,1\}^K } p_x \log_2(p_x) 
\qquad \mbox{where} \quad 
p_x = \P \big( (\eps_i)_{k-K+1 \leq i \leq k} = x\big)
\]
the latter probability being with respect to $k$. Finally, we plot $H_K (\tau^{\star})$ as a function of $K$ and perform a standard robust linear regression in order to estimate its slope: the slope will be eventually the estimate on the entropy of the system.\\
\indent Instead, in order to compute the fractal dimension of the attractors, we start from their three-dimensional visualization over $200000$ years, that is, for $N_r$ for instance: 
\[ \Kk = \Big\{ \big( N_r(t),N_r(t+1), N_r(t+2) \big)  \, , \, 100\,000 \leq t \leq 300\,000 \Big\}  . \]
Then, for various values of $\eps>0$, we compute the number $\widetilde{\Nn}_{\epsilon}(\Kk)$ of cubes $$\mathcal{C}_{i,j,k} = [i \eps; (i+1) \eps] \times [j \eps; (j+1) \eps] \times [k \eps; (k+1) \eps]$$ 
that contain at least one point of $\Kk$. Theoretically, the fractal dimension is the opposite of the slope of this graph at infinity. Here, since $\Kk$ is finite, $\widetilde{\Nn}_{\eps}(\Kk)$ is constant equal to $\mathrm{Card}(\Kk)$ for small $\eps$. So, we estimated the slope of the graph only for a limited set of values of $\eps$.Because of the numerous approximations made during this estimation, the precise value of the fractal dimension should not be taken into account too seriously, but its order of magnitude should be correct.\\
\indent Figure~\cite{fig:entropy} displays the results. The plots of the entropy, as expected, confirm that the randomization of the system brings more chaos and also fuzziness and, therefore, an higher dimension. However, 
the behaviour of the plot for the fractal dimension is not monotone as much as that of the entropy. This can be linked to the lack of precision in the computation of the fractal dimension, or simply to the fact that too much ``fuzziness'' leads to having many isolated points that do not contribute to a substantial increase in the fractal dimension.

\begin{figure}[t]
\begin{subfigure}{0.5\textwidth}
\center{\includegraphics[width=\textwidth]{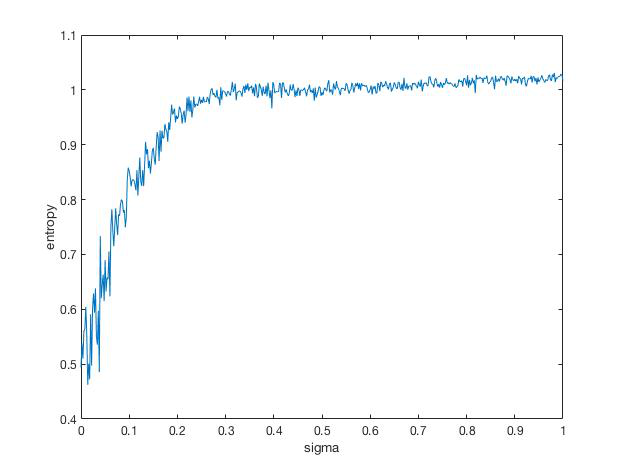}}
\caption{Entropy as function of $\sigma$ for $\Hh_1$}
\end{subfigure}
\begin{subfigure}{0.5\textwidth}
\center{\includegraphics[width=\textwidth]{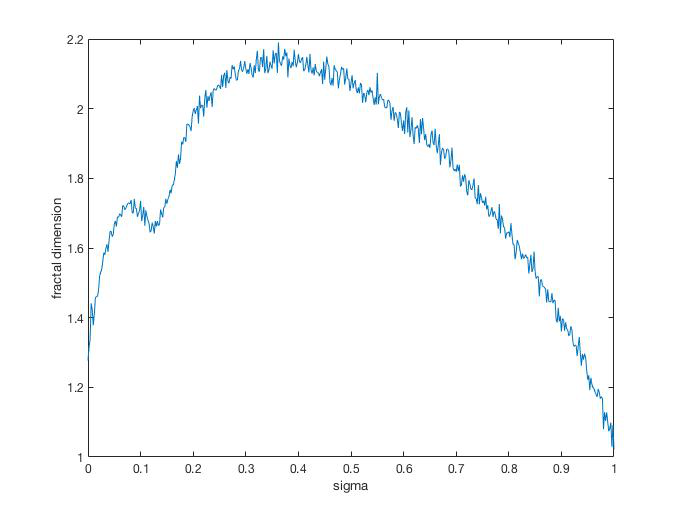}}
\caption{Fractal Dimension in terms of $\sigma$ for $\Hh_1$}
\end{subfigure}
\begin{subfigure}{0.5\textwidth}
\center{\includegraphics[width=\textwidth]{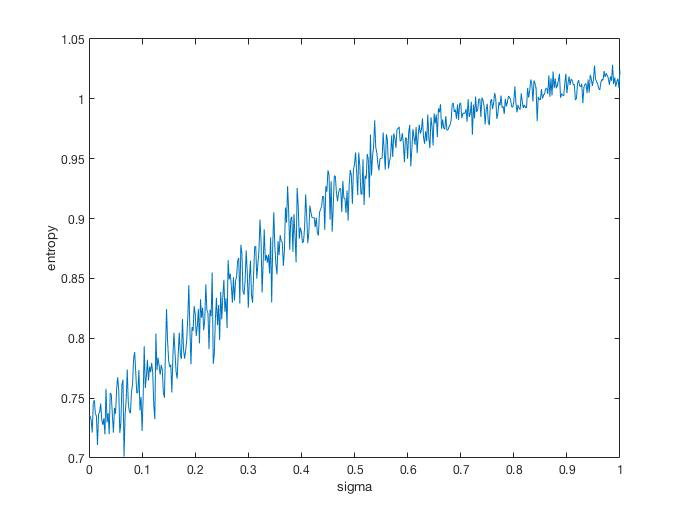}}
\caption{Entropy as function of $\sigma$ for $\Hh_2$}
\end{subfigure}
\begin{subfigure}{0.5\textwidth}
\center{\includegraphics[width=\textwidth]{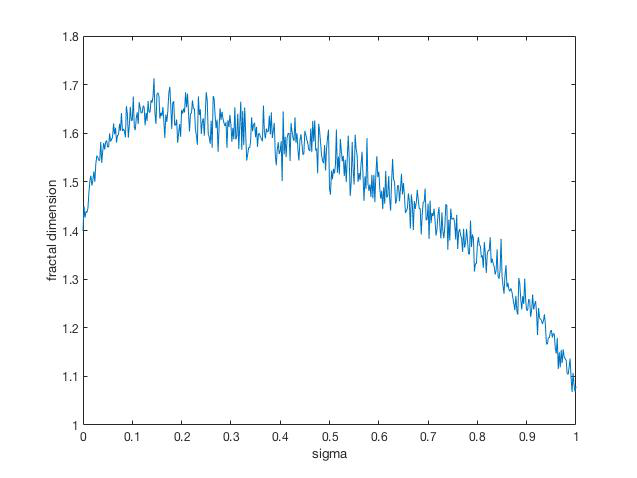}}
\caption{Fractal Dimension in terms of $\sigma$ for $\Hh_2$}
\end{subfigure}
\caption{Estimated fractal dimension and estimated entropy as a function of the parameter $\sigma$ when the parameters of the deterministic dynamical system are fixed as both in $\Hh_1$ (\emph{top panel}) and in $\Hh_2$ (\emph{bottom panel})}
\label{fig:entropy}
\end{figure}

\subsection{Plotting the random attractor}\label{subsec:attractors_random}
In this subsection, we display some plots of the globally attracting set as defined in  Definition~\ref{def:globally_new}. Notice that in Proposition~\ref{prop:existence_globally} we prove the existence of such set, which is \emph{random} and it is constructed by fixing a realization $\omega$ of the Brownian motion and then measuring the value $\phi(t,\om)(N_r^0,P^0)$ for many different initial values $(N_r^0,P^0)$. In practice, we fix a large enough time, say $t=10000$, we generate a huge amount of random initial values, say $20\,000$, and we move the system forward in time by $10000$ years, while using the same 
seed for the generation of the Brownian Motion. Because the attractor is high-dimensional, we display in Figure~\ref{fig:random_attractor} its two-dimensional projection. More precisely, we plot the points $\big(N_r(t),N_r(t+1)\big)$ for $t=10000+i/50$ with $i = 0,1,\dots,5$: we observe actually a gradual evolution of the shape. Similarly, Figure~\ref{fig:random_attractor_1} shows, instead, the same two-dimensional projection for $t=10\,000+i$ with $i=0,1,2,3$, i.e. for integer values of $t$. Now, we make the following remarks.   

\begin{remark}\label{rmk:remark_1}
Because of the definition of random attractor, should both the initial states and the path of the Brownian motion be random, we would have obtained the same plot for all the values of $t$.  
\end{remark}

\begin{remark}\label{rmk:remark_1}
Figures~\ref{fig:random_attractor} and \ref{fig:random_attractor_1} show not only the two-dimensional projection of the attractor as function of the random parameter $\omega$, but also the projection of the random measure, whose existence has been proved in Corollary~\ref{corol:existence_random_measure}. In particular, the areas with higher density are shown in yellow, while the areas of lower density are shown in blue.
\end{remark}

\noindent Finally, we observe that by fixing the seed of the Brownian motion and by letting $t$ vary, we can show every possible instance of the attractors $A(\om)$ thanks to the following lemma.
\begin{lemma}
The dynamical system $(\Om,\Ff,(\theta(t))_{t\in\R},\P)$ is ergodic.
\end{lemma}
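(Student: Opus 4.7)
The plan is to prove ergodicity by establishing the stronger property of \emph{mixing} for the Wiener shift, which immediately implies ergodicity via the standard argument that any $\theta$-invariant set $A$ satisfies $\mathbb{P}(A) = \mathbb{P}(A \cap \theta_t^{-1} A) \to \mathbb{P}(A)^2$, forcing $\mathbb{P}(A) \in \{0,1\}$. A preliminary step is to verify that $\theta_t$ is measure-preserving: by the stationarity and independence of increments of two-sided Brownian motion, the process $s \mapsto \omega(t+s) - \omega(t)$ is itself a two-sided Brownian motion starting at zero, so its law on $\mathcal{C}_0(\mathbb{R};\mathbb{R})$ coincides with $\mathbb{P}$.

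For mixing, I would work first on the generating algebra $\mathcal{A}$ of finite-dimensional cylinder sets of the form $A = \{\omega : (\omega(s_1), \ldots, \omega(s_n)) \in H\}$ with $s_1 < \cdots < s_n$ and $H \subset \mathbb{R}^n$ Borel. Rewriting such a set in terms of the increments $\omega(s_{j+1}) - \omega(s_j)$ and $\omega(s_1)$ places $A$ in the $\sigma$-algebra generated by the Brownian increments over $[s_1, s_n]$ (plus $\omega(s_1)$). For a second cylinder set $B$ supported on times in some finite interval $[s_1', s_n']$, the preimage $\theta_t^{-1} B$ depends on the increments of $\omega$ over the shifted window $[t+s_1', t+s_n']$. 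Choosing $t$ large enough that this window is disjoint from $[s_1, s_n]$, the independence of Brownian increments over disjoint intervals yields $\mathbb{P}(A \cap \theta_t^{-1} B) = \mathbb{P}(A)\,\mathbb{P}(\theta_t^{-1}B) = \mathbb{P}(A)\,\mathbb{P}(B)$ exactly (not just asymptotically). Hence the mixing identity holds on $\mathcal{A}$.

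A standard monotone class / approximation argument then extends the mixing property from the algebra $\mathcal{A}$ to the full Borel $\sigma$-field $\mathcal{F}$: every $A \in \mathcal{F}$ can be approximated in $\mathbb{P}$-measure by cylinder sets, and the shift-invariance of $\mathbb{P}$ controls the error uniformly in $t$. Conclude ergodicity from mixing as indicated above.

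The only delicate point is bookkeeping the two-sided structure, since $\theta_t$ acts via $(\theta_t\omega)(s) = \omega(t+s) - \omega(t)$ and involves both past and future of $\omega$; however, this is handled uniformly once one works with increments rather than point values, and the exact independence for sufficiently large $|t|$ is robust. Thus the argument is essentially a textbook verification, and no substantive obstacle beyond careful handling of cylinder approximations is expected.
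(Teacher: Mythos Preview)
Your argument is correct and yields the stronger conclusion that the Wiener shift is mixing. The paper, by contrast, does not go through mixing: it invokes Kolmogorov's zero--one law directly, arguing that the tail $\sigma$-algebra $\mathcal{T}^\infty=\bigcap_{t\in\R}\mathcal{F}_t^\infty$ is trivial (by independence of increments) and that every $\theta$-invariant set lies in $\mathcal{T}^\infty$. Both routes ultimately rest on the same ingredient---independence of Brownian increments over disjoint time windows---but organize it differently. The paper's approach is shorter: one line of zero--one law plus the observation that invariance forces measurability with respect to the remote future. Your approach costs a cylinder-approximation step but delivers more (mixing), and is arguably more transparent about how the independence is used. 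One small point: in your write-up be careful to track that a cylinder set $A$ depends not only on increments over $[s_1,s_n]$ but also on $\omega(s_1)=\omega(s_1)-\omega(0)$, so the relevant window for $A$ must include the origin; you flag this, and it is indeed harmless once absorbed into the choice of ``$t$ large enough''.
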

\begin{proof}
It is a consequence of Kolmogorov’s zero-one law. Consider the two-parameter filtration $\{\Ff_s^t\}_{s\le t}$ generated by the Brownian motion, and define the $\sigma$-algebra
$$\mathcal T ^\infty = \bigcap_{t \in \R} \Ff^\infty_t.$$ 
The independence of the $\sigma$-algebras $\Ff_s^u$ and $\Ff_t^z$ for all $s < u \le t < z$ allows to apply Kolmogorov’s dichotomy and deduce that the $\sigma$-algebra $\mathcal T^\infty$ is degenerate, i.e. $\P(A) \in \{0, 1\}$ for all $A \in \mathcal T^\infty$. The conclusion follows by observing that every $\theta(t)$-invariant set is contained in $\mathcal T^\infty$.
\end{proof}

\begin{figure}[t]
\begin{subfigure}{0.324\textwidth}
\center{\includegraphics[width=\textwidth]{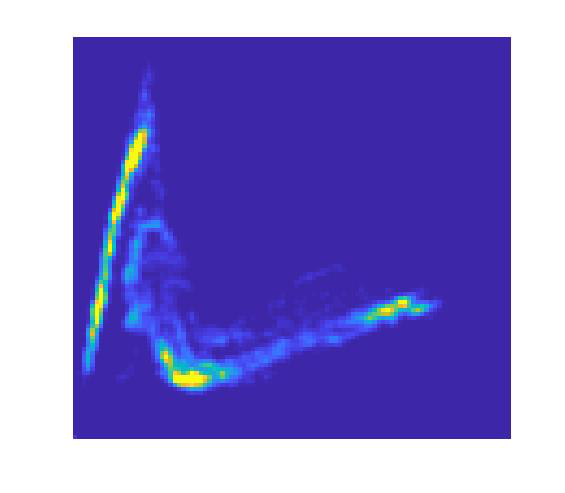}}
\caption{$t=10\,000\quad$}
\end{subfigure}
\begin{subfigure}{0.324\textwidth}
\center{\includegraphics[width=\textwidth]{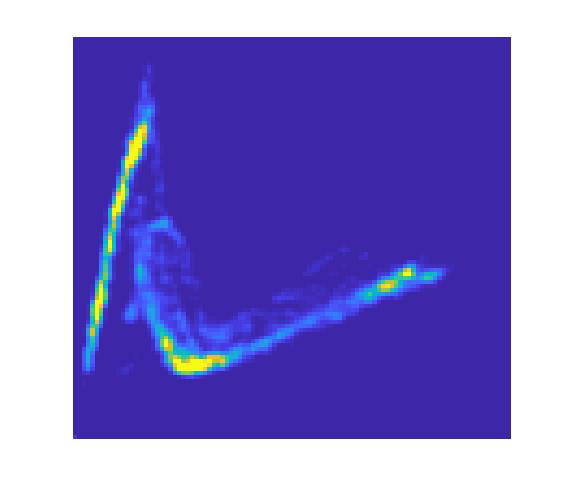}}
\caption{$t=10\,000+\frac1{50}$}
\end{subfigure}
\begin{subfigure}{0.324\textwidth}
\center{\includegraphics[width=\textwidth]{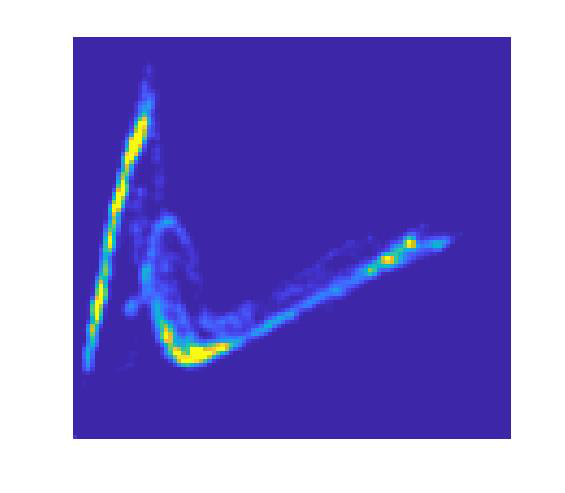}}
\caption{$t=10\,000+\frac2{50}$}
\end{subfigure}
\begin{subfigure}{0.324\textwidth}
\center{\includegraphics[width=\textwidth]{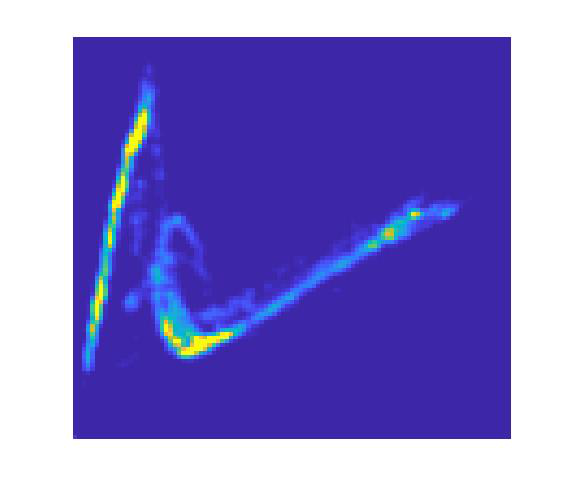}}
\caption{$t=10\,000+\frac3{50}$}
\end{subfigure}
\begin{subfigure}{0.324\textwidth}
\center{\includegraphics[width=\textwidth]{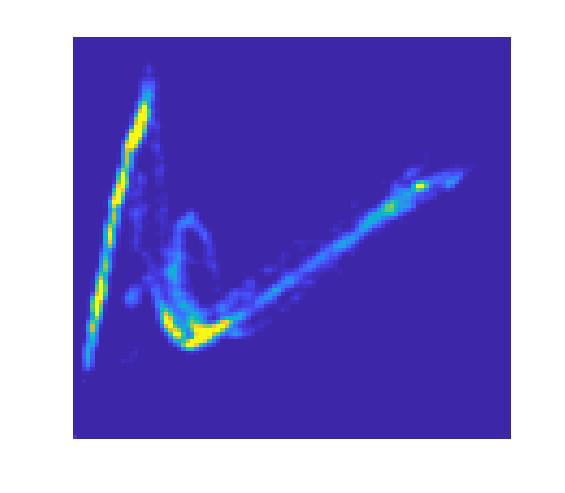}}
\caption{$t=10\,000+\frac4{50}$}
\end{subfigure}
\begin{subfigure}{0.324\textwidth}
\center{\includegraphics[width=\textwidth]{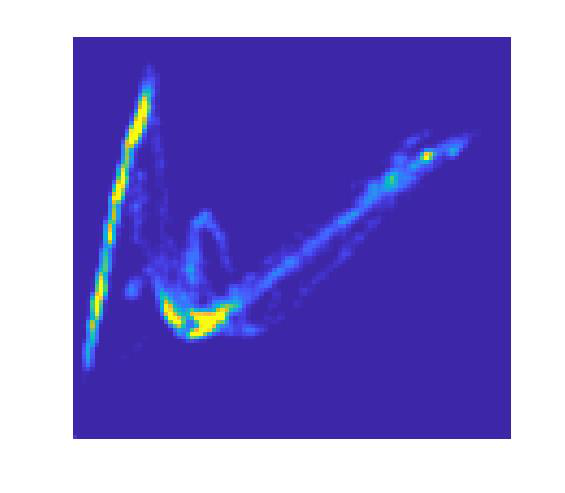}}
\caption{$t=10\,000+\frac5{50}$}
\end{subfigure}
\caption{Two dimensional projection of the random attractor.}
\label{fig:random_attractor}
\end{figure}

\begin{figure}[!ht]
\begin{subfigure}{0.5\textwidth}
\center{\includegraphics[width=\textwidth]{attr0.png}}
\caption{$t=10\,000$}
\end{subfigure}
\begin{subfigure}{0.5\textwidth}
\center{\includegraphics[width=\textwidth]{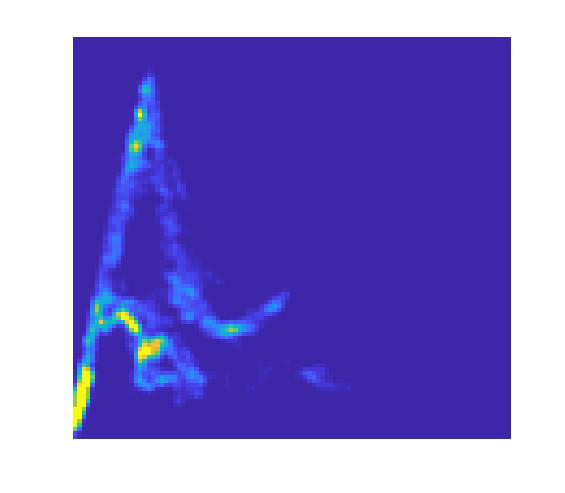}}
\caption{$t=10\,001$}
\end{subfigure}
\begin{subfigure}{0.5\textwidth}
\center{\includegraphics[width=\textwidth]{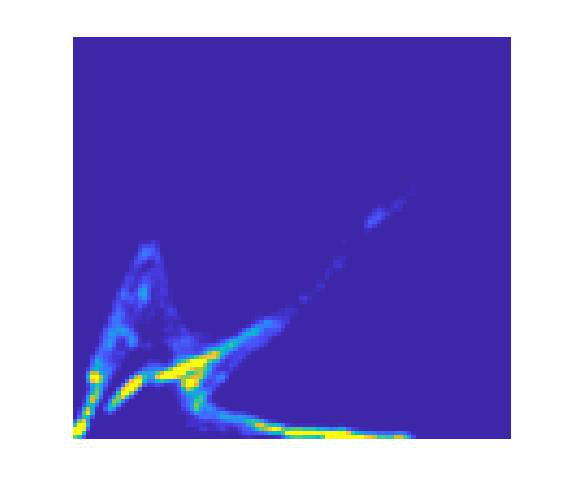}}
\caption{$t=10\,002$}
\end{subfigure}
\begin{subfigure}{0.5\textwidth}
\center{\includegraphics[width=\textwidth]{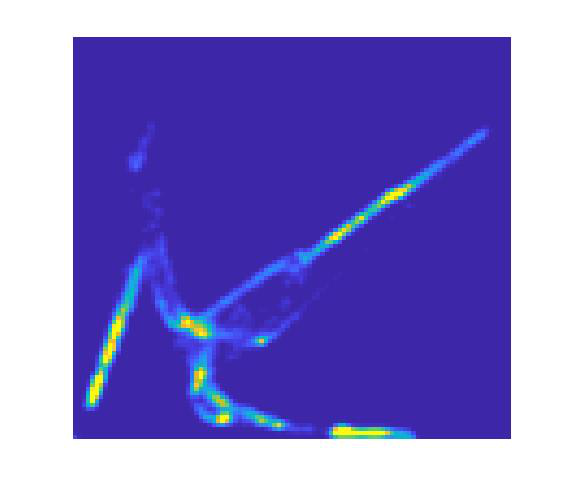}}
\caption{$t=10\,003$}
\end{subfigure}
\caption{Two dimensional projection of the random attractor (in the case of consecutive integer values of $t$).}
\label{fig:random_attractor_1}
\end{figure}

\newpage
{
\bibliographystyle{Chicago}
\bibliography{random_arXiv}
}

\newpage
\appendix
\section{Appendix A: Random Dynamical Systems, Random Attractors and Random Measures}\label{app:RDS_RA_RM}niversal approximation theorem for continuous functions between Euclidean spaces
In this appendix, we remind some fundamental notions and tools 
from the theory of random dynamical systems we have used in our analysis of the model. We refer the reader to \cite{CRFL92, FLTO19} (which we closely follow) and \cite{BH07} for further information and more details.
\noindent
\begin{definition}[Non-Autonomous Dynamical Systems (NADS)] Let $T$ be either the set of real numbers $\mathbb{R}$ or the set of integer number $\mathbb{Z}$. Let $\mathcal{X}$ be defined as in \eqref{eq:defition_of_X}. A NADS with time $T$ is a family of continuous maps $\varphi(s, t):\mathcal{X}\rightarrow\mathcal{X}$ indexed by two times $s \leq t$ with $s, t \in T$ satisfying to the following rules:  
\begin{enumerate}
    \item $\varphi(s, s) = \text{Id}_{\mathcal{X}}\,\forall\,s \in T$.
    \item $\varphi(r, t) \circ \varphi(s, r) = \varphi(s, t)\,\forall\,s \leq r \leq t$.
\end{enumerate}
\end{definition}

\noindent Notice that in the autonomous case a one-parameter family of continuous map would be sufficient to entirely determine the evolution of the system because in this case the evolution is invariant with respect to translation in time, i.e. $\varphi(s,t)x = \varphi(t-s)x$. Instead, in the non-autonomous case the time at which the initial data are prescribed is crucial; as a consequence, it is natural to expect that the ``fixed points'' of the system depend upon the second variable $t$ by letting $s \rightarrow -\infty$. We now give the notion of attractors in the non-autonomous set-up:

\begin{definition}[Pullback attractor]\label{def:pullback}
A family of objects $A(t)$ in a complete metric phase-space $(\mathcal{X},\delta )$ is a pullback attractor for the NADS $\varphi$ if it satisfies the following two conditions:
\begin{enumerate}
    \item For all $t$, $A(t)$ is a compact subset of $\Xx$ and is invariant with respect to the dynamics, namely, $\phi(s, t)A(s) = A(t)\,\forall\,s \le t$.
    \item For all bounded sets $B$, $\forall\,\eps > 0$ there exists $s_0 < 0$ such that $\forall s < s_0$ we have $\phi(s,t)(B) \subset \mathcal U_\eps (A(t))$; $U_\eps (A(t))$ denotes the neighbourhood of radius $\eps$ around the set $A(t)$, namely: 
    \begin{equation}\label{eq:neighbourhood_U_epsilon}
        \mathcal U_\eps(A(t))=\{x\in\Xx\,:\,\inf_{y\in A(t)}\delta (x,y)<\eps\}
    \end{equation}
\end{enumerate}
\end{definition}

\noindent We remark that the previous definition can be given also by means of the non-symmetric Hausdorff-like distance $\delta_h$ between sets. In this case, point \emph{2.} becomes: ``for all bounded sets $B$ it holds that $\lim_{s \rightarrow -\infty} \delta_h(\varphi(s,t)(B), A(t)) = 0$". Because of the fact that NADS and RDS are closely related, the notion of stochastic attractor will be based on the latter definition.

\subsection{Random Dynamical Systems}
\noindent We recall that a measurable dynamical system is a tuple $((\Omega, \mathcal{F}, \mathcal{P}), (\theta_t)_{t \in T})$ where $(\Omega, \mathcal{F}, \mathcal{P})$ is a probability space and $\{\theta_t : \Omega \rightarrow \Omega\}_{t \in T}$ is a family of measure preserving transformations of the probability space $(\Omega, \mathcal{F}, \mathcal{P})$ such that $(t,\omega) \rightarrow \theta_t \omega$ is measurable, $\theta_0 = \text{Id}$ and $\theta_{t+s} = \theta_t \circ \theta_s\,\forall t, s \in T$. 

\begin{definition}[RDS]
\label{def:RDS}
Let $T = \mathbb{R}, \mathbb{R}_{+}, \mathbb{Z}$ or $\mathbb{N}$. A RDS with time $T$ on a metric, complete and separable space $(\mathcal{X}, \delta )$ with Borel $\sigma$-algebra $\mathcal{B}$ over $(\theta_t)_{t \in T}$ on $(\Omega, \mathcal{F}, \mathcal{P})$ is a measurable map
\begin{equation*}
\begin{split}
    \varphi &: T \times \mathcal{X} \times \Omega \rightarrow \mathcal{X}\\
            &\quad\quad\,\,    (t, x, \omega) \rightarrow \varphi(t,\omega)x \\
\end{split}
\end{equation*}
such that $\varphi(0,\omega)=\text{id}_{\mathcal{X}}$ and 
\begin{equation}\label{eq:cocycle}
    \varphi(t+s,\omega) = \varphi(t, \theta_s\omega) \circ \varphi(s,\omega)
\end{equation}
$\forall t, s \in T$ and $\forall \omega \in \Omega$. A family of maps $\varphi(t, \omega)$ satisfying~\eqref{eq:cocycle} is called a cocycle and~\eqref{eq:cocycle} is the cocycle property. 
\end{definition}

\noindent A RDS is said to be \emph{continuous} or \emph{differentiable} if $\varphi(t, \omega):\mathcal{X}\rightarrow\mathcal{X}$ is continuous or differentiable, respectively, $\forall t \in T$ outside a $\mathcal{P}$-nullset. In addition, $\varphi(t, \omega)$ is automatically invertible if $T = \mathbb{R}$ or $\mathbb{Z}$; indeed, in this case, we have $\varphi(t, \omega)^{-1} = \varphi(-t, \theta_t\omega)$ for $t \in T$.\\
\noindent The notion of skew product collects all the $\omega's$ in order to define a (measurable) dynamical system on the product space $(\Omega \times \mathcal{X}, \mathcal{F} \otimes \mathcal{B})$:
\begin{definition}[Skew product]
\label{def:skew_product}
The measurable map
\begin{equation*}
\begin{split}
    \Theta &: T \times \Omega \times \mathcal{X} \rightarrow \Omega \times \mathcal{X}\\
           &\quad\quad\,\,(t, \omega, x) \rightarrow (\theta(t)\omega, \varphi(t, \omega)x),
\end{split}
\end{equation*}
is called the skew product flow of the dynamical system $((\Omega, \mathcal{F}, \mathcal{P}), (\theta_t)_{t \in T})$ and of the co-cycle $\varphi$. 
\end{definition}
\noindent In particular, the family of mapping $\Theta_t = \Theta(t,\,\cdot\,,\,\cdot\,)$ with $t \in T$ is the measurable dynamical system on $(\Omega \times \mathcal{X}, \mathcal{F} \otimes \mathcal{B})$ we were referring to. In addition, it holds that $\Theta_{0} = \text{Id}_{\Omega \times \mathcal{X}}$ and $\Theta_{t+s}(\omega, x) = \Theta_t \circ \Theta_s(\omega, x)\,\forall\,t, s \in T, \omega \in \Omega\,\text{and}\,x\in \mathcal{X}$.

\subsubsection{Attraction and absorption}
We here define the notions of attraction and absorption. 
\begin{definition}
A random set $A$ is said to attract another random set $B$ if $\mathcal{P}$-almost surely
\begin{equation*}
    \lim_{t \rightarrow \infty} \delta_h(\varphi(t, \theta_{-t}\omega)B(\theta_{-t}\omega), A(\omega)) = 0.
\end{equation*}
\end{definition}
\noindent Moreover, we have that a random set $K(\omega)$ is said to be (\emph{strictly}) $\varphi$-forward invariant if 
\begin{equation*}
   \varphi(t, \omega)K(\omega) \subset K(\theta_t\omega)\quad(\varphi(t, \omega)K(\omega) = K(\theta_t\omega))\quad\forall t > 0.
\end{equation*}
In addition, the following definitions hold.
\begin{definition}[Globally attracting set]
\label{def:globally}
Suppose $\varphi$ is a RDS such that there exists a random compact set $A(\omega)$ which satisfies the following conditions:
\begin{enumerate}
    \item $\varphi(t, \omega)A(\omega) = A(\theta_t\omega)\,\forall\,t>0$.
    \item $A$ attracts every bounded deterministic set $B \subset \mathcal{X}$.
\end{enumerate}
Tnen, $A$ is said to be a universally or globally attracting set for $\varphi$.
\end{definition}

\begin{definition}[Absorption time]
If $K$ and $B$ are random sets such that for $\mathcal{P}$-almost all $\omega$ there exists a time $t_B(\omega)$ such that for all $t \geq t_{B}(\omega)$ we have
\begin{equation*}
    \varphi(t, \theta_{-t}\omega)B(\theta_{-t}\omega) \subset K(\omega),
\end{equation*}
then $K$ is said to absorb $B$ and $t_{B}(\omega)$ is called the absorption time.
\end{definition}

\begin{definition}[$\Omega$-limit set]
Given a random set $K$, the set 
\begin{equation*}
    \Omega(K, \omega) = \Omega_{K}(\omega) = \bigcap_{T \geq 0}\overline{\bigcap_{t \geq T}\varphi(t, \theta_{-t}\omega)K(\theta_{-t}\omega)}
\end{equation*}
is said to be the $\Omega$-limit set of $K$. By definition, $\Omega_{K}(\omega)$ is closed.
\end{definition}
In particular, it is possible to identify $\Omega_{K}(\omega)$ with
\begin{equation*}
    \Omega_K(\omega) = \{y \in \mathcal{X}\,:\,\exists\,t_n\rightarrow\infty, x_n \in K(\theta_{-t_n}\omega)\,:\,\varphi(t_n, \theta_{-t_n}\omega)x_n \rightarrow y\}.
\end{equation*}
With this identification, the $\theta$-shift of an $\Omega$-limit set is given by:
\begin{multline*}
\Omega_K(\cdot)\circ\theta_t =\Omega(K,\theta_t\om)= \{y \in \Xx: \exists t_n\to \infty,\\\exists x_n \in K(\theta_{-t_n+t}\om)\ \text{such that} \ \phi(t_n, \theta_{-t_n+t}\om)x_n \to y\}.
\end{multline*}
Also, the following theorem holds:
\begin{atheorem}[cfr.~\cite{CRFL92}, Theorem 3.11]
\label{th:global_attractor}
Suppose $\varphi$ is an RDS on the Polish space $\mathcal{X}$ and suppose that there exists a compact set $K(\omega)$ absorbing every bounded non-random set $B \subset \mathcal{X}$. Then the set
\begin{equation*}
    A(\omega) = \overline{\bigcup_{B \subset \mathcal{X}}\Omega_{B}(\omega)}
\end{equation*}
is a global attractor for $\varphi$.
\end{atheorem}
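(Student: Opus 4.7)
The plan is to verify the three ingredients required by Definition~\ref{def:globally}: (i) $A(\omega)$ is a random compact set, (ii) $\varphi$-invariance, i.e.\ $\varphi(t,\omega)A(\omega)=A(\theta_t\omega)$ for all $t>0$, and (iii) $A$ attracts every bounded deterministic $B\subset\mathcal{X}$. The key mechanism throughout is that the absorbing compact $K(\omega)$ provides a uniform ``container'' into which every trajectory issued from a bounded set eventually falls; combined with continuity of $\varphi(t,\omega)$ and the cocycle identity, this will let us pass to limits along subsequences.

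First I would establish compactness. Fix a bounded deterministic $B\subset\mathcal{X}$. By absorption there exists $t_B(\omega)$ such that $\varphi(t,\theta_{-t}\omega)B\subset K(\omega)$ for all $t\ge t_B(\omega)$. Hence, restricting the intersection in the definition of the $\Omega$-limit to $T\ge t_B(\omega)$ and using that $K(\omega)$ is closed, one gets $\Omega_B(\omega)\subset K(\omega)$. Taking the union over bounded $B$ and then the closure yields $A(\omega)\subset K(\omega)$, hence $A(\omega)$ is compact; measurability of $\omega\mapsto A(\omega)$ follows from that of each $\Omega_B(\omega)$.

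The core of the argument is invariance. For the inclusion $\varphi(t,\omega)A(\omega)\subset A(\theta_t\omega)$, take $y\in\Omega_B(\omega)$ with $y=\lim_n \varphi(s_n,\theta_{-s_n}\omega)x_n$, $s_n\to\infty$, $x_n\in B$. Continuity of $\varphi(t,\omega)$ together with the cocycle identity
\[
\varphi(t,\omega)\,\varphi(s_n,\theta_{-s_n}\omega)=\varphi(t+s_n,\theta_{-s_n}\omega)=\varphi\bigl(t+s_n,\theta_{-(t+s_n)}\theta_t\omega\bigr)
\]
gives $\varphi(t,\omega)y\in\Omega_B(\theta_t\omega)$; since $\varphi(t,\omega)$ is continuous and $A(\omega)$ is compact, the forward image commutes with the closure and we conclude $\varphi(t,\omega)A(\omega)\subset A(\theta_t\omega)$. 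The reverse inclusion is the subtler one: given $y\in\Omega_B(\theta_t\omega)$, write $y=\lim_n \varphi(s_n,\theta_{-(s_n-t)}\omega)x_n$, and for $s_n>t$ factor via cocycle as $\varphi(t,\omega)\circ\varphi(s_n-t,\theta_{-(s_n-t)}\omega)$. Setting $z_n:=\varphi(s_n-t,\theta_{-(s_n-t)}\omega)x_n$, absorption forces $z_n\in K(\omega)$ for large $n$, so by compactness of $K(\omega)$ a subsequence converges to some $z\in\Omega_B(\omega)\subset A(\omega)$; continuity of $\varphi(t,\omega)$ then gives $y=\varphi(t,\omega)z\in\varphi(t,\omega)A(\omega)$.

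Finally, attraction follows by a contradiction argument of the same flavor. If $\delta_h(\varphi(t,\theta_{-t}\omega)B,A(\omega))\not\to 0$, choose $\varepsilon>0$, $t_n\to\infty$ and $x_n\in B$ with $d\bigl(\varphi(t_n,\theta_{-t_n}\omega)x_n,A(\omega)\bigr)\ge\varepsilon$. For $n$ large, $\varphi(t_n,\theta_{-t_n}\omega)x_n\in K(\omega)$ by absorption, so a subsequence converges to a point $y$; by construction $y\in\Omega_B(\omega)\subset A(\omega)$, contradicting the uniform lower bound. The hardest step is the backward inclusion in invariance: it is where one genuinely needs compactness of $K(\omega)$ to produce a converging pre-image, and where both the cocycle identity and continuity of $\varphi(t,\omega)$ must be used simultaneously; once this is in place, compactness and attraction are essentially formal consequences of absorption plus continuity.
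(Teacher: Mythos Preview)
Your argument is correct and follows the standard route: absorption into $K(\omega)$ gives compactness of each $\Omega_B(\omega)$ and hence of $A(\omega)$; the cocycle identity plus continuity of $\varphi(t,\omega)$ yields the forward inclusion, while compactness of $K(\omega)$ supplies the convergent pre-images needed for the backward inclusion; and attraction is the usual contradiction via a subsequence trapped in $K(\omega)$.

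Note, however, that the paper does \emph{not} supply its own proof of this statement: Theorem~\ref{th:global_attractor} is simply quoted from \cite{CRFL92} (Theorem~3.11) as a background tool in the appendix, and the paper's contribution lies in applying (a variant of) it, namely Proposition~\ref{prop:global_new}, to the specific RDS under study. Your proof is essentially the argument one finds in \cite{CRFL92}, so there is nothing to compare against here beyond observing that you have reconstructed the cited result faithfully.
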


\subsubsection{Invariant measures on random sets}
\label{subsec:invariant}
First, we introduce the concept of random (probability) measure on $\mathcal{X}$. 
\begin{definition}[Random (probability) measure]
A map $\mu: \mathcal{B} \times \Omega \rightarrow [0,1]$, $(B, \omega) \rightarrow \mu_{\omega}(B)$ satisfying the following two conditions
\begin{enumerate}
    \item for every $B \in \mathcal{B}$, $\omega \rightarrow \mu_{\omega}(B)$ is measurable, 
    \item for $\mathcal{P}$-almost every $\omega \in \Omega$, $B \rightarrow \mu_{\omega}(B)$ is a Borel probability measure,
\end{enumerate}
is said to be a random (probability) measure on $\mathcal{X}$. 
\end{definition}
\noindent Hereafter, we will denote by $\mathcal{P}_{\Omega}(\mathcal{X})$ the set of random measures on $\mathcal{X}$ and by $\mathcal{P}(\mathcal{X})$ the set of canonical probability measures. 

\begin{definition}[Invariant measure]
A random (probability) measure $\mu = (\mu_{\omega})_{\omega \in \Omega}$ is said to be \emph{invariant} for the RDS $\varphi$ if $\forall t \in T$ and for $\mathcal{P}$-almost every $\omega \in \Omega$ we have
\begin{equation*}
    \varphi(t,\omega)\mu_{\omega} = \mu_{\theta(t)\omega}.
\end{equation*}
\end{definition}
In this work, we will aim at finding an invariant measure via an averaging procedure applied to the initial distribution of the data. To this end, let $\lambda_{\mu}$ be the measure of which $\mu$ is the \emph{factorization}, i.e., the measure defined $\forall A \in \mathcal{F} \otimes \mathcal{B}$ as
\begin{equation}\label{eq:factorization_mu}
    \lambda_{\mu}(A) = \int_{\Omega}\left(\int_{\mathcal{X}} I_{A}(\omega, x)\mu_{\omega}(\,dx)\right)\mathcal{P}(\,dx).
\end{equation}
The concept of invariance for a random (probability) measure $\mu$ is related to the invariance of the measure $\lambda_{\mu}$ with respect to the skew product; the following proposition holds:
\begin{proposition}[cfr.~\cite{FLTO19}, Proposition 72]
The random (probability) measure $\mu = (\mu_{\omega})_{\omega \in \Omega}$ is invariant for the RDS $\varphi$ if and only if the measure $\lambda_{\mu}$ on $(\Omega \times \mathcal{X}, \mathcal{F} \otimes \mathcal{B})$ is invariant for the skew product associated to $\varphi$.
\end{proposition}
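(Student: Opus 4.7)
The plan is to rewrite both conditions in terms of integrals against indicator functions on $\Omega\times\mathcal{X}$ and match them. The forward implication is a direct Fubini calculation combined with $\theta_t$-invariance of $\mathcal{P}$; the reverse implication requires extracting a pointwise equality of fibrewise measures from an integral identity, and this is where the only real technical step lies.

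\textbf{Forward direction ($\mu$ invariant $\Rightarrow$ $\lambda_\mu$ invariant).} Given $A\in\mathcal{F}\otimes\mathcal{B}$, unfold $\Theta_t^{-1}(A)=\{(\omega,x):(\theta_t\omega,\varphi(t,\omega)x)\in A\}$ and compute, by Fubini,
\begin{equation*}
\lambda_\mu(\Theta_t^{-1}(A))=\int_{\Omega}\int_{\mathcal{X}}\mathbbm{1}_A\bigl(\theta_t\omega,\varphi(t,\omega)x\bigr)\,\mu_\omega(dx)\,\mathcal{P}(d\omega).
\end{equation*}
For each $\omega$, the inner integral equals $\bigl((\varphi(t,\omega))_*\mu_\omega\bigr)(A_{\theta_t\omega})$, where $A_{\omega'}:=\{x:(\omega',x)\in A\}$ is the $\omega'$-section; by the RDS invariance this is $\mu_{\theta_t\omega}(A_{\theta_t\omega})$. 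Applying the change of variable $\omega\mapsto\theta_t\omega$ and $(\theta_t)_*\mathcal{P}=\mathcal{P}$ rewrites the outer integral as $\int_\Omega \mu_{\omega'}(A_{\omega'})\,\mathcal{P}(d\omega')=\lambda_\mu(A)$.

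\textbf{Backward direction ($\lambda_\mu$ invariant $\Rightarrow$ $\mu$ invariant).} I would test invariance on rectangles $C\times B$ with $C\in\mathcal{F}$, $B\in\mathcal{B}$. Since $\Theta_t^{-1}(C\times B)=\theta_t^{-1}C\cap\{(\omega,x):x\in\varphi(t,\omega)^{-1}B\}$ and $(\theta_t)_*\mathcal{P}=\mathcal{P}$, the identity $\lambda_\mu(\Theta_t^{-1}(C\times B))=\lambda_\mu(C\times B)$ rewrites as
\begin{equation*}
\int_{\theta_t^{-1}C}\mu_\omega\bigl(\varphi(t,\omega)^{-1}B\bigr)\,\mathcal{P}(d\omega)=\int_{\theta_t^{-1}C}\mu_{\theta_t\omega}(B)\,\mathcal{P}(d\omega).
\end{equation*}
Letting $C$ range over $\mathcal{F}$ (so $\theta_t^{-1}C$ also ranges over $\mathcal{F}$, since $\theta_t$ is measurable and measure-preserving) forces the two integrands to be $\mathcal{P}$-a.s.\ equal, giving $\mu_\omega(\varphi(t,\omega)^{-1}B)=\mu_{\theta_t\omega}(B)$ for $\mathcal{P}$-a.e.\ $\omega$, possibly outside a $B$-dependent null set.

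\textbf{Main obstacle.} The genuine work is to promote this a.s.\ equality from each fixed $B$ to all Borel $B$ simultaneously outside a single $\mathcal{P}$-null set, since RDS invariance demands equality of $(\varphi(t,\omega))_*\mu_\omega$ and $\mu_{\theta_t\omega}$ as \emph{measures} for almost every $\omega$. I would exploit the fact that $\mathcal{X}$ is Polish to pick a countable algebra $\mathcal{A}_0\subset\mathcal{B}$ generating $\mathcal{B}$, apply the identity to each $B\in\mathcal{A}_0$, and take the countable union of the exceptional null sets to obtain a single $\mathcal{P}$-null set $N$ such that for every $\omega\notin N$ the two Borel probability measures $(\varphi(t,\omega))_*\mu_\omega$ and $\mu_{\theta_t\omega}$ agree on $\mathcal{A}_0$. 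A standard $\pi$-$\lambda$ argument then extends the equality to all of $\mathcal{B}$ for such $\omega$, which is exactly the RDS-invariance condition $(\varphi(t,\omega))_*\mu_\omega=\mu_{\theta_t\omega}$.
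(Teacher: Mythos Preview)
The paper does not actually prove this proposition: it is stated in the appendix with the citation ``cfr.~\cite{FLTO19}, Proposition~72'' and no argument is given. So there is nothing to compare your approach against from the paper itself.

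Your argument is correct and is the standard one. One small point of imprecision: in the backward direction you write that ``$\theta_t^{-1}C$ also ranges over $\mathcal{F}$, since $\theta_t$ is measurable and measure-preserving''. Measurability and measure-preservation alone do not give $\theta_t^{-1}\mathcal{F}=\mathcal{F}$; what you actually need is that $\theta_t$ is bijective with measurable inverse. In the setting of the paper this holds, because $(\theta_t)_{t\in\mathbb{R}}$ is the two-sided Wiener shift and in particular a group, so $\theta_t^{-1}=\theta_{-t}$ is itself measurable and $\theta_t^{-1}\mathcal{F}=\mathcal{F}$. With that justification in place, your conclusion that the two integrands agree $\mathcal{P}$-a.s.\ is valid, and the subsequent passage from a fixed $B$ to all Borel $B$ via a countable generating $\pi$-system on the Polish space $\mathcal{X}$ is exactly the right way to upgrade to an equality of measures for $\mathcal{P}$-a.e.\ $\omega$.
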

 
\noindent Hereafter, we will denote by $\mathcal{P}_{\mathcal{P}}(\Omega \times \mathcal{X})$ the set of probability measures on $(\Omega \times \mathcal{X}, \mathcal{F} \otimes \mathcal{B})$ of the form \eqref{eq:factorization_mu} for some random (probability) measure $\mu$. In particular, $\Theta_t$ maps $\mathcal{P}_{\mathcal{P}}(\Omega \times \mathcal{X})$ into itself. Also, it can be shown that all the probability measures on $(\Omega \times \mathcal{X})$ with marginal $\mathcal{P}$ on $\Omega$ have a unique random (probability) measure satisfying Equation~\eqref{eq:factorization_mu}. This leads to a one-to-one correspondence between $\mathcal{P}_{\mathcal{P}}(\Omega \times \mathcal{X})$ and $\mathcal{P}_{\Omega}(\mathcal{X})$. In addition, we denote by $\mathcal{I}_{\mathcal{P}}(\varphi) \subset \mathcal{P}_{\mathcal{P}}(\Omega \times \mathcal{X})$ the set of measures $\lambda_{\mu}$ for which the associated random measure $\mu$ is invariant. We give now the following definition.

\begin{definition}[cfr.~\cite{FLTO19}, Page 51]
We define $L_{\mathcal{P}}^{1}(\omega, \mathcal{C}_{b}(\mathcal{X}))$ as the space of those functions $f:\Omega\rightarrow\mathcal{C}_b(\mathcal{X})$ such that the map $(\omega, x) \rightarrow f(\omega)(x) = f(\omega, x)$ is measurable and the integral
\begin{equation*}
    \|f\|_{1,\infty}:=\int_{\Omega} \sup_{x \in \mathcal{X}} |f(\omega, x)| d\mathcal{P}(\omega)
\end{equation*}
is finite.  
\end{definition}
\noindent In particular, we identify two functions $f$ and $g$ if $\mathcal{P}(f(\,\cdot\,,\omega) \neq g(\,\cdot\,,\omega))=0$; the equivalence class of $f$ will be identified with $f$.\\
\noindent The space $\mathcal{P}_{\mathcal{P}}(\Omega \times \mathcal{X})$ is endowed with the topology of the weak convergence, which is the smallest topology on $\mathcal{P}_{\Omega}(\mathcal{X})$ such that the maps
\begin{equation*}
    \mu \rightarrow \mu(f) = \int_{\Omega}\int_{\mathcal{X}} f(\omega, x)\,d\mu_{\omega}(x)\,d\mathcal{P}(\omega) = \int_{\Omega \times \mathcal{X}} f(\omega, x)\,d\lambda_{\mu}(\omega, c)
\end{equation*}
are continuous for each $f \in L_{\mathcal{P}}^{1}(\omega, \mathcal{C}_{b}(\mathcal{X}))$. At this point, we consider the action of the skew product (see Definition~\ref{def:skew_product}) on functions $f \in L_{\mathcal{P}}^{1}(\omega, \mathcal{C}_{b}(\mathcal{X}))$ given by $\Theta_t f = f \circ \Theta_t$; in particular, such a product belongs to $L_{\mathcal{P}}^{1}(\omega, \mathcal{C}_{b}(\mathcal{X}))$ too.

\begin{proposition}[cfr.~\cite{FLTO19}, Proposition 73]
If $\varphi$ is a continuous RDS on a Polish space $\mathcal{X}$, the map $\mu \rightarrow \Theta_t \mu$ on $\mathcal{P}_{\mathcal{P}}(\Omega \times \mathcal{X})$ is affine and continuous. Moreover, the set $\mathcal{I}_{\mathcal{P}}(\varphi)$ is convex and closed.
\end{proposition}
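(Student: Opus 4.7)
The strategy is to handle affineness and continuity of $\mu\mapsto\Theta_t\mu$ first, since convexity and closedness of $\mathcal{I}_{\mathcal{P}}(\varphi)$ will follow as easy corollaries. Throughout, I use the duality $\Theta_t\mu(f)=\mu(f\circ\Theta_t)$ valid for every $f\in L^1_{\mathcal{P}}(\Omega,\mathcal{C}_b(\mathcal{X}))$, and the fact that $\theta_t$ is $\mathcal{P}$-measure preserving so $\Theta_t\mu\in\mathcal{P}_{\mathcal{P}}(\Omega\times\mathcal{X})$ whenever $\mu\in\mathcal{P}_{\mathcal{P}}(\Omega\times\mathcal{X})$ (the marginal on $\Omega$ is preserved).

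Affineness is immediate: push-forward by $\Theta_t$ is a linear operation on signed measures, so for $\mu_1,\mu_2\in\mathcal{P}_{\mathcal{P}}(\Omega\times\mathcal{X})$ and $\alpha\in[0,1]$ one has
\[
\Theta_t\bigl(\alpha\mu_1+(1-\alpha)\mu_2\bigr)(A)=\alpha\,\Theta_t\mu_1(A)+(1-\alpha)\,\Theta_t\mu_2(A)
\]
for every $A\in\mathcal{F}\otimes\mathcal{B}$. For continuity, the plan is to show that the pullback $f\mapsto f\circ\Theta_t$ is a well-defined map from $L^1_{\mathcal{P}}(\Omega,\mathcal{C}_b(\mathcal{X}))$ into itself. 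First, if $f\in L^1_{\mathcal{P}}(\Omega,\mathcal{C}_b(\mathcal{X}))$, then $(\omega,x)\mapsto f(\theta_t\omega,\varphi(t,\omega)x)$ is measurable because $\Theta_t$ is measurable, and for $\mathcal{P}$-a.e.\ $\omega$ the section $x\mapsto f(\theta_t\omega,\varphi(t,\omega)x)$ is continuous and bounded since $f(\theta_t\omega,\cdot)\in\mathcal{C}_b(\mathcal{X})$ and $\varphi(t,\omega)$ is continuous (this is where the continuity hypothesis on the RDS enters). Second, $\theta_t$-invariance of $\mathcal{P}$ yields
\[
\|f\circ\Theta_t\|_{1,\infty}=\int_\Omega\sup_{x\in\mathcal{X}}|f(\theta_t\omega,\varphi(t,\omega)x)|\,d\mathcal{P}(\omega)\le\int_\Omega\sup_{y\in\mathcal{X}}|f(\theta_t\omega,y)|\,d\mathcal{P}(\omega)=\|f\|_{1,\infty}.
\]
So $f\circ\Theta_t\in L^1_{\mathcal{P}}(\Omega,\mathcal{C}_b(\mathcal{X}))$. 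Continuity of $\mu\mapsto\Theta_t\mu$ then follows directly from the very definition of the weak topology: if $\mu_n\to\mu$, then for every $f\in L^1_{\mathcal{P}}(\Omega,\mathcal{C}_b(\mathcal{X}))$ we have $\Theta_t\mu_n(f)=\mu_n(f\circ\Theta_t)\to\mu(f\circ\Theta_t)=\Theta_t\mu(f)$.

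From this, both claims on $\mathcal{I}_{\mathcal{P}}(\varphi)$ drop out. For convexity, if $\mu_1,\mu_2\in\mathcal{I}_{\mathcal{P}}(\varphi)$ and $\alpha\in[0,1]$, affineness gives
\[
\Theta_t\bigl(\alpha\mu_1+(1-\alpha)\mu_2\bigr)=\alpha\,\Theta_t\mu_1+(1-\alpha)\,\Theta_t\mu_2=\alpha\mu_1+(1-\alpha)\mu_2\qquad\forall t\in T,
\]
so the convex combination is still invariant. For closedness, suppose $(\mu_n)\subset\mathcal{I}_{\mathcal{P}}(\varphi)$ converges weakly to $\mu$; by continuity of $\Theta_t$ we have $\Theta_t\mu_n\to\Theta_t\mu$, while invariance gives $\Theta_t\mu_n=\mu_n\to\mu$. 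Uniqueness of weak limits in $\mathcal{P}_{\mathcal{P}}(\Omega\times\mathcal{X})$ yields $\Theta_t\mu=\mu$ for each $t$, so $\mu\in\mathcal{I}_{\mathcal{P}}(\varphi)$.

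The main technical point is the verification that $f\circ\Theta_t\in L^1_{\mathcal{P}}(\Omega,\mathcal{C}_b(\mathcal{X}))$: continuity of the sections is the place where the continuity hypothesis on the RDS is essential, and the $\|\cdot\|_{1,\infty}$-bound uses that $\theta_t$ preserves $\mathcal{P}$. Both facts are standard but must be stated explicitly; the rest is bookkeeping with the definition of the weak topology.
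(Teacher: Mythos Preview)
The paper does not prove this proposition itself; it is quoted from \cite{FLTO19} (Proposition~73 there) and stated without proof in the appendix. Your argument is the standard one and is essentially correct: affineness of the push-forward is immediate, continuity follows from the key observation that $f\circ\Theta_t\in L^1_{\mathcal{P}}(\Omega,\mathcal{C}_b(\mathcal{X}))$ whenever $f$ is (using continuity of $\varphi(t,\omega)$ for the $\mathcal{C}_b$-part and $\theta_t$-invariance of $\mathcal{P}$ for the $\|\cdot\|_{1,\infty}$-bound), and convexity and closedness of $\mathcal{I}_{\mathcal{P}}(\varphi)$ are then formal consequences.

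One minor technical point: for closedness you argue with sequences, but the weak topology induced by $L^1_{\mathcal{P}}(\Omega,\mathcal{C}_b(\mathcal{X}))$ is not a priori first-countable, so sequential closedness need not imply closedness. The fix is cosmetic: either run the same argument with nets, or observe directly that
\[
\mathcal{I}_{\mathcal{P}}(\varphi)=\bigcap_{t\in T}\ \bigcap_{f\in L^1_{\mathcal{P}}(\Omega,\mathcal{C}_b(\mathcal{X}))}\bigl\{\mu:\mu(f\circ\Theta_t)=\mu(f)\bigr\},
\]
an intersection of preimages of singletons under continuous maps $\mu\mapsto\mu(f\circ\Theta_t)-\mu(f)$, hence closed.
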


\noindent We have now all the theoretical instruments to introduce the averaging process mentioned before, which enables us to state the existence of measures in $\mathcal{I}_{\mathcal{P}}(\varphi)$.    

\begin{proposition}[cfr.~\cite{FLTO19}, Proposition 74]
\label{prop:averaging}
Let $\varphi$ be a continuous RDS on a Polish space $\mathcal{X}$ with continuous time $T$. Let $\nu$ be in $\mathcal{P}_{\mathcal{P}}(\Omega \times \mathcal{X})$. For each $t \in T$, $t > 0$, let $\mu_t$ be the measure defined as 
\begin{equation}\label{eq:averaging_measure}
    \mu_t(A) := \frac{1}{t} \int_{0}^{t} (\Theta_s \nu)(A)\,ds,\quad \forall A \in \mathcal{F} \otimes \mathcal{B}.
\end{equation}
Then, every limit point of $(\mu_t)_{t}$ for $t\rightarrow\infty$, in the topology of the weak convergence, is in $\mathcal{I}_{\mathcal{P}}(\varphi)$. 
\end{proposition}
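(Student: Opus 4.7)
The plan is to carry out a Krylov--Bogolyubov style argument adapted to the skew product $\Theta$. Let $\mu^*$ be a limit point of $(\mu_t)_{t>0}$ in the weak topology on $\mathcal{P}_{\mathcal{P}}(\Omega\times\mathcal{X})$, i.e.\ there exists $t_n\to+\infty$ such that $\mu_{t_n}\to\mu^*$ weakly. The goal is to show $\Theta_r\mu^*=\mu^*$ for every $r\in T$, which by the correspondence between $\mathcal{P}_{\mathcal{P}}(\Omega\times\mathcal{X})$ and invariant random measures places $\mu^*$ in $\mathcal{I}_{\mathcal{P}}(\varphi)$.

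First I would check that each $\mu_t$ is genuinely an element of $\mathcal{P}_{\mathcal{P}}(\Omega\times\mathcal{X})$, since this is not automatic from the definition of the integral in Equation~\eqref{eq:averaging_measure}. One defines $\mu_t$ weakly, setting $\mu_t(f):=\frac{1}{t}\int_0^t (\Theta_s\nu)(f)\,ds$ for every $f\in L^1_{\mathcal{P}}(\Omega,\mathcal{C}_b(\mathcal{X}))$; this defines a bounded linear functional because $|(\Theta_s\nu)(f)|\le \|f\|_{1,\infty}$. The marginal of $\mu_t$ on $\Omega$ equals $\mathcal{P}$ because each $\Theta_s\nu$ has marginal $\mathcal{P}$ (since $\theta_s$ preserves $\mathcal{P}$ and $\nu$ has marginal $\mathcal{P}$). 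This is enough to guarantee, via the one-to-one correspondence recalled in the text, that $\mu_t$ factorizes as $\lambda_{\boldsymbol{\mu}_t}$ for some random measure $\boldsymbol{\mu}_t$.

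The core step is the following averaging identity. For $r\in T$ and $f\in L^1_{\mathcal{P}}(\Omega,\mathcal{C}_b(\mathcal{X}))$, using the continuity and cocycle structure of $\Theta$,
\begin{align*}
(\Theta_r\mu_t)(f)
&=\mu_t(\Theta_r f)
=\frac{1}{t}\int_0^t(\Theta_s\nu)(\Theta_r f)\,ds
=\frac{1}{t}\int_0^t(\Theta_{s+r}\nu)(f)\,ds \\
&=\frac{1}{t}\int_r^{t+r}(\Theta_u\nu)(f)\,du
=\mu_t(f)+\frac{1}{t}\left[\int_t^{t+r}(\Theta_u\nu)(f)\,du-\int_0^r(\Theta_s\nu)(f)\,ds\right].
\end{align*}
Since $|(\Theta_u\nu)(f)|\le\|f\|_{1,\infty}$, both leftover integrals are bounded in absolute value by $|r|\,\|f\|_{1,\infty}$, so the bracket tends to $0$ as $t\to\infty$. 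Therefore $(\Theta_r\mu_{t_n})(f)-\mu_{t_n}(f)\to 0$ along the chosen subsequence.

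To conclude, I would pass to the limit along $t_n\to\infty$. On the one hand, by the assumption $\mu_{t_n}\to\mu^*$ weakly, so $\mu_{t_n}(f)\to\mu^*(f)$. On the other hand, by the previous proposition (continuity of $\mu\mapsto\Theta_r\mu$ on $\mathcal{P}_{\mathcal{P}}(\Omega\times\mathcal{X})$), also $(\Theta_r\mu_{t_n})(f)\to(\Theta_r\mu^*)(f)$. Combining both gives $(\Theta_r\mu^*)(f)=\mu^*(f)$ for every $f\in L^1_{\mathcal{P}}(\Omega,\mathcal{C}_b(\mathcal{X}))$, hence $\Theta_r\mu^*=\mu^*$, and consequently $\mu^*\in\mathcal{I}_{\mathcal{P}}(\varphi)$. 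The main subtlety I expect to encounter is the rigorous definition of $\mu_t$ as a measure-valued Bochner-type integral and the verification that its $\Omega$-marginal remains equal to $\mathcal{P}$, so that limit points stay inside $\mathcal{P}_{\mathcal{P}}(\Omega\times\mathcal{X})$; everything else is a clean Cesàro-type cancellation.
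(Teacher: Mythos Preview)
Your argument is correct and is exactly the standard Krylov--Bogolyubov scheme one expects here: use the semigroup property $\Theta_{s+r}=\Theta_r\circ\Theta_s$ to rewrite $(\Theta_r\mu_t)(f)$ as a shifted time-average of $(\Theta_u\nu)(f)$, observe that the boundary terms are $O(|r|\|f\|_{1,\infty}/t)$, and pass to the limit using the continuity of $\mu\mapsto\Theta_r\mu$ stated in the preceding proposition. Note, however, that the paper does not give its own proof of this statement: it is quoted from \cite{FLTO19}, Proposition~74, and only invoked as a tool. So there is no ``paper's proof'' to compare against beyond the citation; your write-up is precisely the argument that reference carries out, and the only point worth tightening is the one you already flag---making sure the integral $\frac{1}{t}\int_0^t\Theta_s\nu\,ds$ is well defined as an element of $\mathcal{P}_{\mathcal{P}}(\Omega\times\mathcal{X})$ (weak definition via test functions in $L^1_{\mathcal{P}}(\Omega,\mathcal{C}_b(\mathcal{X}))$ and preservation of the $\Omega$-marginal), so that both $\mu_t$ and any weak limit $\mu^*$ remain in the right space.
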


\noindent Finally, the existence of limit points for the sequence \eqref{eq:averaging_measure}, and then of random invariant measures for $\varphi$, can be established through an analogous of Prohorov theorem for random measures.
\begin{definition}
A set of measures $\Gamma\subset \Pp_\P (\Om \times \Xx)$ is said to be tight if for every $\eps > 0$ there exists a compact set $C_\eps \subset \Xx$ such that, for each $\lam \in \Gamma$ it holds that $\lam(\Om\times C_\eps) \ge 1-\eps$. In other words, we must have $\int_\Omega \mu_\om(C_\eps) \P(\diff \om)\ge1-\eps$, where $\mu_\om$ is the factorization of $\lambda_\mu.$
\end{definition}

\begin{theorem}[Prohorov theorem for Random Measures, cfr.~\cite{CR02}, Theorem 4.4]\label{th:prohorov_random}
If $\Gamma\subset \Pp_\P (\Om \times \Xx)$ is tight, then every sequence $(\mu_n)_{n\in\N} \subset \Gamma$ admits a convergent sub-sequence.
\end{theorem}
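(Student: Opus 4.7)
The plan is to reduce to the classical Prohorov theorem on the Polish product space $\Om\times\Xx$, extract a subsequential weak limit there, identify it as the factorisation of a random measure, and then upgrade ordinary weak convergence to convergence against the larger class of test functions $L^1_\P(\om,\Cc_b(\Xx))$ that defines the topology on $\Pp_\P(\Om\times\Xx)$.

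First, I would convert tightness of $\Gamma$ into classical tightness on $\Om\times\Xx$. By hypothesis, for each $\eps>0$ there is a compact $C_\eps\subset\Xx$ with $\lam(\Om\times C_\eps)\ge 1-\eps$ for every $\lam\in\Gamma$. Since $\Om$ is Polish, Ulam's theorem provides a compact $K_\eps\subset\Om$ with $\P(K_\eps)\ge 1-\eps$; as every $\lam\in\Gamma$ has first marginal $\P$, we obtain $\lam(K_\eps\times C_\eps)\ge 1-2\eps$, and $K_\eps\times C_\eps$ is compact in the product topology. Classical Prohorov then yields a subsequence $\mu_{n_k}$ converging weakly on $\Om\times\Xx$ to some $\lam^\star\in\Pp(\Om\times\Xx)$. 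Testing against functions of the form $(\om,x)\mapsto g(\om)$ with $g\in\Cc_b(\Om)$ shows that the $\Om$-marginal of $\lam^\star$ is $\P$, so by the one-to-one correspondence between $\Pp_\P(\Om\times\Xx)$ and $\Pp_\Om(\Xx)$ recalled in the excerpt, $\lam^\star=\lam_{\mu^\star}$ for a unique random measure $\mu^\star$.

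The main obstacle is upgrading classical weak convergence to convergence against an arbitrary $f\in L^1_\P(\om,\Cc_b(\Xx))$, which need only be measurable in $\om$. I would first reduce to essentially bounded $f$ by density in $\|\cdot\|_{1,\infty}$. For such $f$, the map $\om\mapsto f(\om,\cdot)\in\Cc_b(\Xx)$ is measurable; restricting attention to $C_\eps$ (and accepting a uniform error of order $\eps$ against every $\mu_{n_k}$ and $\lam^\star$, all of which charge $C_\eps^c$ by at most $\eps$) lets us work in the separable Banach space $\Cc(C_\eps)$. Lusin's theorem for vector-valued maps then furnishes, for any $\delta>0$, a compact $L\subset\Om$ with $\P(L^c)<\delta$ on which this map is continuous, and a Tietze-type extension produces a jointly continuous bounded $\tilde f\in\Cc_b(\Om\times\Xx)$ with $\|f-\tilde f\|_{1,\infty}\le 2\delta\,\esssup_\om\sup_x|f(\om,x)|$.

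Because both $\mu_{n_k}$ and $\lam^\star$ have $\Om$-marginal $\P$, we control $\bigl|\int(f-\tilde f)\,d\mu_{n_k}\bigr|\le\|f-\tilde f\|_{1,\infty}$ uniformly in $k$, and analogously for $\lam^\star$. Combining with $\int\tilde f\,d\mu_{n_k}\to\int\tilde f\,d\lam^\star$ from classical weak convergence, and letting $\delta,\eps\to 0$, gives $\int f\,d\mu_{n_k}\to\int f\,d\lam^\star$. The hard part is precisely this last step, where one has to accommodate test functions that are only measurable in the random parameter: the combination of Lusin's theorem for maps into the separable space $\Cc(C_\eps)$ with Tietze-style extension is what bridges the gap between the classical and the random-measure weak topologies, and it is where the tightness hypothesis is used twice, both to secure the subsequential limit and to truncate the $x$-dependence to a separable setting.
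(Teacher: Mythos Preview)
The paper does not prove this theorem; it simply cites \cite{CR02}, Theorem~4.4. So there is no ``paper's own proof'' to compare against, only the referenced argument.

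Your argument is sound in outline, but it rests on a hypothesis that is not available in the generality of the statement: you assume $\Omega$ is Polish. In the appendix where the theorem is stated, $(\Omega,\mathcal{F},\mathcal{P})$ is an abstract probability space with no topology, and the tightness hypothesis only concerns the $\mathcal{X}$-factor. Your use of Ulam's theorem to produce a compact $K_\eps\subset\Omega$, of Lusin's theorem for the vector-valued map $\omega\mapsto f(\omega,\cdot)\in C(C_\eps)$, and of a Tietze--Dugundji extension all require a topology on $\Omega$ (and, for Lusin, that $\mathcal{P}$ be Radon). In the paper's concrete model $\Omega=C_0(\mathbb{R};\mathbb{R})$ is indeed Polish, so your proof covers the application at hand; but it does not establish the theorem as stated.

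The proof in \cite{CR02} avoids any topology on $\Omega$. Rather than passing through classical Prohorov on the product $\Omega\times\mathcal{X}$, it works directly with the narrow topology on $\mathcal{P}_\Omega(\mathcal{X})$: one exploits that, after restriction to a compact $C_\eps\subset\mathcal{X}$, test functions factor through the separable space $C(C_\eps)$, and a countable family of test functions of the form $g(\omega)h(x)$ with $g$ ranging over a countable generating set in $L^1(\mathcal{P})$ and $h$ over a dense set in $C(C_\eps)$ suffices to determine convergence. A diagonal extraction against this countable family, together with tightness to ensure no mass escapes, produces the limit random measure. The point is that measurability in $\omega$ is handled by $L^1$-density arguments rather than by Lusin-type continuity, which is why no topology on $\Omega$ is needed.
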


\section{Appendix B: Measuring distances between attractors: the logistic map case}\label{app:logistic_attractors}
We show the reliability of the methodology employed in Subsection \ref{subsec:distances} to quantify the distance between two attractors by applying it to the logistic map, i.e. the map $T(t) = r t (1-t)$, with $r \in [3.5, 4]$ and $\# r = 3000$. For each parameter $r$, we let the system evolves for $50000$ steps forward, and we record the position of the system in $3000$ histograms with $500$ bins. We then apply the four presented distances to all of the histograms on the first argument, and to the penultimate histogram on the right argument. The choice of the penultimate histogram is due to the fact that the dynamics  when the logistic parameter $r=4$ is somewhat simpler, being conjugated to the tent map. Thus, we picked the histogram of the more ``chaotic'' logistic map as a second argument. All the plots have a rather uniform shape, with some spikes corresponding to the stable windows in the bifurcation diagram of the logistic map, and the distance gradually decreasing as the parameter approaches the value of $4$.

\begin{figure}[!h]
\includegraphics[width=0.50\textwidth]{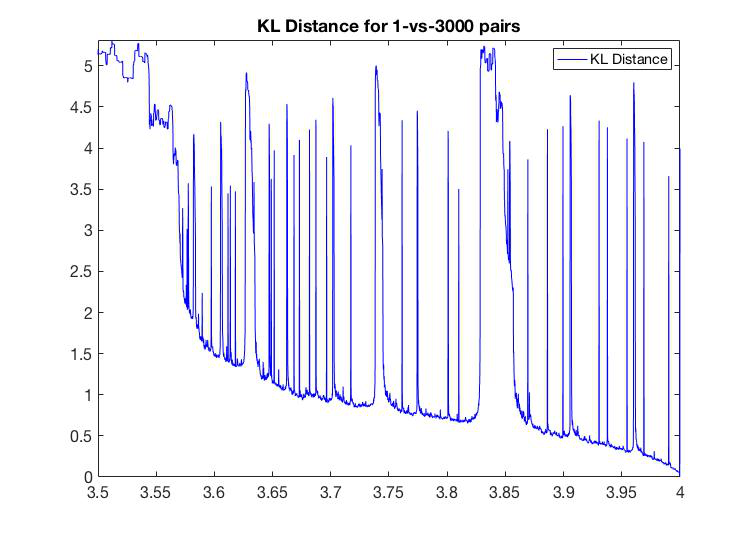}
\includegraphics[width=0.50\textwidth]{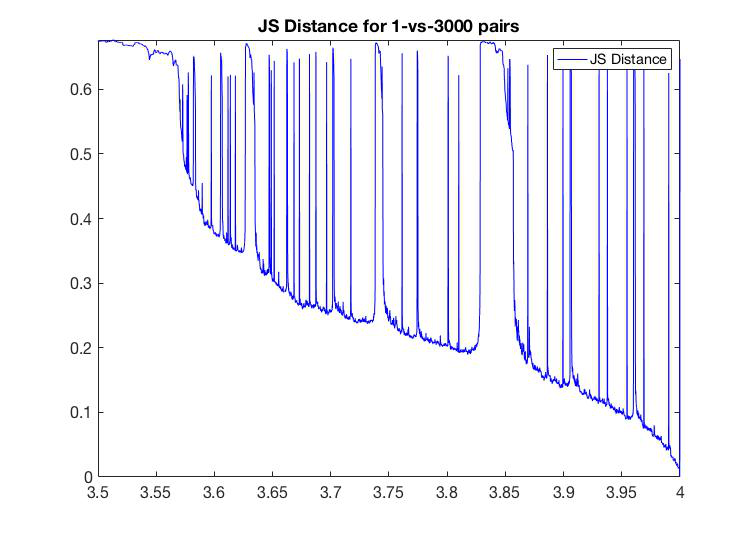}
\includegraphics[width=0.50\textwidth]{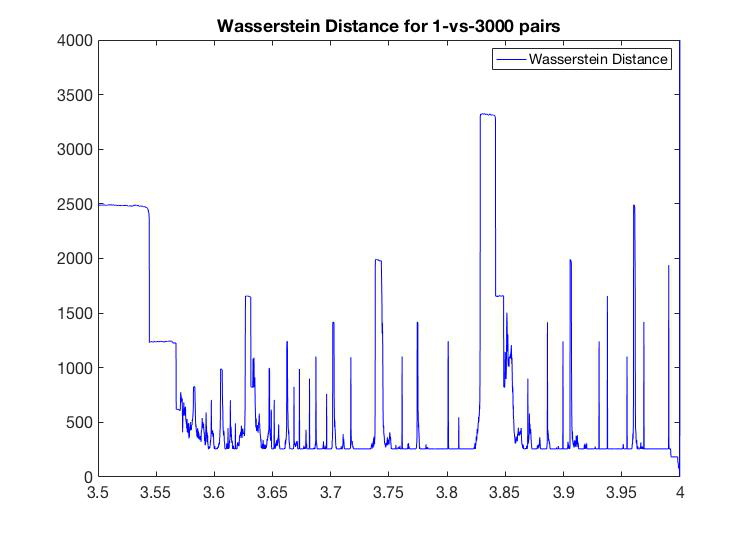}
\includegraphics[width=0.50\textwidth]{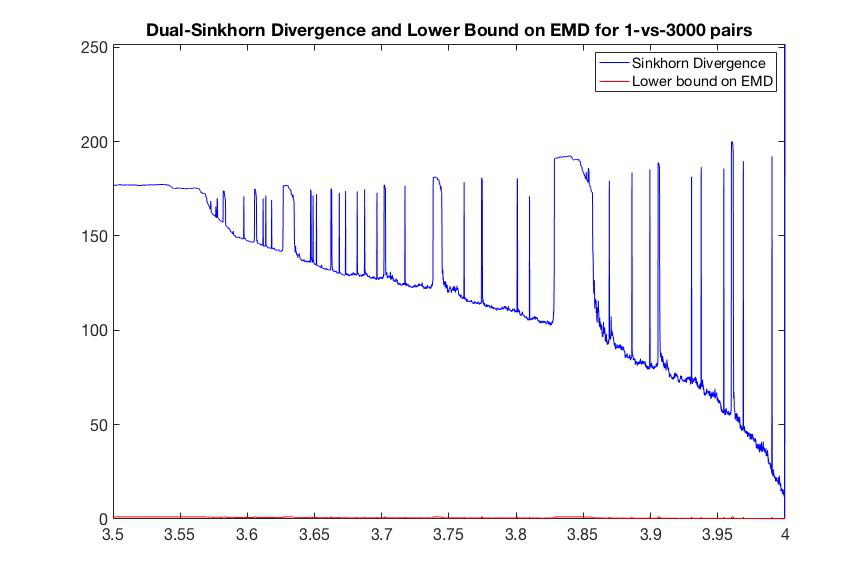}
\caption{\emph{From top to bottom, left panel:} distances between the attractors as a function of $r \in [3.5, 4]$ for the logistic map. The distances are, in order: the Kullback-Leibler divergence, the Wasserstein, the Jensen-Shannon distance and the Sinkhorn distance.}
\label{fig:distances}
\end{figure}


\newpage

\end{document}